\pdfoutput=1
\documentclass[12pt]{paper}


\def\figdir{.}

\input epsf.sty
\input labelfig.tex

\usepackage{color}
\usepackage{mathptmx}       
\usepackage{helvet}         
\usepackage{courier}        
\usepackage{type1cm}        
%
\usepackage{makeidx}         
\usepackage{graphicx}        
\usepackage{multicol}        
\usepackage[bottom]{footmisc}
\usepackage{amsmath}
\usepackage{amsfonts}
\usepackage{amssymb}
\usepackage{amsthm}
\usepackage{amsopn}
\usepackage{subcaption}
\usepackage{comment}
\newtheorem{theorem}{Theorem}
\newtheorem{corollary}[theorem]{Corollary}

\newtheorem{lemma}[theorem]{Lemma}

\newtheorem{proposition}[theorem]{Proposition}
\newtheorem{question}[theorem]{Question}
\newtheorem{remark}[theorem]{Remark}

\usepackage[top=2cm, bottom=2cm, left=2cm, right=2cm]{geometry}
\numberwithin{theorem}{section}
\numberwithin{figure}{section}
\numberwithin{equation}{section}
\DeclareMathOperator{\CR}{CR}

\DeclareMathOperator{\SLE}{SLE}
\DeclareMathOperator{\CLE}{CLE}

\DeclareMathOperator{\inrad}{inrad}

\DeclareMathOperator{\capa}{cap}
\DeclareMathOperator{\diam}{diam}

\DeclareMathOperator{\CGE}{CGE}

\usepackage[colorlinks=false,pdfborderstyle={/W 1}]{hyperref}

\def\lora{\longrightarrow}
\def\p{\partial}

\def\md{\mid}
\def\Bb#1#2{{\def\md{\bigm| }#1\bigl[#2\bigr]}}
\def\BB#1#2{{\def\md{\Bigm| }#1\Bigl[#2\Bigr]}}

\def\Pb{\Bb\P}

\def\PB{\BB\P}





\begin{document}

\title{A conformally invariant growth process of SLE excursions}
\author{G\'abor Pete and Hao Wu}


%
%
\maketitle

\abstract{We construct an aggregation process of chordal $\SLE_{\kappa}$ excursions in the unit disk, starting from the boundary, growing towards all inner points simultaneously, invariant under all conformal self-maps of the disk. We prove that this conformal growth process of excursions, abbreviated as $\CGE_\kappa$, exists if{f} $\kappa\in[0,4)$, and that it does not create additional fractalness: the Hausdorff dimension of the closure of all the $\SLE_\kappa$ arcs attached is $1+\kappa/8$ almost surely. We determine the dimension of points that are approached by $\CGE_\kappa$ at an atypical rate, and construct conformally invariant random fields on the disk based on $\CGE_\kappa$.}
\newcommand{\eps}{\epsilon}
\newcommand{\ov}{\overline}
\newcommand{\U}{\mathbb{U}}
\newcommand{\T}{\mathbb{T}}
\newcommand{\HH}{\mathbb{H}}
\newcommand{\LA}{\mathcal{A}}
\newcommand{\LC}{\mathcal{C}}
\newcommand{\LD}{\mathcal{D}}
\newcommand{\LF}{\mathcal{F}}
\newcommand{\LK}{\mathcal{K}}
\newcommand{\LE}{\mathcal{E}}
\newcommand{\LL}{\mathcal{L}}
\newcommand{\LU}{\mathcal{U}}
\newcommand{\LV}{\mathcal{V}}
\newcommand{\LZ}{\mathcal{Z}}
\newcommand{\LH}{\mathcal{H}}
\newcommand{\R}{\mathbb{R}}
\newcommand{\C}{\mathbb{C}}
\newcommand{\N}{\mathbb{N}}
\newcommand{\Z}{\mathbb{Z}}
\newcommand{\E}{\mathbb{E}}
\newcommand{\PP}{\mathbb{P}}
\renewcommand{\P}{\mathbb{P}}
\newcommand{\QQ}{\mathbb{Q}}
\newcommand{\A}{\mathbb{A}}
\newcommand{\bn}{\mathbf{n}}
\newcommand{\MR}{MR}
\newcommand{\cond}{\,|\,}
\newcommand{\bcond}{\,\big|\,}
\newcommand{\la}{\langle}
\newcommand{\ra}{\rangle}
\newcommand{\tree}{\Upsilon}

\section{Introduction}\label{s.intro}

Planar aggregation processes based on harmonic measure, usually called Laplacian growth models, have been extensively studied in the physics and mathematics literature, key examples being Diffusion Limited Aggregation and a family of models using iterated conformal maps, the Hastings-Levitov models; see \cite{witten1981diffusion,halsey2000diffusion,hastings1998laplacian,carleson2001aggregation,rohde2005some,norris2012hastings,viklund2015small}. The most interesting versions, which produce fractal growth according to simulations, are notoriously hard to study: the discrete ones do not have meaningful scaling limits, the continuum models do not have enough symmetries that would make their analysis easier. 

This motivated Itai Benjamini to suggest a model where both the building blocks and the aggregation measure are fully conformally  invariant. Firstly, there is a sigma-finite infinite measure on pairs of points on the boundary of the complex unit disk $\U$, unique up to a global constant factor, which is invariant under all conformal self-maps of the disk, the M\"obius transformations: it has density $H_{\U}(z,w)=c / |z-w|^2$, called the {\bf boundary Poisson kernel}. Secondly, once we have a pair of points $z,w \in \partial\U$, we can take a {\bf chordal $\SLE_\kappa$ arc} $\gamma$ in $\U$ between $z$ and $w$, with $\kappa\in[0,8)$. (For background on the Schramm-Loewner Evolution, see \cite{SchrammScalinglimitsLERWUST,WernerRandomPlanarcurves, LawlerConformallyInvariantProcesses}.) Then, we can take a point $z\in\U$ and a conformal uniformization map from the component of $\U\setminus\gamma$ that contains $z$ back to $\U$, normalized at $z$, and try and iterate this procedure. However, since our measure on $\partial\U\times \partial\U$ is not finite, we cannot take iid random pairs $(z_i,w_i)$ one after the other. Instead, we need to take a Poisson point process on $\partial\U\times \partial\U \times [0,\infty)$ with intensity measure $H_{\U}(z,w)\,dz\,dw\, dt$, take all the arrivals $\big\{(z_i,w_i) : i\in I_r(t)\big\}$ with time index in $[0,t)$ and arc-length larger than a small positive cutoff $r>0$, and do the above iterative procedure for these finitely many pairs of points. (See Figure~\ref{f.Arcs0} for an illustration.) Then, we let $r\to 0$, and hope that the process, using the increasing set $I_r(t)$ of arrivals, will converge to a process $(D^z_t, t \ge 0)$, the connected component of $z$ at time $t$. Moreover, using conformal invariance, we can try to define the process targeted at all points $z\in\U$ simultaneously: as long as $D^z_t=D^w_t$, the processes targeted towards $z$ and $w$ coincide, and after the disconnection time they continue independently. Our first result says that this envisioned procedure actually works, but only for $\kappa\in [0,4)$:

\begin{theorem}\label{thm::growing_process_conf_inv}
For $\kappa\in [0,4)$, there exists a growth process $(D_t, t\ge 0)=(D^z_t, t\ge 0, z\in\U)$ of $\SLE_{\kappa}$ excursions, targeted at all points, with the property that $(D_t, t\ge 0)$ and $(\varphi(D_t), t\ge 0)$ have the same law (with no time-change) for all M\"obius transformations $\varphi$ of $\U$. We will abbreviate this Conformal Growth of Excursions by $\CGE_\kappa$.
\end{theorem}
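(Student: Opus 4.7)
The plan is to reduce the existence question to a single-target-point construction, which by rotation invariance we take at $z=0$, and then assemble the multi-target process from the branching structure of the Poisson point process. The key structural fact is that the intensity $c\,d\zeta\,d\xi/|\zeta-\xi|^2$ is M\"obius invariant, so whenever we uniformize the current component $D^0_t$ back to $\U$ via some $\phi_t$ fixing $0$, the chord Poisson process with both endpoints in $\partial\U \cap \partial D^0_t$ is pushed forward to a fresh Poisson process in $\U$ of the same intensity. This produces a self-similar radial construction: a chord arrives, the $\SLE_\kappa$ arc is drawn, we keep the $0$-component, and reuniformize.

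In this radial picture $X_t := -\log\CR(0,D^0_t)$ is a non-decreasing L\'evy process (subordinator), whose L\'evy measure $\nu$ on $(0,\infty)$ is the push-forward by $\lambda(\zeta,\xi,\gamma) := -\log|f'_{\zeta,\xi,\gamma}(0)|$ of $H_\U \otimes \mathrm{law}(\SLE_\kappa)$, where $f_{\zeta,\xi,\gamma}$ uniformizes the $0$-component of $\U\setminus\gamma$ back to $\U$. For each cutoff $r>0$ the measure $\nu_r$ is finite and the construction is elementary, so the content of the theorem is the $r\to 0$ limit. L\'evy--It\^o theory produces a well-defined subordinator precisely when $\int(1\wedge a)\,\nu(da)<\infty$, which via the radial uniformization chain translates directly into existence of $D^0_t$ as a right-continuous, Carath\'eodory-continuous family of simply connected domains.

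The main step, and the expected principal obstacle, is to show this integrability holds for $\kappa\in[0,4)$. Parameterizing chords by $\alpha\in[0,2\pi)$ and $\theta\in(0,\pi]$ and using rotation invariance, the criterion becomes
\[
\int_0^{\pi} \frac{\E[\lambda(\theta)\wedge 1]}{\sin^2(\theta/2)}\,d\theta \;<\; \infty,
\]
where $\lambda(\theta)$ is the conformal-radius loss at $0$ caused by a single $\SLE_\kappa$ arc from $1$ to $e^{i\theta}$. The $\theta^{-2}$ singularity at $\theta=0$ forces $\E[\lambda(\theta)] = O(\theta^{1+\eta})$ for some $\eta>0$. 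To control $\lambda(\theta)$ I would send $1,e^{i\theta}$ to $0,\infty$ in $\HH$ by a M\"obius map $\phi$; then $\phi(0)$ sits at hyperbolic distance $\sim\log(1/\theta)$ from the imaginary axis, and $\lambda(\theta)$ can be read off the hyperbolic distance from $\phi(0)$ to the image curve $\phi(\gamma)$. A one-point proximity estimate for chordal $\SLE_\kappa$ seen from an interior point, combined with Koebe distortion to translate hyperbolic proximity into a derivative bound for $f_{\zeta,\xi,\gamma}$ at $0$, should produce a decay exponent $\beta(\kappa)$ satisfying $\beta(\kappa)>1$ exactly when $\kappa<4$; this pinpoints the critical value claimed in the statement and is the source of the $\kappa<4$ restriction.

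Once $D^0_t$ is constructed, extending to all target points is a coupling argument rather than an analytic one. For $z,w\in\U$, the processes $D^z_t$ and $D^w_t$ coincide, driven by the same Poisson realization in the common uniformized picture, up to the first time $\tau_{z,w}$ an arc disconnects them; afterwards they evolve conditionally independently, each driven by the Poisson process restricted to its own component. Iterating this branching rule produces a consistent family $(D^z_t)_{z\in\U,\,t\ge 0}$. M\"obius invariance with no time change is automatic because the Poisson intensity, the $\SLE_\kappa$ law (modulo the irrelevant curve parameterization), and the uniformizations are all M\"obius invariant, while $t$ enters only through $dt$ in the intensity. Finally, right-continuity in $t$ and Carath\'eodory continuity of $z\mapsto D^z_t$ follow from compactness of normalized conformal maps via Koebe together with right-continuity of the subordinator $X$.
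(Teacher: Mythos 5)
Your high-level strategy coincides with the paper's: pass to the radial picture at a single target point, view $X_t := -\log\CR(0;D_t^0)$ as a subordinator driven by a Poisson point process of $\SLE_\kappa$ excursions with intensity $H_\U(x,y)\,dx\,dy\otimes\mu^\#_{\U,\kappa}(x,y)$, reduce existence to the L\'evy-measure integrability $\int(1\wedge a)\,\nu(da)<\infty$, and then extend to all target points by the branching/coupling argument exploiting M\"obius invariance of the intensity. All of this matches the paper's Section~\ref{sec::growthprocess_construction} and Proposition~\ref{prop::relation_two_growing_processes}.

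There is, however, a concrete gap at the heart of the argument --- the step you yourself flag as ``the expected principal obstacle.'' You attribute the integrability to a ``one-point proximity estimate for chordal $\SLE_\kappa$ seen from an interior point, combined with Koebe distortion,'' and claim this yields a decay exponent $\beta(\kappa)>1$ exactly when $\kappa<4$. The interior one-point estimate has exponent $2-d=1-\kappa/8$, which exceeds $1$ only for $\kappa<0$, so it cannot be the source of the $\kappa=4$ threshold. What actually governs the small-$\theta$ behavior of $\E[\capa(\gamma^\theta)]$ is a \emph{boundary} proximity estimate: the point $\phi(0)$ obtained by sending $(1,e^{i\theta})\mapsto(0,\infty)$ lies at hyperbolic distance $\sim\log(1/\theta)$ from the chord, so it sits near $\partial\HH$ rather than deep in the bulk, and the probability that the chord approaches it at scale $r$ is controlled by the boundary exponent $8/\kappa-1$ (this is \cite{AlbertsKozdronIntersectionProbaSLEBoundary}). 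Combined with the fact that a set of diameter $r$ near $\partial\U$ has capacity $\asymp r^2$ seen from $0$, a dyadic sum gives $\E[\capa(\gamma^\theta)]\asymp\theta^{8/\kappa-1}$ for $\kappa>8/3$ and $\asymp\theta^2$ (or $\theta^2\log(1/\theta)$) for $\kappa\le 8/3$, and then $\int_0^\pi\E[\capa(\gamma^\theta)]/\sin^2(\theta/2)\,d\theta<\infty$ precisely when $\kappa<4$. This is the direct argument in Proposition~\ref{prop::cap_small_u}; the paper also gives a second route via the explicit hypergeometric-function martingales for the Laplace transform of the capacity (Proposition~\ref{prop::sle_capacity_laplace}), which is used elsewhere in the paper but is not strictly needed for Theorem~\ref{thm::growing_process_conf_inv}. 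In short, the skeleton of your proof is right, but the claimed source of the critical exponent is misidentified, and since this estimate is the entire analytic content of the theorem, it needs to be pinned down correctly. The remaining points --- finiteness of the cut-off intensity, Carath\'eodory convergence of the iterated maps (citing the stability result from \cite{SheffieldWernerCLE}), and the multi-target branching coupling --- are as in the paper and unobjectionable.
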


 \begin{figure}[htbp]
 \begin{center}
 \includegraphics[height=3.5 in]{\figdir/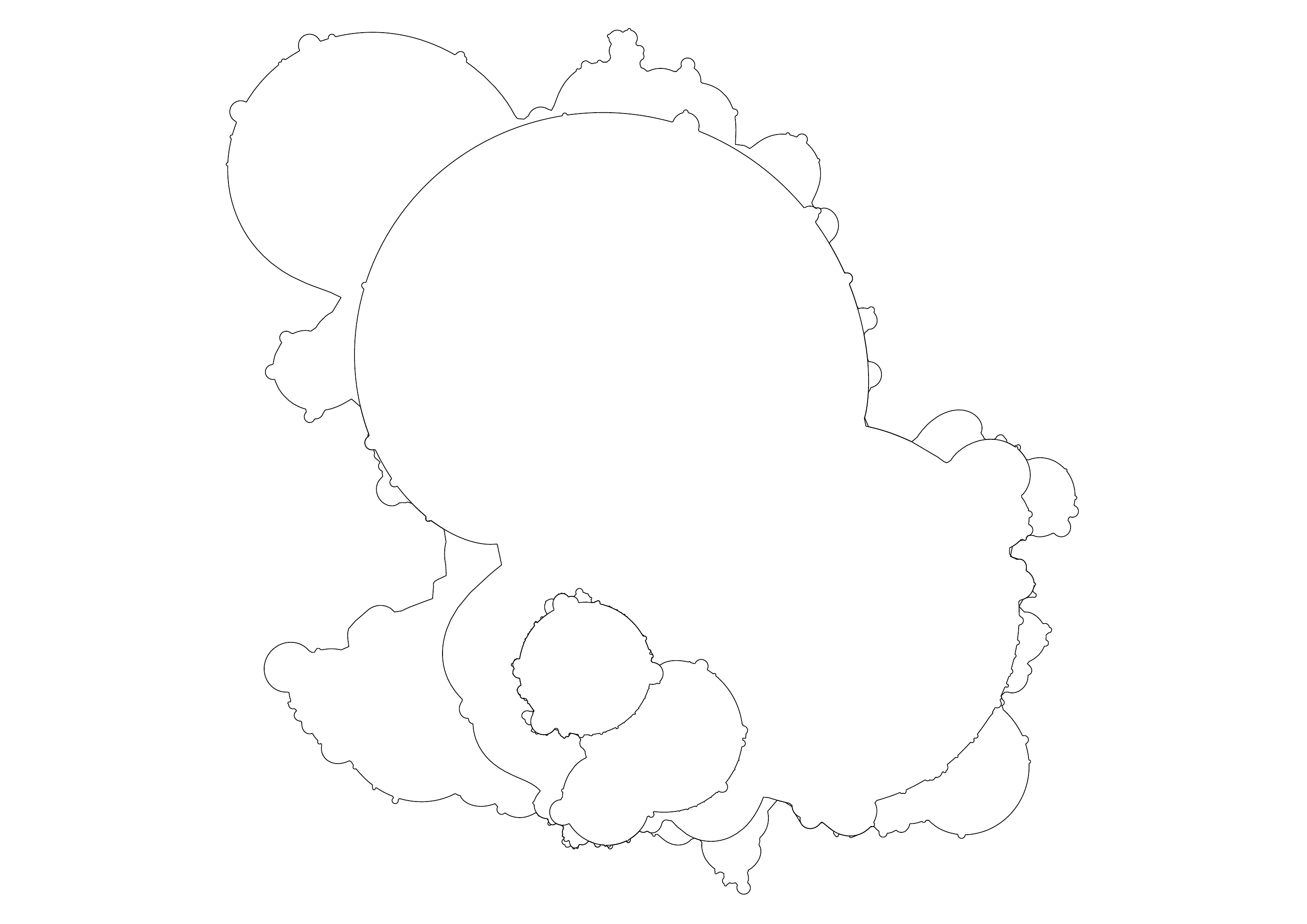}
 \end{center}
 \caption{Four stages (75, 150, 300, 500 arcs) of $\CGE_{\kappa=0}$, growing towards $\infty$ (that is, the process targeted at 0, inverted through the circle for better visibility), with some positive cutoff for the sizes of the $\SLE_0$ arcs, which are just semicircles.}
 \label{f.Arcs0}
 \end{figure}

Maybe disappointingly, $\CGE_\kappa$ does not produce considerable extra fractalness, beyond what is already inherent in the $\SLE_\kappa$ arcs, which have dimension $1+\kappa/8$ \cite{BeffaraDimension}:

\begin{theorem} \label{thm::growthprocess_dimension}
Fix $\kappa\in [0,4)$. Suppose that $(D_t^0, t\ge 0)$ is $\CGE_\kappa$ targeted at the origin. Define $\Gamma$ to be the closure of the union $\cup_{t\ge 0}\partial D_t^0$. Then, almost surely,
\[\dim(\Gamma)=1+\kappa/8.\]
\end{theorem}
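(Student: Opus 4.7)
The lower bound $\dim\Gamma \ge 1+\kappa/8$ is immediate: $\Gamma$ contains a positive-length sub-arc of the first $\SLE_\kappa$ excursion $\gamma_1$ used in the construction, and by Beffara's theorem~\cite{BeffaraDimension} such a sub-arc has Hausdorff dimension $1+\kappa/8$ almost surely.

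For the matching upper bound, the set $\Gamma$ is contained in $\partial\U\cup\overline{\bigcup_i\gamma_i}$, where $\{\gamma_i\}$ is the countable family of $\SLE_\kappa$ excursions added by the driving Poisson process. Countable stability gives $\dim\bigcup_i\gamma_i\le 1+\kappa/8$, but the closure can in principle carry an accumulation set of strictly larger dimension, and controlling these accumulation points is the crux. The plan is a dyadic decomposition in terms of the conformal radius from the target, exploiting the conformal self-similarity of Theorem~\ref{thm::growing_process_conf_inv}. Introduce the stopping times
\[
\tau_n := \inf\{t\ge 0 : \CR(0,D^0_t) \le 2^{-n}\}, \qquad A_n := \Gamma \cap \bigl(\overline{D^0_{\tau_n}}\setminus D^0_{\tau_{n+1}}\bigr).
\]
By Theorem~\ref{thm::growing_process_conf_inv}, conformally uniformizing $D^0_{\tau_n}$ onto $\U$ maps $A_n$ onto a ``one-scale'' portion $\widetilde A_n$ of an independent copy of $\CGE_\kappa$ run until its own conformal radius halves.

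Fix $\alpha>1+\kappa/8$. The main estimate to establish is the one-scale bound $\E[\mathcal H^\alpha_\infty(\widetilde A_n)] \le K_\alpha$ for a finite constant $K_\alpha$ independent of $n$. Combined with Koebe distortion control to transfer back from $\U$ to $D^0_{\tau_n}$, and with the a.s.\ Carath\'eodory shrinkage $D^0_t \to \{0\}$ (guaranteeing $A_n \subseteq B(0, C\cdot 2^{-n})$ with large probability), one obtains $\E[\mathcal H^\alpha_\infty(A_n)] \lesssim 2^{-n\alpha}$; summation over $n$ then yields $\E[\mathcal H^\alpha_\infty(\Gamma)] < \infty$, hence $\dim\Gamma \le \alpha$ almost surely. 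Letting $\alpha\downarrow 1+\kappa/8$ concludes.

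The one-scale bound is the main analytic task. The boundary Poisson kernel intensity $c|z-w|^{-2}$ produces $\sim c/r$ chords of reference length $\asymp r$ per unit time, and each such $\SLE_\kappa$ excursion has expected $\alpha$-Hausdorff content of order at most $r^\alpha$ (e.g.\ via $\mathcal H^\alpha_\infty(\gamma) \le \diam(\gamma)^\alpha$ combined with the moment bound $\E[\diam(\gamma)^\alpha] \lesssim r^\alpha$ for an $\SLE_\kappa$ arc between boundary points at distance $r$). Summing over dyadic sizes $r=2^{-k}$ produces a geometric series $\sum_k 2^{k(1-\alpha)}$, convergent since $\alpha>1+\kappa/8>1$. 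Limit points of the arc ensemble are absorbed into the cover because arc diameters tend to zero almost surely (only finitely many exceed any given threshold by the $1/r$-intensity estimate) and each such limit point sits within a constant multiple of the diameter of some attached arc. The remaining obstacles are: (i) Koebe distortion when transferring arcs from the reference disk to $D^0_{\tau_n}$, uniform on the bulk but requiring a separate argument for boundary-hugging arcs via their small native diameter on $\partial D^0_{\tau_n}$; and (ii) verifying the a.s.\ decay $\CR(0,D^0_t)\to 0$, which should follow from a Loewner-type capacity identity built into the construction of $\CGE_\kappa$ together with a standard renewal argument.
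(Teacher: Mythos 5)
Your lower bound is stated as ``immediate'' but glosses over the central technical issue: $\Gamma$ does \emph{not} contain any $\SLE_\kappa$ excursion from the driving Poisson process; it contains only their \emph{conformal images} under the accumulated uniformizing maps $\Psi_{t}^{-1}$. (There is not even a literal ``first'' excursion, since the intensity $\mu_{\U,\kappa}$ is infinite and infinitely many small arcs are attached before any positive time, so $\Psi_t$ is never the identity for $t>0$.) The paper's lower-bound argument is precisely to control this distortion: fix a time $t$ when $\gamma_t$ is nonempty, let $\phi$ be the conformal map from $\U$ onto $D_t^0$, note that $\phi$ is bi-Lipschitz on each $\overline{B(0,r)}$ with $r<1$, so $\dim\bigl(\phi(\gamma_t\cap\overline{B(0,r)})\bigr)=\dim\bigl(\gamma_t\cap\overline{B(0,r)}\bigr)$, and use countable stability (exhausting $r\uparrow 1$) together with simplicity of $\gamma_t$ for $\kappa<4$ to conclude $\dim\phi(\gamma_t)=1+\kappa/8$. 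You should state and use this distortion step rather than calling the bound immediate.

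Your upper bound has a more serious gap: $\E[\mathcal H^\alpha_\infty(\Gamma)]<\infty$ does \emph{not} imply $\dim\Gamma\le\alpha$. The Hausdorff \emph{content} $\mathcal H^\alpha_\infty$ of any bounded set is automatically finite (cover by a single ball), so the conclusion you draw from the summed one-scale estimate is vacuous. To obtain a dimension bound you need either $\mathcal H^\alpha(\Gamma)<\infty$ (Hausdorff \emph{measure}, i.e.\ with cover sizes $\to 0$) or $\mathcal H^\alpha_\infty(\Gamma)=0$; your dyadic counting sketch, using $\mathcal H^\alpha_\infty(\gamma)\le\diam(\gamma)^\alpha$, only produces a finite-content estimate. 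Moreover, even if you upgraded to a measure estimate, the treatment of accumulation points (``each such limit point sits within a constant multiple of the diameter of some attached arc'') is not justified and is in fact the whole difficulty. As a side remark, the moment bound $\E[\diam(\gamma)^\alpha]\lesssim r^\alpha$ fails for $\kappa$ close to $4$: by the Alberts--Kozdron estimate $\P[\diam(\gamma^\theta)>r]\asymp(\theta/r)^{8/\kappa-1}$, one gets $\E[\diam^\alpha]\asymp\theta^{\min(\alpha,\,8/\kappa-1)}$, and $8/\kappa-1<1+\kappa/8<\alpha$ once $\kappa>8(\sqrt 2-1)\approx 3.31$; the series still converges because $8/\kappa-1>1$ for $\kappa<4$, but the exponent you wrote is incorrect.

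The paper sidesteps all of this with a first-moment covering bound that is cleaner than a scale-by-scale decomposition. Using transience (the diameter of $D^0_t$ tends to $0$ a.s.), one writes $\Gamma=\bigcup_n\Gamma_n$ where $\Gamma_n=\{z\in\Gamma: T(0,z)\le n\}$. For a ball $U$ of radius $e^{-m}$ with center $z$, the event $U\cap\Gamma_n\ne\emptyset$ forces $\inrad(z;D^z_n)\le e^{-m}$, and then Koebe plus Markov's inequality applied to the explicit moment $\E[\CR(z;D^z_n)^{-\lambda}]=\exp(n\Lambda_\kappa(\lambda))$, finite for $\lambda<1-\kappa/8$, yields
\[
\P\bigl[U\cap\Gamma_n\ne\emptyset\bigr]\le 4^\lambda e^{-m\lambda}\exp\bigl(n\Lambda_\kappa(\lambda)\bigr).
\]
Summing over an $e^{-m}$-net gives $\E[\mathcal H_s(\Gamma_n)]\lesssim e^{m(2-s-\lambda)}\to 0$ as $m\to\infty$ for every $s>2-\lambda$, i.e.\ $\dim\Gamma_n\le 2-\lambda$ a.s.; letting $\lambda\uparrow 1-\kappa/8$ and using countable stability finishes the proof. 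This approach avoids both of your gaps at once: it bounds the probability that a small ball meets $\Gamma_n$ directly, so the accumulation set is handled automatically, and it produces a genuine Hausdorff-measure (not content) bound.
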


Consider now the conformal radius of $D^z_t$ seen from $z$, denoted by $\CR(z;D^z_t)$. We can derive the asymptotic decay of the conformal radius. For $\lambda\in\R$, define the Laplace exponent 
\[\Lambda_{\kappa}(\lambda)=\log \E\left[\CR(z; D^z_1)^{-\lambda}\right].\]
As we will see, $\Lambda_{\kappa}(\lambda)$ is finite when $\lambda<1-\kappa/8$, and we have almost surely that 
\begin{equation}\label{eqn::cr_decay_normal}
\lim_{t\to\infty}\frac{-\log\CR(z; D^z_t)}{t}=\Lambda_{\kappa}'(0)\in (0,\infty)\,.
\end{equation}
From (\ref{eqn::cr_decay_normal}), we know that typically the conformal radius $\CR(z; D_t^z)$ decays like $\exp(-t\Lambda_{\kappa}'(0))$. We are also interested in those points $z$ where $\CR(z; D_t^z)$ decays in an abnormal way. Define, for $\alpha\ge 0$, the random set
\begin{equation}\label{eqn::Theta_alpha}
\Theta(\alpha)=\left\{z\in \U: \lim_{t\to \infty}\frac{-\log\CR(z; D_t^z)}{t}=\alpha\right\}.
\end{equation}
Clearly, when $\alpha\neq \Lambda'_{\kappa}(0)$, the points in the set $\Theta(\alpha)$ have an abnormal decaying rate of $\CR(z; D_t^z)$. The Hausdorff dimension of $\Theta(\alpha)$ can be estimated through \textit{Fenchel-Legendre transform} of $\Lambda_{\kappa}$.
The Fenchel-Legendre transform of $\Lambda_{\kappa}$ is defined by, for $\alpha\in\R$,
\[\Lambda_{\kappa}^*(\alpha)=\sup_{\lambda\in\R}\left(\lambda\alpha-\Lambda_{\kappa}(\lambda)\right).\] 

\begin{figure}[htbp]
\SetLabels
(.38*0.85){\color{blue}$\Lambda_\kappa(\lambda)$}\\
(.88*0.85){\color{blue}$\Lambda^*_\kappa(\alpha)$}\\
(.6*0.97){\color{red}$2\alpha$}\\
(.7*0.97){\color{blue}$(1-\kappa/8)\alpha$}\\
\endSetLabels
\centerline{
\AffixLabels{
\includegraphics[width=0.5\textwidth]{\figdir/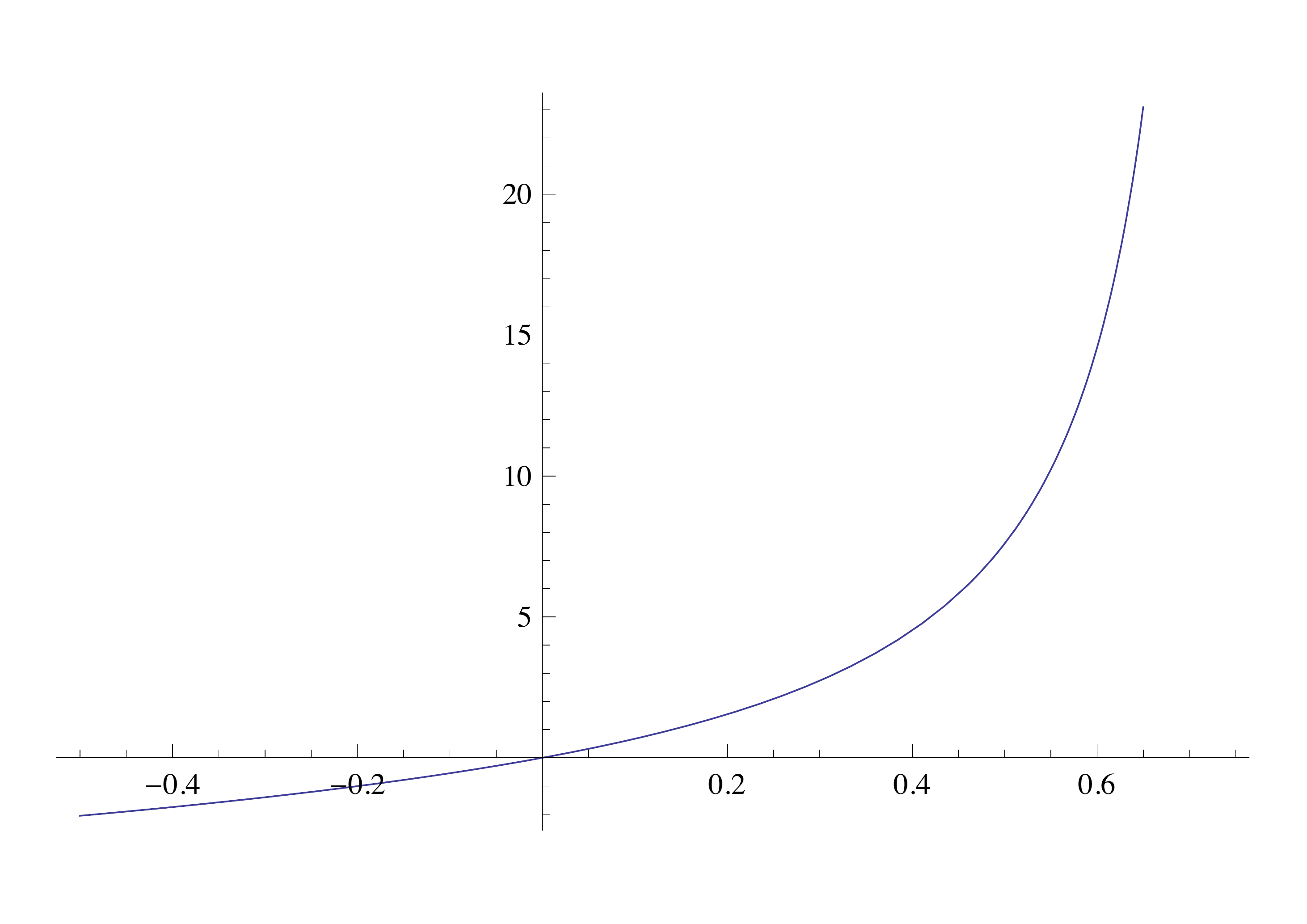}
\hskip 0.2 in
\includegraphics[width=0.5\textwidth]{\figdir/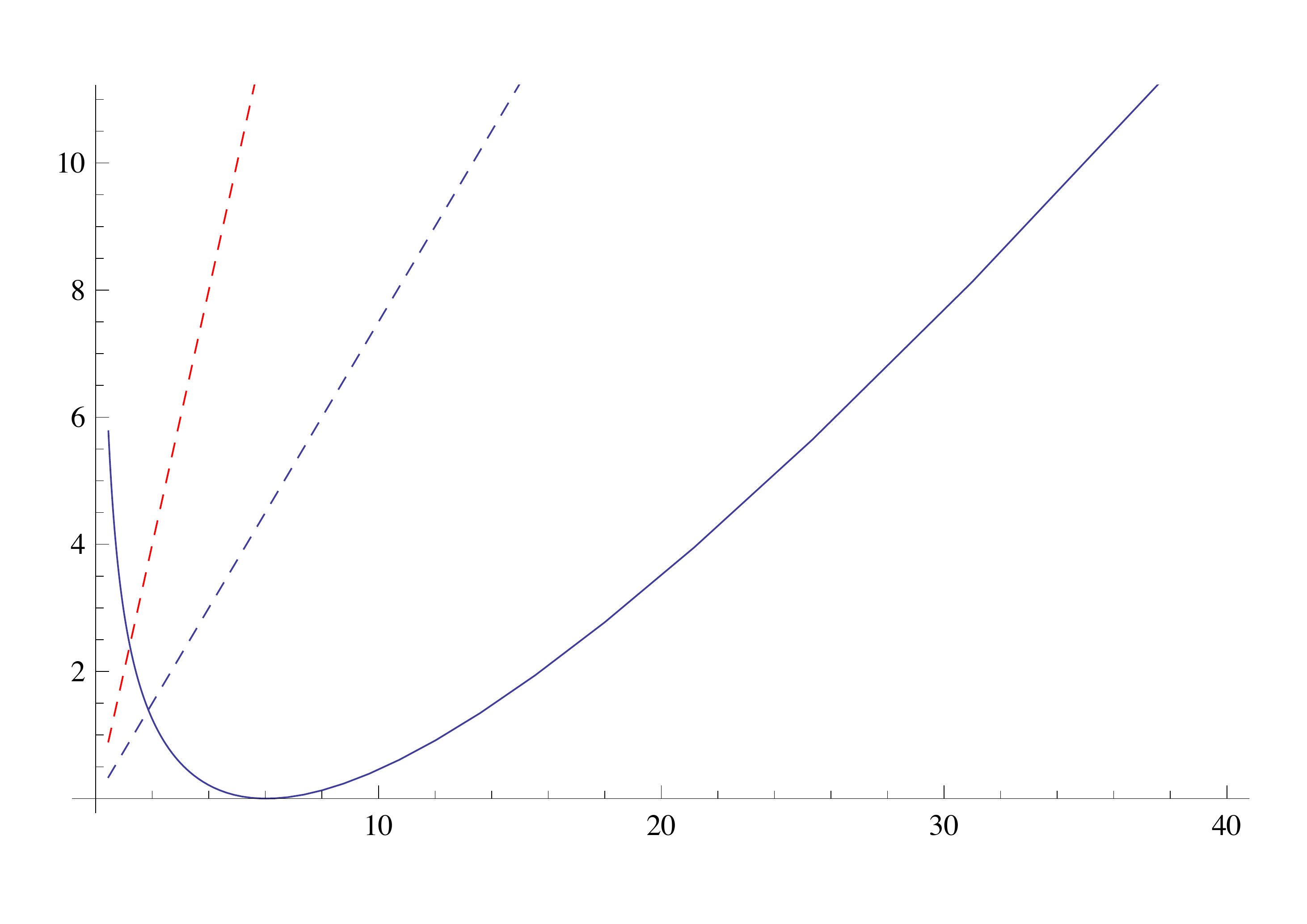}
}
}
\caption{With $\kappa=2$, numerical approximations of $\Lambda_\kappa(\lambda)$, blowing up at $\lambda=1-\kappa/8$, and its Fenchel-Legendre transform $\Lambda_\kappa^*(\lambda)$, with an asymptotic slope $1-\kappa/8$.}
\label{f.Lambda}
\end{figure}

\begin{theorem}\label{thm::abnormal_decay_upper}
Define 
\begin{equation}\label{eqn::alpha_min}
\alpha_{min}=\sup\{\alpha>0: 2\alpha-\Lambda_{\kappa}^*(\alpha)\le 0\}.
\end{equation}
We have almost surely,
\[\begin{cases}
\dim(\Theta(\alpha))\le 2-\Lambda^*_{\kappa}(\alpha)/\alpha, &  \alpha\ge \alpha_{min};\\
\Theta(\alpha)=\emptyset,& \alpha<\alpha_{min}.
\end{cases}\]
\end{theorem}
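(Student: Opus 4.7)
\medskip
\noindent\textbf{Proof proposal.}
The plan is to exploit the subordinator structure of $X^z_t := -\log\CR(z;D^z_t)$ forced by the M\"obius invariance of $\CGE_\kappa$, and combine it with a first-moment/covering argument in the spirit of standard multifractal analysis.

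First I would use Theorem~\ref{thm::growing_process_conf_inv}. Let $\varphi^z_t : D^z_t \to \U$ be the conformal map with $\varphi^z_t(z)=0$ and $(\varphi^z_t)'(z)>0$. By the M\"obius invariance of $\CGE_\kappa$ and the fact that, conditionally on $D^z_t$, the future evolution pulled back by $\varphi^z_t$ is an independent copy of $\CGE_\kappa$ targeted at $0$, the increments $X^z_{t+s}-X^z_t$ are i.i.d.\ copies of $X^z_s$ independent of $\mathcal{F}_t$. Hence $X^z_t$ is a nondecreasing L\'evy process with
\[
\E[\exp(\lambda X^z_t)] = \exp(t\,\Lambda_\kappa(\lambda)),
\]
finite for $\lambda<1-\kappa/8$. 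A Chernoff/Cram\'er bound then gives, for any $\delta>0$,
\[
\P\bigl[X^z_t/t \in [\alpha-\delta,\alpha+\delta]\bigr] \le \exp\bigl(-t(\Lambda^*_\kappa(\alpha)-\eta(\delta))\bigr),
\]
with $\eta(\delta)\to 0$ as $\delta\to 0$, uniformly in $z\in\U$.

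Next I set up the covering. Fix $\delta>0$ and $n\in\N$ large, let $\epsilon_n := e^{-(\alpha+\delta)n}$, and take a lattice $\LL_n\subset\U$ of spacing $\epsilon_n$. If $z\in\Theta(\alpha)$, then for all sufficiently large $n$ (depending on $z$), $\CR(z;D^z_n)\in[e^{-(\alpha+\delta)n},\,e^{-(\alpha-\delta)n}]$. Koebe's $1/4$ theorem applied to $\varphi^z_n$ yields $\dist(z,\partial D^z_n)\ge \tfrac14\CR(z;D^z_n)\ge \tfrac14 e^{-(\alpha+\delta)n}$, so for $n$ large some $z_i\in\LL_n$ lies in $D^z_n$ within distance $\sqrt 2\,\epsilon_n$ of $z$. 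Then $D^{z_i}_n=D^z_n$, and Koebe's distortion theorem gives $\CR(z_i;D^{z_i}_n) \asymp \CR(z;D^z_n)$, so $X^{z_i}_n/n\in[\alpha-2\delta,\alpha+2\delta]$ for $n$ large. Writing $\LL_n^{\rm good}$ for the lattice points in this window, we obtain
\[
\Theta(\alpha) \subset \bigcup_{z_i\in\LL_n^{\rm good}} B(z_i, 2\epsilon_n) \qquad\text{for all $n$ large.}
\]

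Then I carry out the moment count. Since $|\LL_n|\lesssim \epsilon_n^{-2}=e^{2(\alpha+\delta)n}$, the Cram\'er bound yields
\[
\E|\LL_n^{\rm good}| \le \exp\bigl(n[2(\alpha+\delta)-\Lambda^*_\kappa(\alpha)+\eta(\delta)]\bigr).
\]
The $s$-dimensional Hausdorff content of the cover has expectation at most a constant times $\epsilon_n^{s}\,\E|\LL_n^{\rm good}| = \exp\bigl(n[(2-s)(\alpha+\delta)-\Lambda^*_\kappa(\alpha)+\eta(\delta)]\bigr)$, which tends to $0$ as $n\to\infty$ whenever $s>\tfrac{2(\alpha+\delta)-\Lambda^*_\kappa(\alpha)+\eta(\delta)}{\alpha+\delta}$. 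Letting $\delta\to 0$ and applying Fatou along $n\to\infty$ gives $\dim(\Theta(\alpha))\le 2-\Lambda^*_\kappa(\alpha)/\alpha$ a.s. For $\alpha<\alpha_{\min}$, the definition of $\alpha_{\min}$ together with strict convexity of $\Lambda^*_\kappa$ implies $2\alpha-\Lambda^*_\kappa(\alpha)<0$, so $\E|\LL_n^{\rm good}|$ decays exponentially and Borel--Cantelli along $n\in\N$ forces $\LL_n^{\rm good}=\emptyset$ eventually, whence $\Theta(\alpha)=\emptyset$ a.s.

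The main obstacle I expect is twofold: (i) verifying that $\Lambda_\kappa(\lambda)<\infty$ precisely on $\lambda<1-\kappa/8$ and the resulting Cram\'er estimate are uniform in $z$ --- this should follow from the $\SLE_\kappa$ moment bounds used to prove (\ref{eqn::cr_decay_normal}); and (ii) ensuring the Koebe-based transfer from $z\in\Theta(\alpha)$ to a nearby lattice point is uniform, which requires a small amount of care for $z$ near $\partial\U$, handled by exhausting $\U$ by $\{|z|\le 1-2^{-k}\}$ and treating the boundary tail separately using that $\CR(z;D^z_0)=1-|z|^2$ contributes only a bounded shift to $X^z_t$.
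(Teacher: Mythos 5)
Your overall strategy is the same as the paper's: parametrize the process by the subordinator $X^z_t=-\log\CR(z;D^z_t)$, invoke Cram\'er's theorem for the large-deviation upper bound on $\P[X^z_n/n\approx\alpha]$, cover $\Theta(\alpha)$ with an exponentially fine lattice, transfer the $\CR$ decay rate from an arbitrary $z$ to the nearest lattice point via Koebe, and close with a first-moment Hausdorff content estimate, plus the observation that for $\alpha<\alpha_{\min}$ the content already vanishes at dimension $0$, forcing $\Theta(\alpha)=\emptyset$. All of that matches the paper.

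There is, however, a genuine quantitative gap in the Koebe transfer step as you wrote it. You put the lattice at spacing $\epsilon_n=e^{-(\alpha+\delta)n}$, so the nearest lattice point $z_i$ satisfies $|z-z_i|\le\tfrac{\sqrt2}{2}\epsilon_n$. But from $z\in\Theta(\alpha)$ you only extracted $\CR(z;D^z_n)\ge e^{-(\alpha+\delta)n}=\epsilon_n$, and Koebe one-quarter gives $\inrad(z;D^z_n)\ge\tfrac14\epsilon_n$. Since $\tfrac14<\tfrac{\sqrt2}{2}$, there is no guarantee that $z_i\in D^z_n$, so you cannot conclude $D^{z_i}_n=D^z_n$ nor apply the distortion estimate. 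The paper avoids this by inserting a strict hierarchy of scales $\alpha^-<\tilde\alpha^-<\alpha<\tilde\alpha^+<\alpha^+<\beta$: the rate in $z\in\Theta(\alpha)$ is controlled with the inner pair $\tilde\alpha^\pm$, the lattice lives at the strictly coarser exponent $\beta>\tilde\alpha^+$ (so $e^{-n\beta}$ is much smaller than $\inrad(z;D^z_n)$ for large $n$), and the lattice points are only required to land in the slightly wider window $[\alpha^-,\alpha^+]$; the slack $\tilde\alpha^+<\alpha^+<\beta$ absorbs the Koebe factors and the lattice displacement. Your $\delta\to0$ limit at the end plays the role of collapsing this hierarchy, but you need the hierarchy in place for each fixed $n$ before passing to the limit. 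The fix is mechanical — take the lattice spacing $e^{-\beta n}$ with $\beta>\alpha+\delta$ and enlarge the admissible $\CR$-window for lattice points — and is exactly how the paper's proof is arranged.
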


The $\CGE_\kappa$ process $(D_t, t\ge 0)$ targeted at all points naturally yields a fragmentation process of the unit disk, raising interesting questions. First of all, for any $z,w\in\U$, we can define $T(z,w)$ to be the first time $t$ such that $z,w$ are not in the same connected component of $D_t$. We call $T(z,w)$ the disconnection time, for which we have the following estimate:

\begin{theorem}\label{thm::disconnection_green}
Fix $\kappa\in [0,4)$ and let $\Lambda_{\kappa}(\lambda)$ be the Laplace exponent defined in (\ref{eqn::accumulated_capacity_laplace_exponent}). Let $z,w\in\U$ be distinct and $T(z,w)$ be the disconnection time of $\CGE_\kappa$ targeted at all points. Then there exists a constant $C\in (0,\infty)$ (only depending on $\kappa$) such that
\[\left|\E[T(z,w)]-G_{\U}(z,w)/\Lambda'_{\kappa}(0)\right|\le C,\] 
where $G_{\U}$ is Green's function of the unit disc.
\end{theorem}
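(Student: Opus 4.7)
The plan is to apply optional stopping to the subordinator $L_t:=-\log\CR(0;D_t^0)$ associated with the growth at the origin, combined with Koebe-type distortion bounds. By the M\"obius invariance of $\CGE_\kappa$ (Theorem~\ref{thm::growing_process_conf_inv}) I may assume $w=0$, so $G_\U(z,0)=-\log|z|$; by rotation invariance $\E[T(z,w)]$ depends only on $|z|$. Let $\psi_t:D_t^0\to\U$ be normalised by $\psi_t(0)=0$, $\psi_t'(0)>0$, set $Z_t:=\psi_t(z)$, and write $T:=T(z,0)=\inf\{t\ge 0:|Z_t|=1\}$. Conformal invariance and the Markov property of $\CGE_\kappa$ identify $L_t=\log\psi_t'(0)$ as a L\'evy subordinator with $\E[e^{\lambda L_t}]=e^{t\Lambda_\kappa(\lambda)}$; hence $M_t:=L_t-t\Lambda_\kappa'(0)$ is a mean-zero martingale, and since $\Lambda_\kappa$ is real analytic on $(-\infty,1-\kappa/8)\ni 0$ the mean overshoot $\sup_{a\ge 0}(\E[L_{\tau_a}]-a)$ is finite, where $\tau_a:=\inf\{t:L_t\ge a\}$. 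Given an a priori bound $\E[T]<\infty$ (addressed below), optional stopping at $T$ yields $\Lambda_\kappa'(0)\E[T]=\E[L_T]$, and the task reduces to showing $\E[L_T]=-\log|z|+O(1)$.

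For the lower bound: at time $T$ the disconnecting arc $\gamma_T\subset\partial D_T^0$ separates $0$ from $z$ in $D_{T^-}^0$, so the segment $[0,z]$ must meet $\partial D_T^0$ at a point at distance $\le|z|$ from $0$; Koebe's $\frac14$-theorem then yields $\CR(0;D_T^0)\le 4|z|$, i.e.\ $L_T\ge-\log|z|-\log 4$. For the upper bound, with $a:=-\log|z|$ and $\tau:=\tau_a$ decompose
\[
\E[L_T]\;=\;\E[L_{T\wedge\tau}]\;+\;\E\bigl[(L_T-L_\tau)\mathbf{1}_{T\ge\tau}\bigr].
\]
Optional stopping at $T\wedge\tau$ together with the overshoot bound give $\E[L_{T\wedge\tau}]\le\E[L_\tau]\le a+O(1)$. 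On $\{T\ge\tau\}$, $\CR(0;D_\tau^0)\le|z|$, and Koebe's distortion theorem applied to $\psi_\tau^{-1}:\U\to D_\tau^0$ (whose derivative at $0$ equals $\CR(0;D_\tau^0)$) gives
\[
\frac{|Z_\tau|}{(1-|Z_\tau|)^2}\;\ge\;\frac{|z|}{\CR(0;D_\tau^0)}\;\ge\;1,\qquad\text{so}\qquad|Z_\tau|\;\ge\;c_0:=\tfrac{3-\sqrt{5}}{2}.
\]
By the strong Markov property and conformal invariance of $\CGE_\kappa$ at $\tau$, on $\{T\ge\tau\}$ the conditional law of $T-\tau$ given $\LF_\tau$ equals that of $T(Z_\tau,0)$ for an independent fresh $\CGE_\kappa$ in $\U$, and $L_T-L_\tau$ is the corresponding fresh subordinator stopped at that time. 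A fresh application of optional stopping therefore bounds the second term by $\Lambda_\kappa'(0)\cdot C_0$, where $C_0:=\sup_{|z'|\ge c_0}\E[T(z',0)]$.

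Everything reduces to the uniform estimate $C_0<\infty$, which I expect to be the main technical step. The plan is to show that for any $z'$ with $|z'|\ge c_0$ the Poisson point process of $\SLE_\kappa$ arcs disconnects $z'$ from $0$ at a rate
\[
\lambda(z')\;=\;\int_{\partial\U\times\partial\U}H_\U(u,v)\,\P\bigl[\SLE_\kappa(u,v)\ \text{separates}\ z'\ \text{from}\ 0\bigr]\,du\,dv\ \ge\ \lambda_0\ >\ 0,
\]
with $\lambda_0$ independent of $z'$. This will follow because $H_\U$ puts positive mass on macroscopic boundary pairs, because for any such pair the $\SLE_\kappa$ arc disconnects $z'$ from $0$ with positive probability (this positivity being continuous in $z'$), and because rotation invariance plus compactness of $\{c_0\le|z'|\le 1-\varepsilon\}$ with the easy improvement as $|z'|\to 1$ yield uniformity. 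Since $|\psi_t(z')|\ge|z'|$ by the Schwarz lemma and conformal invariance preserves the disconnection-arc rate, the rate of disconnecting arrivals within the evolving $\CGE_\kappa$ remains $\ge\lambda_0$, so $T(z',0)$ is stochastically dominated by $\mathrm{Exp}(\lambda_0)$ and $C_0\le 1/\lambda_0<\infty$. The same argument with $\lambda(z)$ in place of $\lambda_0$ gives $\E[T(z,0)]<\infty$ for every $z\ne 0$, supplying the a priori finiteness needed for the optional-stopping step. The main obstacle will be the quantitative continuity and positivity of $\lambda(z')$, i.e.\ understanding how the $\SLE_\kappa$ law depends on its endpoints.
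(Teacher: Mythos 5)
Your martingale and optional-stopping skeleton matches the paper's closely: you reduce to $w=0$, work with the subordinator $L_t=-\log\CR(0;D^0_t)$ and the linear martingale $L_t-t\Lambda'_\kappa(0)$ (the paper equivalently uses the exponential martingale $e^{\lambda L_t-t\Lambda_\kappa(\lambda)}$ and differentiates at $\lambda=0$), handle the overshoot via Proposition~\ref{prop::overshoot}, and you even recover the same constant $c_0=(3-\sqrt5)/2$ from the Growth Theorem at $\tau_a$. The genuine divergence is in the key uniform estimate $C_0=\sup_{|z'|\ge c_0}\E[T(z',0)]<\infty$. The paper proves this indirectly: Lemma~\ref{lem::key_lemma} shows $\P[D^0_{t_0}\subset B(0,r_0)]\ge p_0>0$, Lemmas~\ref{lem::exp_decay_aux1}--\ref{lem::exp_decay_aux2} iterate this to get an exponential tail for $\P[D^0_t\not\subset B(0,r)]$, and that is what is invoked inside the proof of Theorem~\ref{thm::disconnection_green}. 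You instead propose a direct argument: a uniform lower bound $\lambda_0>0$ on the intensity $\lambda(z')=\int H_\U(u,v)\,\P[\SLE_\kappa(u,v)\text{ separates }z'\text{ from }0]\,du\,dv$ of ``disconnecting'' arcs, combined with $|\psi_t(z')|\ge|z'|$ from Schwarz, to stochastically dominate $T(z',0)$ by $\mathrm{Exp}(\lambda_0)$. This is a different route and, if carried out, would be somewhat shorter and more transparent than the paper's iteration. It does buy you a cleaner picture (the exponential tail drops out in one step, and you get $\lambda(z')\to\infty$ as $|z'|\to 1$ for free), at the cost of having to establish the quantitative positivity and continuity of $\lambda(z')$ on $\{|z'|\ge c_0\}$; you correctly flag this as the crux. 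That piece is not harder than the paper's Lemma~\ref{lem::key_lemma} -- the needed ingredients are the conformal invariance of $\mu_{\U,\kappa}$, rotation invariance in $z'$, positivity of the separation probability for macroscopic endpoint pairs, and the bound $\mu_{\U,\kappa}[E(\eps)]\lesssim\eps^{8/\kappa-2}$ already used in the paper -- but it is not yet written out, so the proposal is a correct alternative blueprint rather than a complete proof. One small slip: you invoke ``optional stopping at $T\wedge\tau$'' to get $\E[L_{T\wedge\tau}]\le\E[L_\tau]$, but this is just monotonicity of $t\mapsto L_t$; optional stopping is only needed for the identity $\Lambda'_\kappa(0)\E[T]=\E[L_T]$, which in turn needs the a priori $\E[T]<\infty$ that your $C_0<\infty$ supplies, so the logical ordering must put the $C_0$ argument first, as you in fact indicate.
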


Based on the fragmentation process, we can also construct random fields on the unit disk:

\begin{theorem}\label{thm::distribution_cvg}
Fix $\kappa\in [0,4)$ and $\delta>0$. Suppose that $\nu$ is a $\sigma$-finite measure supported in $[0,\infty)$ with finite mean and unit second moment:
\[\bar{m}:=\int x\nu[dx]<\infty,\quad \int x^2 \nu[dx]=1.\]
Suppose that $(D_t, t\ge 0)$ is $\CGE_\kappa$ targeted at all points, generated from $\Gamma$, the collection of $\SLE_{\kappa}$ excursions. For each $z\in\U$, denote by $D^z_t$ the connected component of $D_t$ that contains $z$. Given $\Gamma$, let $(\sigma_{\gamma})_{\gamma\in\Gamma}$ be i.i.d. weights sampled from $\nu$. Define, for each $z\in\U$, $t\ge 0$,
\[h_t(z)=\sum_{\gamma\in\Gamma}\sigma_{\gamma}1_{\{\gamma\text{ contributes to }D_t^z\}}-t\bar{m}.\] Then we have the following:
\begin{enumerate}
\item [(1)] There exists an $H^{-2-\delta}_{loc}(\U)$-valued random variable $h$ such that $h_t\to h$ as $t\to\infty$ almost surely in $H^{-2-\delta}_{loc}(\U)$. Moreover, for any $f,g\in C_c^{\infty}(\U)$, the covariance between $\langle h, f\rangle$ and $\langle h, g\rangle$ is given by 
\[\E[\langle h, f\rangle \langle h, g\rangle]=\iint_{\U\times\U}f(z)g(w)\E[T(z,w)]dzdw,\]
where $T(z,w)$ is the disconnection time of $\CGE_\kappa$.
\item [(2)] The limiting distribution $h$ is almost surely determined by $\Gamma$ and $(\sigma_{\gamma})_{\gamma\in\Gamma}$.
\item [(3)] The limiting distribution is conformal invariant: let $\varphi$ be any M\"obius transformation of $\U$, and let $\tilde{h}$ be the limiting distribution in $H^{-2-\delta}_{loc}(\U)$ associated with $\tilde{\Gamma}:=(\tilde{\gamma}=\varphi(\gamma), \gamma\in\Gamma)$ and $(\sigma_{\varphi^{-1}(\tilde{\gamma})})$, then $\tilde{h}=h\circ\varphi^{-1}$ almost surely.
\end{enumerate}
\end{theorem}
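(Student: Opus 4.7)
The plan is to realize $h_t$ as a Hilbert-space-valued $L^2$-martingale in $t$ whose quadratic variation is governed by the disconnection time $T(z,w)$, and then combine the Hilbert-space martingale convergence theorem with standard negative Sobolev estimates.

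For $f\in C_c^\infty(\U)$ set $M_t^f:=\langle h_t,f\rangle$ and $X_\gamma^f(t):=\int_\U f(z)\mathbf{1}\{\gamma\text{ contributes to }D_t^z\}\,dz$, so that $M_t^f=\sum_\gamma \sigma_\gamma X_\gamma^f(t)-t\bar m\int f$. The conformal-invariant time parametrization underlying $\CGE_\kappa$ from Theorem~\ref{thm::growing_process_conf_inv} makes the arc-contribution process $N_t(z):=\sum_\gamma \mathbf{1}\{\gamma\text{ contributes to }D_t^z\}$ a unit-rate Poisson-type process (interpreted in the compensated Lévy--Itô sense to accommodate infinite arc activity), and hence $t\mapsto M_t^f$ is a martingale with respect to the natural filtration of the arcs and weights. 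The central computation is the second moment: decomposing $\sigma_\gamma=\bar m+(\sigma_\gamma-\bar m)$ and using the independence of $(\sigma_\gamma)$ from the growth process splits
\[\E[M_t^fM_t^g]=\bar m^2\!\iint\!f(z)g(w)\cov(N_t(z),N_t(w))\,dz\,dw+(1-\bar m^2)\!\iint\!f(z)g(w)\E[N_t^{zw}]\,dz\,dw,\]
where $N_t^{zw}$ counts arcs contributing to both $D_t^z$ and $D_t^w$. The key observation is that such an arc must arrive before $T=T(z,w)$ (before $T$ the two arc-processes coincide, after $T$ they are conditionally independent given $\mathcal F_T$), so a short Poisson calculation gives $\cov(N_t(z),N_t(w))=\E[N_t^{zw}]=\E[T\wedge t]$, and the two terms collapse into
\[\E[M_t^fM_t^g]=\iint_{\U\times\U}f(z)g(w)\,\E[T(z,w)\wedge t]\,dz\,dw.\]

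By Theorem~\ref{thm::disconnection_green}, $\E[T(z,w)]\le G_\U(z,w)/\Lambda'_\kappa(0)+C$, which is locally integrable on $\U\times\U$ and defines a bounded operator on $L^2_{loc}(\U)$. For any compact $K\subset\U$ with orthonormal basis $(e_n)$ of $H^{2+\delta}_0(K)$, the Sobolev embedding $H^{2+\delta}_0(K)\hookrightarrow L^2(K)$ is Hilbert--Schmidt (since $2+\delta>\dim(\U)/2=1$), which yields
\[\sup_t\E\bigl[\|h_t\|_{H^{-2-\delta}(K)}^2\bigr]=\sup_t\sum_n\E[(M_t^{e_n})^2]\le\sup_t\sum_n\iint e_n(z)e_n(w)\E[T\wedge t]\,dz\,dw<\infty.\]
Thus $t\mapsto h_t|_K$ is an $H^{-2-\delta}(K)$-valued $L^2$-bounded martingale and, by the Hilbert-space martingale convergence theorem, converges almost surely and in $L^2$ to a limit; diagonalising over an exhaustion $K_j\uparrow\U$ produces $h\in H^{-2-\delta}_{loc}(\U)$ as in (1), and the covariance identity then follows by monotone convergence $\E[T\wedge t]\uparrow\E[T]$.

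Parts (2) and (3) are immediate from this construction. Each $h_t$ is a measurable functional of $\Gamma$ and $(\sigma_\gamma)$, hence so is the a.s.\ limit $h$. For a M\"obius map $\varphi$, Theorem~\ref{thm::growing_process_conf_inv} gives $\tilde D_t^z=\varphi(D_t^{\varphi^{-1}(z)})$ jointly in law at the same time parameter, so the pointwise identity $\tilde h_t(z)=h_t(\varphi^{-1}(z))$ passes to the distributional limit in $H^{-2-\delta}_{loc}(\U)$. The main obstacle will be rigorously justifying the martingale and rate-one Poisson structure of the arc-contribution process for a fixed target $z$ from the $r\downarrow 0$ limit of the $\CGE_\kappa$ construction, since the underlying Poisson point process on $\partial\U\times\partial\U$ has infinite intensity; one must interpret $h_t(z)$ via L\'evy--It\^o compensation and carefully exchange the cutoff limit with sums and expectations in the covariance calculation. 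Once that analytic bookkeeping is in place, the Sobolev/Hilbert-space convergence machinery above is essentially routine.
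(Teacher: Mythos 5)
Your proof is essentially correct and takes a genuinely different route from the paper. You realize $h_t$ directly as an $H^{-2-\delta}(K)$-valued $L^2$-bounded martingale, compute the covariance $\E[\langle h_t,f\rangle\langle h_t,g\rangle]=\iint f(z)g(w)\E[T(z,w)\wedge t]\,dz\,dw$ by splitting at the disconnection time and using conditional independence afterward, and then conclude via the Hilbert-space martingale convergence theorem together with the Hilbert--Schmidt embedding $H^{2+\delta}_0(K)\hookrightarrow L^2(K)$ (valid since $2+\delta>d/2=1$). The paper instead verifies the two hypotheses of Proposition~\ref{prop::distribution_cvg_criterion} (Miller--Watson--Wilson): $\E[h_n(z)^2]=n$ gives local square-integrability, and $|\E[(h_{n+1}(z)-h_n(z))(h_{n+1}(w)-h_n(w))]|\le\P[T(z,w)\ge n]$ is bounded by splitting $K\times K$ into $|z-w|\le e^{-\sqrt n}$ (where $\E[T]\asymp G_\U$ is used) and $|z-w|\ge e^{-\sqrt n}$ (where the exponential tail of $T$ from Lemma~\ref{lem::exp_decay_aux2} is used); this gives $h_n\to h$ along integers, and a separate Doob-maximal-inequality argument extends to continuous $t$. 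Your approach buys a cleaner derivation: the covariance identity and continuous-time convergence come for free from the martingale/Hilbert-space structure, and you only need $\E[T(z,w)]<\infty$ locally, not the full exponential tail. The paper's criterion is more turnkey, avoiding any discussion of vector-valued martingales. The one thing you should tighten: the intermediate decomposition $\sigma_\gamma=\bar m+(\sigma_\gamma-\bar m)$ and the ``rate-one Poisson'' process $N_t(z)$ are not literally well-posed, because the total arc-arrival intensity $|\mu_{\U,\kappa}|$ is infinite so $N_t(z)=\infty$ a.s.\ and cannot be compensated on its own; the correct route is to treat $h_s(z)$ itself as a compensated L\'evy martingale (as you flag at the end), from which the identity $\E[h_t(z)h_t(w)]=\E[h_{T\wedge t}(z)^2]=\E[T(z,w)\wedge t]$ follows directly by optional stopping and the coupling before/after $T$, without ever introducing $N_t$. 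With that fix, the argument is sound, and parts (2) and (3) are handled the same way as in the paper.
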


Finally, let us discuss the most obvious question: in what discrete models can one find a structure that has our $\CGE_\kappa$ as a scaling limit? The full conformal invariance of the process targeted at all points suggests that probably one should look for structures that can be defined not only as growth processes, but also as static objects, similar to the Conformal Loop Ensembles $\CLE_\kappa$ \cite{SheffieldWernerCLE,WernerWuCLEExploration,MillerWatsonWilsonCLENestingfield}; note however that $\CLE_\kappa$ exists for a different subset of $\kappa$ values: for $\kappa\in (8/3,4]$ if the loops are simple, and for $\kappa\in (8/3,8)$ in general. Also, from such a discrete static point of view, it would seem to be an important property that the order of attaching the $\SLE_\kappa$ arcs should not matter, and such a commutation relation holds only for $\kappa=2$ \cite{Kozdron2007configurational}. Therefore, a good candidate for a suitable discrete model is {\bf Wilson's algorithm} \cite{wilson1996generating}, which generates a Uniform Spanning Tree from iteratively adding Loop-Erased Random Walk trajectories, which converge to $\SLE_2$ arcs \cite{SchrammScalinglimitsLERWUST,LawlerSchrammWernerLERWUST}. Another discrete model that has many features similar to our $\CGE_\kappa$ could be the construction of a {\bf critical percolation} configuration \cite{SmirnovPercolationConformalInvariance, CamiaNewmanPercolationFull, WernerLecturePercolation} from the collection of outermost crossings, viewed from an inner point. However, these crossings converge not to $\SLE_{8/3}$, but different $\SLE_{8/3}(\rho)$ arcs \cite{LawlerSchrammWernerConformalRestriction,WernerConformalRestrictionRelated}, hence we do not have an exact correspondence.

\bigskip

\noindent\textbf{Overview of the paper.} In Section~\ref{s.prelim}, we define the boundary Poisson kernel and the $\SLE_\kappa$ and $\SLE_{\kappa}(\rho)$ processes, prove an overshoot estimate for subordinators, and recall a sufficient condition for the convergence of random fields in the sense of distributions.

In Section~\ref{s.construction}, we prove Theorem~\ref{thm::growing_process_conf_inv}: we construct the growth process for $\kappa<4$, prove that it is conformally invariant, and show that it does not exist for $\kappa\geq 4$. The proofs are based on known estimates on the probability that chordal $\SLE_\kappa$ comes close to a point on the boundary or inside the domain, and on conformal martingales related to these questions, describing the Laplace transform of the capacity of an $\SLE_\kappa$ arc. We also prove Theorem~\ref{thm::abnormal_decay_upper} on the dimension of points with abnormal decay, using Large Deviations theory.

In Section~\ref{s.field}, we give estimates on the disconnection time, proving Theorem~\ref{thm::disconnection_green} in particular. Here a key ingredient is the innocent-looking Lemma~\ref{lem::key_lemma}, saying that the process leaves the boundary $\p\U$ in finite time with positive probability. We also prove Theorem~\ref{thm::growthprocess_dimension} on the dimension being $1+\kappa/8$, where the full conformal invariance of the process targeted at all points is of immense help. Finally, we prove Theorem~\ref{thm::distribution_cvg}, constructing the conformally invariant random fields.

We end the paper with three open problems in Section~\ref{s.open}.

\bigskip

\noindent\textbf{Acknowledgments.} We are indebted to Itai Benjamini for suggesting the model back in 2006, and to Oded Schramm for his insights at the beginning of this project. We also thank St\'ephane Benoist, Tom Ellis, Adrien Kassel, Gregory Lawler and Wendelin Werner for helpful discussions. 
G. Pete is partially supported by the Hungarian National Science Fund OTKA grant K109684,
and by the MTA R\'enyi Institute ``Lend\"ulet'' Limits of Structures Research Group.
H. Wu is supported by the NCCR/SwissMAP, the ERC AG COMPASP, and the Swiss NSF.

\section{Preliminaries}\label{s.prelim}

\noindent\textbf{Notation.}
\[B(z,r)=\{w\in\C: |z-w|<r\},\quad \U=B(0,1); \quad \HH=\{w\in\C: \Im{w}>0\}.\]
\subsection{Green function and Poisson kernel}\label{sec::poisson_kernel}

Let $\eta(z, \cdot; t)$ denote the law of 2D Brownian motion $(B_s, 0\le s\le t)$ starting from $z$. We can write 
\[\eta(z,\cdot; t)=\int_{\C}\eta(z,w; t)dw,\]
where $dw$ denotes the area measure and $\eta(z, w; t)$ is a measure on continuous curve from $z$ to $w$. Define $\eta(z,w)=\int_0^{\infty}\eta(z,w;t)dt$. This is an infinite $\sigma$-finite measure. If $D$ is a domain and $z,w\in D$, define $\eta_D(z,w)$ to be $\eta(z,w)$ restricted to curves stayed in $D$. If $z\neq w$ and $D$ is a domain such that a Brownian motion in $D$ eventually exits $D$, then the total mass $|\eta_D(z,w)|$ is finite and we define the \textit{Green function}
\[G_D(z,w)=\pi|\eta_D(z,w)|.\]
In particular, when $D=\U$ and $z,w\in\U$, we have 
\[G_{\U}(z,w)=\log \left|\frac{1-\bar{z}w}{z-w}\right|.\] 
Just like planar Brownian motion, the Green function is also conformally invariant: if $\varphi : D \lora \varphi(D)$ is a conformal map and $z,w\in D$, then we have 
\begin{equation}\label{eqn::greenfunction_conf_inv}
G_{\varphi(D)}(\varphi(z), \varphi(w))=G_D(z,w).
\end{equation}

Suppose that $D$ is a connected domain with piecewise analytic boundary. Let $B$ be a Brownian motion starting from $z\in D$ and stopped at the first exit time $\tau_D:=\inf\{t: B_t\not\in D\}$. Denote by $\eta_D(z,\partial D)$ the law of $(B_t, 0\le t\le\tau_D)$. We can write 
\[\eta_D(z,\partial D)=\int_{\partial D} \eta_D(z,w)dw,\]
where $dw$ is the length measure on $\partial D$ and $\eta_D(z,w)$ is a measure on continuous curves from $z$ to $w$. The measure $\eta_D(z,w)$ can be viewed as a measure on Brownian paths starting from $z$ and restricted to to exit $D$ at $w$. Define \textit{Poisson kernel} $H_D(z,w)$ to be the total mass of $\eta_D(z,w)$. 

Suppose that $z,w$ are distinct boundary points. Define the measure on Brownian paths from $z$ to $w$ in $D$ to be
\[\eta_D(z,w)=\lim_{\eps\to 0}\frac{1}{\eps}\eta_D(z+\eps \bold{n}_z, w),\]
where $\bold{n}_z$  is the inward normal at $z$. The measure $\eta_D(z,w)$ is called Brownian excursion measure. Define the\textit{boundary Poisson kernel} $H_D(z,w)$ to be the total mass of $\eta_D(z,w)$. From the conformal invariance of Brownian motion, we can derive the conformal covariance of the boundary Poisson kernel (see \cite[Proposition 5.5]{LawlerConformallyInvariantProcesses}): Suppose that $\varphi : D\lora \varphi(D)$ is a conformal map and $z,w\in\partial D$ and $\varphi(z), \varphi(w)\in\partial \varphi(D)$ are analytic boundary points, then 
\begin{equation}\label{eqn::poisson_kernel_conf_cov}
|\varphi'(z)\varphi'(w)|H_{\varphi(D)}(\varphi(z), \varphi(w))=H_D(z,w).
\end{equation} 
Moreover, if $D=\U$ and $z,w\in\partial \U$, we have 
\[H_{\U}(z,w)=\frac{1}{\pi|z-w|^2}.\]
In particular, if $\theta=\arg(z/w)\in[0,2\pi)$, we have 
\begin{equation}\label{eqn::poisson_kernel_estimate}
H_{\U}(z,w)=\frac{1}{4\pi \sin^2(\theta/2)}.
\end{equation}

\subsection{Chordal and radial SLE}\label{sec::sle}

In this section, we review briefly the chordal and radial $\SLE_{\kappa}(\rho)$ processes and refer the reader to \cite{WernerRandomPlanarcurves} and  \cite{LawlerConformallyInvariantProcesses} for a detailed introduction. The \textit{chordal Loewner chain} with a continuous driving function $W: [0,\infty)\to \R$ is the solution for the following ODE: for $z\in \overline{\HH}$,
\[\partial_t g_t(z)=\frac{2}{g_t(z)-W_t},\quad g_0(z)=z.\]
This solution is well-defined up to the swallowing time 
\[T(z):=\inf\big\{t: \inf_{s\in [0,t]}|g_s(z)-W_s|>0\big\}.\]
For $t\ge 0$, define $K_t:=\{z\in\overline{\HH}: T(z)\le t\}$, then $g_t(\cdot)$ is the unique conformal map from $\HH\setminus K_t$ onto $\HH$ with the expansion $g_t(z)=z+2t/z+o(1/z)$ as $z\to\infty$. 

\textit{Chordal $\SLE_{\kappa}$} is the chordal Loewner chain with driving function $W=\sqrt{\kappa}B$ where $B$ is a one-dimensional Brownian motion. For $\kappa\in [0,4]$, the $\SLE_{\kappa}$ process is almost surely a continuous simple curve in $\HH$ from $0$ to $\infty$.  
Suppose $\gamma$ is an $\SLE_{\kappa}$ curve in $\HH$ from $0$ to $\infty$, then it is conformal invariant: for any $c>0$, the curve $c\gamma$ has the same law as $\gamma$ (up to time change). Therefore, we could define chordal $\SLE$ in any simply connected domain. Suppose that $D$ is a simply connected domain and $x,y\in\partial D$ are distinct boundary points. Define $\SLE_{\kappa}$ in $D$ from $x$ to $y$ to be the image of $\SLE_{\kappa}$ in $\HH$ from $0$ to $\infty$ under any conformal map from $\HH$ onto $D$ sending the pair $(0,\infty)$ to $(x,y)$. 

Chordal $\SLE_{\kappa}$ is reversible: suppose that $\gamma$ is an $\SLE_{\kappa}$ in $D$ from $x$ to $y$, then the time-reversal of $\gamma$ has the same law as an $\SLE_{\kappa}$ in $D$ from $y$ to $x$. Thus, we also call  $\SLE_{\kappa}$ in $D$ from $x$ to $y$ as $\SLE_{\kappa}$ in $D$ with two end points $(x,y)$. 

The \textit{radial Loewner chain} with a continuous driving function $W: [0,\infty)\to\partial\U$ is the solution for the following ODE: for $z\in\overline{\U}$,
\[\partial_t g_t(z)=g_t(z)\frac{W_t+g_t(z)}{W_t-g_t(z)},\quad g_0(z)=z.\]
This solution is well-defined up to the swallowing time 
\[T(z):=\inf\big\{t: \inf_{s\in [0,t]}|g_s(z)-W_s|>0\big\}.\]
For $t\ge 0$, define $K_t:=\{z\in\overline{\U}: T(z)\le t\}$, then $g_t(\cdot)$ is the unique conformal map from $\U\setminus K_t$ onto $\U$ with the normalization: $g_t(0)=0, g_t'(0)>0$. 

\textit{Radial $\SLE_{\kappa}$} is the radial Loewner chain with driving function $W=\exp(i\sqrt{\kappa}B)$ where $B$ is a one-dimensional Brownian motion. For $\kappa\in [0,8)$, radial $\SLE_{\kappa}$ is almost surely a continuous curve in $\U$ from $1$ to the origin. \textit{Radial $\SLE_{\kappa}(\rho)$} with $W_0=x\in\partial\U$ and force point $V_0=y\in\partial \U$ is the radial Loewner chain with driving function $W$ solving the following SDEs:
\begin{align*}
dW_t&=i\sqrt{\kappa}B_t-\left(\frac{\kappa}{2}W_t+\frac{\rho}{2}W_t\frac{W_t+V_t}{W_t-V_t}\right)dt,
\quad W_0=x;\\
dV_t&=V_t\frac{W_t+V_t}{W_t-V_t}dt, \quad V_0=y.
\end{align*}
The system has a unique solution up to the collision time 
\[T:=\inf\{t: W_t=V_t\}.\]

We focus on the weight $\rho=\kappa-6$ for the following reason: chordal $\SLE_{\kappa}$ in $\U$ from $x$ to $y$ has the same law as radial $\SLE_{\kappa}(\kappa-6)$  with starting point $W_0=x$ and force point $V_0=y$; see \cite{SchrammWilsonSLECoordinatechanges}. Fix $\kappa\in [0,8)$ and $\rho=\kappa-6$. Define $\theta_t=\arg(W_t)-\arg(V_t)\in (0,2\pi)$, then by It\^{o}'s formula, the process $\theta_t$ satisfies the SDE:
\begin{equation}\label{eqn::radial_sle_theta}
d\theta_t=\sqrt{\kappa}dB_t+\frac{\kappa-4}{2}\cot(\theta_t/2)dt. 
\end{equation}
The collision time $T$ is also the first time that $\theta_t$ hits $0$ or $2\pi$. Moreover, when $\kappa\in [0,8)$, we have $\E[T]<\infty$.

Suppose that $D$ is a proper simply connected domain. The \textit{conformal radius} of $D$ seen from $z\in D$ is $|\varphi'(z)|^{-1}$ where $\varphi$ is any conformal map from $D$ onto $\U$ sending $z$ to the origin. We denote this conformal radius by $\CR(z;D)$. 
 Define the inradius 
 \[\inrad(z;D):=\inf_{w\in \C\setminus D}|z-w|.\]
 By Koebe's one quarter theorem and the Schwarz lemma \cite[Theorem 3.17, Lemma 2.1]{LawlerConformallyInvariantProcesses}, we have that 
 \begin{equation}\label{eqn::koebe_quarter}
 \inrad(z;D)\le \CR(z;D)\le 4\inrad(z;D).
 \end{equation}
 For any compact subset $K\subset\overline{\U}$, let $D$ be the connected component of $\U\setminus K$ that contains $z$. Define the \textit{capacity} of $K$ seen from $z$ to be 
 \[\capa(z;K)=-\log\CR(z;D).\]
 When $z$ is the origin, we simply denote $\CR(0;D)$ and $\capa(0;K)$ by $\CR(D)$ and $\capa(K)$ respectively. One can check that the radial Loewner chain is parameterized by capacity seen from the origin.

\subsection{Overshoot estimate for subordinators}

Suppose that $(X(t), t\ge 0)$ is a right-continuous increasing process starting from 0 and taking values in $[0,\infty)$. We call $X$ a subordinator if it has independent homogeneous increments on $[0,\infty)$. The Laplace transform of a subordinator has a nice expression: for $t>0$ and $\lambda\ge 0$, we have
$\E[\exp(\lambda X(t))]=\exp(-t\Phi(\lambda))$, 
where $\Phi: [0,\infty)\to [0,\infty)$. There exist a unique constant $d\ge 0$ and a unique measure $\Pi$ on $(0,\infty)$, which is called the L\'evy measure of $X$, with $\int (1\wedge x)\Pi(dx)<\infty$ such that, for $\lambda\ge 0$, 
\[\Phi(\lambda)=d\lambda+\int(1-e^{-\lambda x})\Pi(dx).\]
Moreover, one has almost surely, for $t>0$, 
\[X(t)=dt+\sum_{s\le t}\Delta_s,\]
where $(\Delta_s, s\ge 0)$ is a Poisson point process with intensity $\Pi$. (More precisely, we have a Poisson point process $\{(\Delta_j,s_j) : j\in J\}$ with intensity $\Pi\otimes \mathsf{Lebesgue}$ on $(0,\infty) \times [0,\infty)$, the second coordinate being time, where $J$ is a countable set, and we let $\Delta_s:=\Delta_j$ when $s=s_j$, while $\Delta_s:=0$ otherwise.) Define the tail of the L\'evy measure:
\[\overline{\Pi}(x)=\Pi((x,\infty)).\]

We introduce the inverse of $X$: for $x>0$, 
\[L_x=\inf\{t: X(t)>x\},\]
and the processes of first-passage and last-passage of $X$: for $x>0$,
\[D_x=X(L_x),\quad G_x=X(L_x-).\]

A subordinator is a transient Markov process, its potential measure $U(dx)$ is called the renewal measure. It is defined as, for any nonnegative measurable function $f$,
\[\int_0^{\infty}f(x)U(dx)=\E\left[\int_0^{\infty}f(X(t))dt\right].\]

\begin{proposition}\label{prop::firstlast_density}
For every real numbers $a,b,x$ such that $0\le a <x\le a+b$, we have that
\[\PP[G_x\in da, D_x-G_x\in db]=U(da)\Pi(db).\]
\end{proposition}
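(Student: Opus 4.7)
The plan is to identify the event $\{G_x\in da,\,D_x-G_x\in db\}$, restricted to $a<x\le a+b$, with the event that some atom of the underlying Poisson point process of jumps $\{(s_j,\Delta_j)\}$ realises both a pre-jump position $X(s_j-)=a$ and a jump size $\Delta_{j}=b$ that crosses level $x$, and then to read off the joint law by applying the compensation (Campbell) formula for Poisson point processes together with the definition of the renewal measure $U$. Note that the restriction $a<x$ automatically rules out the degenerate case in which the drift alone reaches $x$ (which would force $G_x=D_x=x$ and contribute only to $\{a=x\}$).

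First I would fix a nonnegative measurable test function $f(a,b)$ supported in $\{a<x\le a+b\}$ and write
\[
\E\bigl[f(G_x,D_x-G_x)\bigr]=\E\Bigl[\sum_{s} f\bigl(X(s-),\Delta_s\bigr)\,\mathbf{1}_{\{X(s-)<x\le X(s-)+\Delta_s\}}\Bigr],
\]
using that on the support of $f$ the level $x$ is crossed by a genuine jump of size $\Delta_{L_x}=D_x-G_x$ occurring at time $L_x$, while $X(L_x-)=G_x$. Next I would apply the compensation formula to the jump point process on $[0,\infty)\times(0,\infty)$ of intensity $ds\otimes\Pi(d\Delta)$; the integrand is predictable in $s$ because $X(s-)$ is left-continuous and therefore independent of any atom sitting at time $s$. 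This yields
\[
\E\bigl[f(G_x,D_x-G_x)\bigr]=\int_0^\infty \Pi(db)\int_0^\infty \E\bigl[f(X(s),b)\,\mathbf{1}_{\{X(s)<x\le X(s)+b\}}\bigr]\,ds,
\]
where $X(s-)$ has been replaced by $X(s)$ inside the expectation, which is legitimate since they agree for all but countably many $s$. Swapping orders via Fubini and applying the defining identity $\int_0^\infty \E[g(X(s))]\,ds=\int g(a)\,U(da)$ to $g(a):=f(a,b)\mathbf{1}_{\{a<x\le a+b\}}$ gives
\[
\E\bigl[f(G_x,D_x-G_x)\bigr]=\iint f(a,b)\,\mathbf{1}_{\{a<x\le a+b\}}\,U(da)\,\Pi(db),
\]
which, by the arbitrariness of $f$, identifies the joint law on $\{a<x\le a+b\}$ as $U(da)\Pi(db)$.

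The main technical point is a rigorous application of the compensation formula to the predictable integrand $F(s,\omega,\Delta)=f(X(s-),\Delta)\,\mathbf{1}_{\{X(s-)<x\le X(s-)+\Delta\}}$, together with the verification that the drift contribution is confined to $\{G_x=x\}$ and thus falls outside the support of $f$ once $a<x$ is imposed. Both points are classical in the theory of Poisson point processes and L\'evy processes, and the proposition itself is a standard first-passage identity for subordinators appearing in Bertoin's monographs, which I would cite as an alternative to reproducing the calculation in full.
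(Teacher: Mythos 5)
Your argument is correct and is precisely the proof given in the reference the paper cites (Bertoin, \emph{Subordinators: Examples and Applications}, Lemma 1.10), which the paper does not reproduce; the key steps — reducing to a sum over jump atoms crossing level $x$, applying the compensation formula with the predictable integrand $f(X(s-),\Delta)\,\mathbf{1}_{\{X(s-)<x\le X(s-)+\Delta\}}$, and invoking the definition of the renewal measure — all match. Since the paper's ``proof'' is a bare citation, you have essentially written out what the cited lemma's proof does.
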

\begin{proof}
\cite[Lemma 1.10]{BertoinSubordinators}.
\end{proof}

\begin{proposition}\label{prop::overshoot}
Suppose that $X$ is a subordinator with L\'evy measure $\Pi$ satisfying
\[\int(e^{\lambda_0x}-1)\Pi(dx)<\infty,\quad\text{for some }\lambda_0>0.\]
Then there exists a positive finite constant $C$ (depending on $\Pi$ and $\lambda_0$) such that
\[\PP[D_x-x\ge y]\le Ce^{-\lambda_0y},\quad\text{for all }x\ge 0,y\ge 0.\]
\end{proposition}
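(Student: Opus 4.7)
My plan is to reduce the tail estimate on $D_x - x$ to two separate estimates via the joint law of $(G_x, D_x - G_x)$ in Proposition~\ref{prop::firstlast_density}, and then handle each piece with its own tool. Writing $D_x - x = (D_x - G_x) - (x - G_x)$ with $G_x \in [0, x]$, the event $\{D_x - x \geq y\}$ corresponds to $\{G_x \in da,\, D_x - G_x \geq x - a + y\}$ for some $a \in [0, x]$. Integrating the joint density yields the master identity
\[
\PP[D_x - x \geq y] \;=\; \int_{[0,\,x]} U(da)\,\overline{\Pi}(x - a + y),
\]
which will be the starting point.

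Next I would establish two ingredients. \emph{Exponential tail of $\overline{\Pi}$:} setting $M := \int(e^{\lambda_0 z}-1)\Pi(dz) < \infty$, a Markov inequality (using $e^{\lambda_0 z}-1 \geq \tfrac{1}{2} e^{\lambda_0 z}$ for $z$ beyond some threshold, combined with $\overline{\Pi}(1) < \infty$ to cover the bounded range) gives $\overline{\Pi}(z) \leq C_1 e^{-\lambda_0 z}$ for all $z \geq 1$, with $C_1$ depending only on $\lambda_0$ and $M$. \emph{Uniform renewal bound:} the claim is that
\[
\int_{[0,\,x]} e^{-\lambda_0(x - a)} U(da) \;\leq\; C_2 \qquad \text{independently of } x.
\]
I would deduce this from the subadditivity of the renewal function $V(t) := U([0,t]) = \E[L_t]$, namely $V(t+s) \leq V(t) + V(s)$. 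This subadditivity follows by applying the strong Markov property at $L_t$: after time $L_t$ the shifted process $X(L_t+\cdot) - X(L_t)$ is an independent copy of $X$ starting from $X(L_t) \geq t$, so it takes at most an independent time $\tilde L_s$ to reach level $t+s$, giving $L_{t+s} \leq L_t + \tilde L_s$ and hence $V(t+s) \leq V(t) + V(s)$. Consequently $U((t, t+1]) \leq V(1)$ for every $t \geq 0$, and summing a geometric series over unit shells yields $C_2 \leq V(1)/(1-e^{-\lambda_0})$.

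Combining, for $y \geq 1$ (so that $x - a + y \geq 1$ throughout the integration) the two estimates multiply:
\[
\PP[D_x - x \geq y] \;\leq\; C_1 e^{-\lambda_0 y} \int_{[0,\,x]} e^{-\lambda_0(x-a)} U(da) \;\leq\; C_1 C_2 \, e^{-\lambda_0 y}.
\]
For $y < 1$, the trivial bound $\PP[\cdot] \leq 1 \leq e^{\lambda_0} e^{-\lambda_0 y}$ handles the rest, so $C := \max(C_1 C_2, e^{\lambda_0})$ works. The main obstacle is the uniform renewal estimate: the subadditivity of $V$ is classical but it is the only step where the specific structure of the subordinator (monotonicity together with the strong Markov property) enters in a non-trivial way, whereas the other manipulations are routine. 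An alternative would be to cite directly a renewal estimate from Bertoin's book, but the subadditivity route is the cleanest self-contained argument.
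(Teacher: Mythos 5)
Your proof is correct, and it proceeds more carefully than the paper's own argument at precisely the point where care is needed. Both start from Proposition~\ref{prop::firstlast_density} and both convert the exponential-moment hypothesis into a tail bound $\overline{\Pi}(z)\le C_1 e^{-\lambda_0 z}$. The paper then introduces events $E_k=[G_x\le x-k,\,D_x\ge x+y]$ and asserts $\PP[E_k]\le\overline{\Pi}(k+y)$; but integrating the joint density actually gives $\PP[E_k]=\int_{[0,x-k]}U(da)\,\overline{\Pi}(x-a+y)\le U([0,x-k])\,\overline{\Pi}(k+y)$, so a renewal-measure factor appears which the paper's write-up does not visibly control (and which is not $\le 1$ in general). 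You keep that factor, isolate the true quantity to bound as $\int_{[0,x]}e^{-\lambda_0(x-a)}U(da)$, and control it uniformly in $x$ via the subadditivity $V(t+s)\le V(t)+V(s)$ of $V(t)=\E[L_t]=U([0,t])$, obtained from the strong Markov property at the stopping time $L_t$ together with $X(L_t)\ge t$. That yields $U\big((t,t+1]\big)\le V(1)$ and hence the geometric-series bound $C_2=V(1)/(1-e^{-\lambda_0})$, which is exactly the ingredient the paper's proof implicitly needs. Two tiny polish items: respecting the convention $\overline{\Pi}(z)=\Pi((z,\infty))$ the master identity is cleanest as $\PP[D_x-x>y]=\int_{[0,x)}U(da)\,\overline{\Pi}(x-a+y)$ (which is all you use), and for the left-continuous subordinators the paper ultimately applies this to, the same argument goes through with the obvious adjustments.
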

\begin{proof}
When $y\in [0,1]$, we could take $C\ge e^{\lambda_0}$. Thus we can suppose $y\ge 1$. We divide the event $\{D_x-x\ge y\}$ according to the values of $G_x$. For every $k\in\N$, define
\[E_k=[G_x\le x-k,D_x\ge x+y].\]
By Proposition \ref{prop::firstlast_density}, we have that
\begin{align*}
\PP[E_k]&\le\overline{\Pi}(k+y)\\
&\le \int_{u\ge k+y}e^{\lambda_0 u}e^{-\lambda_0(k+y)}\Pi(du)\\
&\le e^{-\lambda_0(k+y)}\int_{u\ge 1}e^{\lambda_0 u}\Pi(du).
\end{align*}
Thus
\[\PP[D_x-x\ge y]\le\sum_{k\ge 0}\PP[E_k]\le e^{-\lambda_0 y}\frac{\int_{u\ge 1}e^{\lambda_0 u}\Pi(du)}{1-e^{-\lambda_0}}.\]
So we can take
\[C=e^{\lambda_0}\vee \frac{\int_{u\ge 1}e^{\lambda_0 u}\Pi(du)}{1-e^{-\lambda_0}}.\]
\end{proof}

\begin{remark}
In the literature, people usually consider right-continuous subordinators. Whereas, the conclusions in this section also hold for left-continuous subordinators. Note that if $X$ is a left-continuous subordinator, then $X$ can be written as, for $t>0$,
\[X(t)=dt+\sum_{s<t}\Delta_s,\]
where $(\Delta_s, s\ge 0)$ is a Poisson point process. Therefore, the proofs in this section can be modified for left-continuous subordinators without difficulty. In the later part of our paper, we will apply conclusions in this section for left-continuous subordinators. 
\end{remark}

\subsection{Convergence of distributions}

In this subsection, we give an overview of the convergence of distributions. We refer the reader to \cite[Section 1.13]{TaoEpsilon} for a more detailed introduction to the space of distributions and to \cite[Section 5]{MillerWatsonWilsonCLENestingfield} for the proof of the convergence result of distributions that we will need.

Fix a positive integer $d$. The \textit{Schwartz space} $S(\R^d)$ is defined to be the space of smooth functions $f : \R^d\lora \C$ such that all derivatives are rapidly decreasing, i.e., $|x|^n \|\nabla^j f(x)\|$ is bounded for all non-negative integers $n$ and $j$ where we view $\nabla^j f(x)$ as a $d^j$-dimensional vector. We equip $S(\R^d)$ with the topology generated by the family of seminorms $\|f\|_{n,j}:=\sup_{x\in\R^d}|x|^n \|\nabla^j f(x)\|$  for $n, j\ge 0$. 

The space $S(\R^d)^*$ of \textit{tempered distributions} is defined to be the space of continuous linear functionals on $S(\R^d)$. We write the evaluation of $h\in S(\R^d)^*$ on $f\in S(\R^d)$ with the notation $\la h, f\ra$. For $h\in S(\R^d)^*$, we say that $h$ is supported in a closed set $K$ if $\la h, f\ra=0$ for all $f\in S(\R^d)$ that vanish on an open neighborhood of $K$. The support of $h$ is the intersection of all $K$ that $h$ is supported on. For $h\in S(\R^d)^*$ and $f\in C_c^{\infty}(\R^d)$, define the product $hf\in S(\R^d)^*$ by $\la hf, g\ra=\la h, \bar{f}g\ra$ for all $g\in S(\R^d)$. 

For $f\in S(\R^d)$, the Fourier transform and its conjugate are defined by \[\LF(f)(\xi)=\int e^{-2\pi i x\cdot \xi} f(x) dx,\quad \LF^*(f)(\xi)=\int e^{2\pi i x\cdot \xi} f(x) dx,\quad \forall \xi\in\R^d.\]
Note that $f\in S(\R^d)$ implies $\LF(f)\in S(\R^d)$. We may define the Fourier transform of a tempered distribution $h$ by $\la \LF(h), f\ra :=\la h, \LF^*(f)\ra$ for all $f\in S(\R^d)$. We also denote $\LF(h)$ by $\hat{h}$.

For any $s\in\R$, define the \textit{Sobolev space} $H^s(\R^d)$ to be the space of all tempered distributions $h$ such that the distribution $\la \xi\ra^s \hat{h}(\xi)$ lies in $L^2(\R^d)$ where $\la \xi\ra:=(1+|\xi|^2)^{1/2}$. The space $H^s(\R^d)$ is a Hilbert space with the inner product
\[\la f, g\ra_{H^s(\R^d)}:=\int \la\xi\ra^{2s} \hat{f}(\xi)\overline{\hat{g}(\xi)}d\xi.\]

Let $D\subset \R^d$ be an open set. For $h\in C_c^{\infty}(\R^d)^*$, we say that $h\in H_{loc}^s (D)$ if $hf\in H^s(D)$ for all $f\in C_c^{\infty}(D)$. We equip $H^s_{loc}(D)$ with a topology generated by the seminorms $\|f\cdot\|_{H^s(\R^d)}$, which implies that $h_n\to h$ in $H^s_{loc}(D)$ if and only if $\langle h_n, f\rangle \to \langle h, f\rangle$ in $H^s(\R^d)$ for all $f\in C_c^{\infty}(D)$.

\begin{proposition}\label{prop::distribution_cvg_criterion}
Let $D\subset \R^d$ be an open set and $\delta>0$. Suppose that $(h_n)_{n\in \N}$ is a sequence of random measurable functions defined on $D$ satisfying the following conditions: for every compact $K\subset D$,
\begin{enumerate}
\item [(1)] for all $n\ge 1$, we have \[\int _K \E[|h_n(z)|^2]dz<\infty;\]
\item [(2)] there exists a summable sequence $(a_n)_{n\in\N}$ of positive reals such that, for all $n\ge 1$, we have  
\[\iint_{K\times K}\left|\E\left[(h_{n+1}(z)-h_n(z))(h_{n+1}(w)-h_n(w))\right]\right|dzdw\le a_n^3.\]
\end{enumerate} 
Then there exists a random element $h\in H_{loc}^{-d-\delta}(D)$ supported on the closure of $D$ such that almost surely $h_n\to h$ in $H_{loc}^{-d-\delta}(D)$. 
\end{proposition}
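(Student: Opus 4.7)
The plan is to convert the covariance bound (2) into an $L^2$-bound on the Sobolev norm of the increments $f(h_{n+1}-h_n)$ for every test function $f\in C_c^\infty(D)$, then conclude by Borel--Cantelli, and finally patch the per-test-function limits together across a countable exhaustion of $D$.

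The heart of the argument is a Plancherel/Bessel-kernel computation. Fix $f\in C_c^\infty(D)$ supported in a compact $K\subset D$, and set $g_n:=f(h_{n+1}-h_n)$, extended by zero to $\R^d$; by hypothesis (1) and Tonelli, $g_n\in L^2(\R^d)$ almost surely. From the Fourier definition of the Sobolev norm together with the convolution theorem,
\[
\|g_n\|_{H^{-d-\delta}(\R^d)}^2=\iint_{\R^d\times\R^d}G(z-w)\,g_n(z)\,\overline{g_n(w)}\,dz\,dw,
\]
where $G(x):=\int_{\R^d}\langle\xi\rangle^{-2(d+\delta)}e^{2\pi i x\cdot\xi}\,d\xi$ is the Bessel kernel of order $2(d+\delta)$. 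Since $2(d+\delta)>d$, the weight $\langle\xi\rangle^{-2(d+\delta)}$ is integrable on $\R^d$, so $G$ is bounded. Taking expectations and exchanging the order of integration (justified by hypothesis (1) via Cauchy--Schwarz), and then applying hypothesis (2),
\[
\E\bigl[\|g_n\|_{H^{-d-\delta}(\R^d)}^2\bigr]\le\|G\|_\infty\,\|f\|_\infty^2\iint_{K\times K}\bigl|\E\bigl[(h_{n+1}-h_n)(z)(h_{n+1}-h_n)(w)\bigr]\bigr|\,dz\,dw\le C_f\,a_n^3.
\]
Markov's inequality then yields $\P\bigl[\|g_n\|_{H^{-d-\delta}}\ge a_n\bigr]\le C_f\,a_n$, so summability of $(a_n)$ and Borel--Cantelli imply $\|f(h_{n+1}-h_n)\|_{H^{-d-\delta}}\le a_n$ for all but finitely many $n$ almost surely; in particular $(fh_n)_{n\ge 1}$ is a.s.\ Cauchy, hence convergent, in $H^{-d-\delta}(\R^d)$.

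To extract a single random $h\in H^{-d-\delta}_{loc}(D)$ good for every test function simultaneously, I would fix a compact exhaustion $K_1\subset K_2\subset\cdots$ of $D$ together with cutoffs $\chi_m\in C_c^\infty(D)$ satisfying $\chi_m\equiv 1$ on $K_m$, run the step above with $f=\chi_m$ for each $m$, and intersect the countably many almost sure events. The resulting limits $H^{(m)}:=\lim_n\chi_m h_n$ in $H^{-d-\delta}(\R^d)$ agree on $K_m^\circ$ as $m$ grows, so they define a consistent random element $h\in H^{-d-\delta}_{loc}(D)$. For arbitrary $f\in C_c^\infty(D)$, picking $m$ with $\mathrm{supp}(f)\subset K_m^\circ$ and using continuity of multiplication by $f\in C_c^\infty$ on $H^{-d-\delta}(\R^d)$ gives $fh_n=f\chi_m h_n\to fH^{(m)}=fh$, i.e., convergence of $h_n$ to $h$ in $H^{-d-\delta}_{loc}(D)$. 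The support of $h$ lies in $\overline{D}$ automatically, because each $h_n$ vanishes off $D$. The main obstacle I expect is the Plancherel step: one must verify the boundedness of the Bessel kernel $G$ (which is where the specific exponent $d+\delta$ plays its role) and justify the Fubini exchange of expectation with the double spatial integral before hypothesis (2) can be invoked; once these two technical ingredients are in place, the remaining Borel--Cantelli and patching steps are routine.
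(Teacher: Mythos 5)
Your proposal is correct. Note that the paper does not prove this proposition itself but cites \cite[Proposition~5.1]{MillerWatsonWilsonCLENestingfield}; the argument there is essentially the one you give, so there is no genuine discrepancy. Concretely: the Plancherel identity together with symmetry of the Bessel kernel gives
\[
\|g_n\|_{H^{-d-\delta}(\R^d)}^2=\int_{\R^d}\langle\xi\rangle^{-2(d+\delta)}|\hat g_n(\xi)|^2\,d\xi=\iint G(z-w)\,g_n(z)\,\overline{g_n(w)}\,dz\,dw,
\]
your Fubini exchange is indeed justified from hypothesis~(1) via the elementary bound $\iint_{K\times K}|g_n(z)||g_n(w)|\le|K|\int_K|g_n|^2$, and boundedness of $G$ follows since $2(d+\delta)>d$. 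The Markov and Borel--Cantelli steps are sound, and the localization via a countable family of cutoffs $\chi_m$ is the standard way to obtain convergence in $H^{-d-\delta}_{loc}(D)$. A cosmetic shortcut you could take: rather than invoking the Bessel kernel in physical space, one may just bound $\E[|\hat g_n(\xi)|^2]\le\iint_{K\times K}|\E[g_n(z)\overline{g_n(w)}]|\,dz\,dw$ uniformly in $\xi$ and multiply by $\|\langle\cdot\rangle^{-2(d+\delta)}\|_{L^1(\R^d)}$; this gives the same estimate without mentioning $G$ explicitly. The only place you are a little quick is the final claim that $h$ extends to an element of $C_c^\infty(\R^d)^*$ supported on $\overline D$: your construction defines $h$ a priori only against test functions $f\in C_c^\infty(D)$, so to literally satisfy the paper's definition of $H^{-d-\delta}_{loc}(D)$ one should say a word about the (routine) extension; this is a minor technical point and does not affect the substance of the argument.
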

\begin{proof}
\cite[Proposition 5.1]{MillerWatsonWilsonCLENestingfield}.
\end{proof}

\section{Construction of the growth process $\CGE_\kappa$}\label{s.construction}

\subsection{The Poisson point process of SLE excursions}
\label{sec::growthprocess_construction}

Let $H_{\U}(x,y)$ be the boundary Poisson kernel for the unit disc $\U$ with distinct boundary points $x,y\in\U$ as introduced in Section \ref{sec::poisson_kernel}. Denote by $\mu_{\U,\kappa}^{\#}(x,y)$ the law of chordal $\SLE_{\kappa}$ in $\U$ with two end points $x, y$. For $\kappa\in [0,8)$, define the \textit{$\SLE_{\kappa}$ excursion measure} to be 
\begin{equation}\label{eqn::sle_excursion_measure}
\mu_{\U, \kappa}=\int\int dxdy H_{\U}(x,y)\mu^{\#}_{\U,\kappa}(x,y),
\end{equation} 
where $dx, dy$ are length measures on $\partial\U$. Note that $\mu_{\U, \kappa}$ is an infinite $\sigma$-finite measure. From the conformal invariance of SLE and the conformal covariance of boundary Poisson kernel (\ref{eqn::poisson_kernel_conf_cov}), we can derive the conformal invariance of SLE excursion measure.

\begin{proposition}\label{prop::sle_excursion_measure_conf_inv}
The $\SLE$ excursion measure $\mu_{\U, \kappa}$ is conformal invariant: for any M\"obius transformation $\varphi$ of $\U$, we have 
\[\varphi\circ \mu_{\U, \kappa}=\mu_{\U, \kappa},\]
where $\varphi\circ\mu[A]:=\mu[\gamma: \varphi(\gamma)\in A]$.
\end{proposition}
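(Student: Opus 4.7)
The plan is to unwind the definitions and combine two facts already available in the excerpt: the conformal invariance of the normalized chordal SLE law and the conformal covariance of the boundary Poisson kernel from equation (\ref{eqn::poisson_kernel_conf_cov}). Everything then comes out of a routine change of variables on $\partial \U \times \partial\U$.

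First I would record the conformal invariance of $\mu^{\#}_{\U,\kappa}(x,y)$: since this law is defined by pushing $\mu^{\#}_{\HH,\kappa}(0,\infty)$ forward under any conformal map $\HH \to \U$ sending $(0,\infty)$ to $(x,y)$, for any M\"obius self-map $\varphi$ of $\U$ one has tautologically
\[
\varphi \circ \mu^{\#}_{\U,\kappa}(x,y) \;=\; \mu^{\#}_{\U,\kappa}(\varphi(x),\varphi(y)).
\]
Next, for any measurable set $A$ of curves,
\[
\varphi \circ \mu_{\U,\kappa}(A)
= \iint dx\,dy\, H_{\U}(x,y)\, \mu^{\#}_{\U,\kappa}(\varphi(x),\varphi(y))(A).
\]
Now substitute $u=\varphi(x),\, v=\varphi(y)$. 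Since $\varphi$ restricts to a smooth orientation-preserving diffeomorphism of $\partial\U$, the length measure transforms as $du=|\varphi'(x)|\,dx$ and $dv=|\varphi'(y)|\,dy$, giving
\[
\varphi \circ \mu_{\U,\kappa}(A)
= \iint du\,dv\, \frac{H_{\U}(\varphi^{-1}(u),\varphi^{-1}(v))}{|\varphi'(\varphi^{-1}(u))|\,|\varphi'(\varphi^{-1}(v))|}\, \mu^{\#}_{\U,\kappa}(u,v)(A).
\]
Finally, applying (\ref{eqn::poisson_kernel_conf_cov}) to the conformal map $\varphi^{-1}\colon \U \to \U$ at the boundary points $u,v$ shows that the fraction in the integrand collapses to $H_{\U}(u,v)$, so that the right-hand side equals $\mu_{\U,\kappa}(A)$.

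There is no serious obstacle; the only points to check carefully are that the length-measure Jacobian on $\partial\U$ is indeed $|\varphi'|$ (a standard consequence of $\varphi$ being holomorphic and mapping the unit circle to itself), and that M\"obius self-maps of $\U$ extend analytically across $\partial\U$ so that the ``analytic boundary point'' hypothesis needed to invoke (\ref{eqn::poisson_kernel_conf_cov}) is automatic. Both are immediate from the explicit form of $\varphi$.
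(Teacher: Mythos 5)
Your argument is correct and is exactly the route the paper has in mind: the paper states the proposition without a written-out proof, merely remarking that it follows from the conformal invariance of chordal $\SLE$ together with the conformal covariance of the boundary Poisson kernel (\ref{eqn::poisson_kernel_conf_cov}), and your proof simply carries out that change of variables in detail, correctly canceling the $|\varphi'|$ Jacobian factors against the covariance of $H_\U$.
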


We will construct a growth process from a Poisson point process of $\SLE$ excursions. The construction is not surprising if one is familiar with \cite{SheffieldWernerCLE} and \cite{WernerWuCLEExploration}. To be self-contained, we briefly explain the construction. Let $(\gamma_t, t\ge 0)$ be a Poisson point process with intensity $\mu_{\U,\kappa}$. More precisely, let $((\gamma_j, t_j), j\in J)$ be a Poisson point process with intensity $\mu_{\U, \kappa}\otimes[0,\infty)$, and then arrange the excursions $\gamma_j$ according to $t_j$: denote the excursion $\gamma_j$ by $\gamma_t$ if $t=t_j$ and $\gamma_t$ is empty set if there is no $t_j$ that equals $t$. There are only countably many excursions in $(\gamma_t, t\ge 0)$ that are not empty set. For $\kappa\in [0,8)$, with probability one there is no $\gamma_t$ passing through the origin. For each $t$ such that $\gamma_t$ is not the empty set, the curve $\gamma_t$ separates $\U$ into two connected components, and we denote by $U_t^0$ the one that contains the origin. Let $f_t$ be the conformal map from $U_t^0$ onto $\U$ normalized at the origin: $f_t(0)=0, f_t'(0)>0$. For $t>0$, define the accumulated capacity to be 
\[X_t=\sum_{s<t}\capa(\gamma_s).\]
\begin{proposition}\label{prop::accumulated_capacity_finite}
For $t>0$, the accumulated capacity $X_t$ is almost surely finite if and only if $\kappa\in [0,4)$. 
\end{proposition}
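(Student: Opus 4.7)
The starting point is that $X_t$ is a pure-jump subordinator whose L\'evy measure $\Pi$ is the pushforward of $\mu_{\U,\kappa}$ under $\gamma\mapsto\capa(\gamma)$; by the standard integrability criterion, $X_t$ is a.s.\ finite for some (equivalently all) $t>0$ if and only if $\int(1\wedge\capa(\gamma))\,d\mu_{\U,\kappa}(\gamma)<\infty$. Unpacking the definition of $\mu_{\U,\kappa}$ via \eqref{eqn::sle_excursion_measure} and using $H_\U(x,y)=1/(4\pi|x-y|^2)$ together with the rotational invariance of the law of chordal $\SLE_\kappa$, this reduces (up to a positive constant) to finiteness of
\[
I_\kappa \;:=\; \int_0^{2\pi} \frac{\E_\theta[1\wedge \capa(\gamma)]}{\sin^2(\theta/2)}\,d\theta,
\]
where $\E_\theta$ denotes expectation under chordal $\SLE_\kappa$ in $\U$ from $1$ to $e^{i\theta}$.

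The crucial reduction is to invoke the identification, recalled in Section~\ref{sec::sle}, of chordal $\SLE_\kappa$ from $x$ to $y$ with radial $\SLE_\kappa(\kappa-6)$ started at $x$ with force point $y$. Because the radial Loewner chain is parameterized by capacity seen from the origin, the capacity $\capa(\gamma)$ equals the collision time $T=\inf\{t:\theta_t\in\{0,2\pi\}\}$ of the angle diffusion \eqref{eqn::radial_sle_theta} on $(0,2\pi)$. Since the integrand of $I_\kappa$ is bounded on any compact subinterval of $(0,2\pi)$, everything reduces to the small-$\theta$ asymptotics of $f(\theta):=\E_\theta[1\wedge T]$, and by the symmetry $\theta\leftrightarrow 2\pi-\theta$ we may focus on $\theta\downarrow 0$.

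To pin down $f(\theta)$, I would first compute $u(\theta):=\E_\theta[T]$ by solving the boundary value problem $\mathcal L u=-1$, $u(0)=u(2\pi)=0$, where $\mathcal L=\tfrac{\kappa}{2}\partial_\theta^2+\tfrac{\kappa-4}{2}\cot(\theta/2)\partial_\theta$. The integrating factor $\mu(\theta)=\sin(\theta/2)^{2-8/\kappa}$ together with the symmetry $u(\theta)=u(2\pi-\theta)$ gives the explicit formula $u'(\theta)=\tfrac{2}{\kappa\,\mu(\theta)}\int_\theta^\pi\mu(\phi)\,d\phi$. An asymptotic analysis as $\theta\downarrow 0$ splits according to whether $\mu$ is integrable at $0$: for $\kappa\le 8/3$ (non-integrable $\mu$; the near-$0$ part dominates) one finds $u(\theta)\asymp\theta^2$, while for $\kappa\in(8/3,8)$ (integrable $\mu$, so $\int_\theta^\pi\mu\to{\rm const}$) one finds $u(\theta)\asymp\theta^{8/\kappa-1}$. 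Since $f\le u$, this already settles the ``if'' direction: for $\kappa<4$ the exponent $\min(2,\,8/\kappa-1)$ is strictly greater than $1$, so $I_\kappa<\infty$.

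For the ``only if'' direction I need a matching lower bound $f(\theta)\gtrsim\theta^{\min(2,\,8/\kappa-1)}$. Near $0$ the angle process behaves like a Bessel-type diffusion of local dimension $d=3-8/\kappa$, and a scale-function computation shows that starting from $\theta$ small it escapes to a macroscopic distance from $0$ with probability $\asymp\theta^{8/\kappa-1}\wedge 1$, after which the exit time $T$ is bounded above and below by positive constants with positive conditional probability; in the complementary ``quick'' regime one has $T\asymp\theta^2$. Together this yields $\E_\theta[1\wedge T]\gtrsim\theta^{\min(2,\,8/\kappa-1)}$, and so $I_\kappa=+\infty$ for every $\kappa\in[4,8)$. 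The main technical obstacle I anticipate is executing this asymptotic analysis uniformly across the Bessel transition at $\kappa=8/3$, and at the critical value $\kappa=4$ (where the angle process reduces to Brownian motion of variance $4$ with $u(\theta)=\theta(2\pi-\theta)/4$) justifying carefully that $f(\theta)\asymp u(\theta)\asymp\theta$ so that $I_\kappa$ still diverges, this time logarithmically; the remaining ingredients --- the L\'evy--Khintchine reduction and the identification $\capa(\gamma)=T$ --- are essentially bookkeeping given the tools developed in Sections~\ref{s.prelim} and~\ref{sec::sle}.
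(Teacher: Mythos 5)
Your proposal is correct, and it takes a genuinely different route from the paper on both the reduction and the core estimate.

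On the reduction: the paper splits the two directions apart. For $\kappa<4$ it shows $\E[X_t]<\infty$ (Proposition~\ref{prop::accumulated_capacity_laplace_exponent}~(2)), and for $\kappa\ge 4$ it gives a hand-rolled Poissonian argument --- counting excursions with capacity in dyadic annuli $[2^{-k},2^{-k+1})$, observing these counts are independent Poissons, and showing $\E[\exp(-\sum_k 2^{-k}M_k)]=0$. You instead invoke the standard L\'evy--It\^o criterion $\int(1\wedge x)\,\Pi(dx)<\infty$ directly, which packages both the a.s.\ finiteness and the a.s.\ divergence into a single statement. These are really the same fact --- the paper's dyadic Poisson argument is essentially an in-line proof of the divergence half of that criterion --- so this is a packaging improvement rather than a new idea, but it is cleaner and you correctly identify that it suffices to control $\E_\theta[1\wedge\capa]$ rather than $\E_\theta[\capa]$, which avoids the integrability-at-large-capacity issue the paper has to handle separately (via the Rohde--Schramm one-point estimate).

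On the core asymptotic in $\theta$: the paper gets $\E[\capa(\gamma^\theta)]\asymp\theta^{\min(2,\,8/\kappa-1)}$ (with the $\log$ correction at $\kappa=8/3$) by appealing to the Alberts--Kozdron boundary-diameter estimate $\P[\diam(\gamma^\theta)>r]\asymp(\theta/r)^{8/\kappa-1}$, summing over dyadic scales, and patching the tail with a one-point interior estimate. You instead use the identification $\capa(\gamma)=T$ (the collision time of the radial $\SLE_\kappa(\kappa-6)$ angle diffusion~\eqref{eqn::radial_sle_theta}), and solve $\mathcal{L}u=-1$, $u(0)=u(2\pi)=0$, with the generator $\mathcal{L}=\tfrac{\kappa}{2}\partial_\theta^2+\tfrac{\kappa-4}{2}\cot(\theta/2)\partial_\theta$. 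The integrating factor $\mu(\theta)=\sin(\theta/2)^{2-8/\kappa}$ and the symmetry $u'(\pi)=0$ give the explicit $u'(\theta)=\tfrac{2}{\kappa\mu(\theta)}\int_\theta^\pi\mu$, and the dichotomy in the small-$\theta$ behavior (integrable vs.\ non-integrable $\mu$ at $0$) reproduces exactly the phase transition at $\kappa=8/3$ in~\eqref{eq::small_u_E}, including the logarithmic correction. This is more elementary and self-contained than the paper's argument and, unlike the hypergeometric route in Proposition~\ref{prop::sle_capacity_laplace}, it has no exceptional set $c(\kappa)\in\Z$ to excise. The price is that you then need the separate scale-function lower bound on $\E_\theta[1\wedge T]$ for the divergence direction; this is a standard Bessel-type calculation (escape probability $\asymp s(\theta)/s(\pi)\asymp\theta^{8/\kappa-1}$, then a fixed positive time with positive probability after escape), and since you only need it for $\kappa\ge 4$, where $8/\kappa-1\le 1<2$, the escape term always dominates and the ``quick regime'' case you flag as delicate can be dropped entirely. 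Both approaches are sound; yours replaces two imported $\SLE$ estimates with a direct ODE computation on the angle process, at the cost of one extra (routine) one-dimensional diffusion estimate.
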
 
We will complete the proof of Proposition \ref{prop::accumulated_capacity_finite} in Section \ref{sec::accumulated_capacity_laplace}. Assuming Proposition \ref{prop::accumulated_capacity_finite}, we can now construct the growth process for $\kappa\in [0,4)$. For any fixed $T>0$ and $r>0$, let $t_1(r)<t_2(r)<\cdots<t_j(r)$ be the times $t$ before $T$ at which the distance between the two end points of $\gamma_t$ is at least $r$. Define 
\[\Psi^r_T=f_{t_j(r)}\circ\cdots\circ f_{t_1(r)}.\]
The map $\Psi^r_T$ is a conformal map from some subset of $\U$ onto $\U$. 
By Proposition \ref{prop::accumulated_capacity_finite}, we know that $X_T<\infty$ almost surely when $\kappa\in [0,4)$. Then the conformal map $\Psi^r_T$ converges almost surely in the Carath\'eodory topology seen from the origin, as $r\to 0$; see \cite[Section 4.3, Stability of Loewner chains]{SheffieldWernerCLE}. Define 
\[\left(D_t^0:=\Psi_t^{-1}(\U), t\ge 0\right).\]
This is a decreasing sequence of simply connected domains containing the origin, and we call it \textit{the growth process $\CGE_\kappa$ of $\SLE$ excursions targeted at the origin}. By the conformal invariance of the $\SLE$ excursion measure, we can derive the conformal invariance of $\CGE_\kappa$.

\begin{lemma} \label{lem::growing_process_conf_inv}
For $\kappa\in [0,4)$, the law of the growth process $(D_t^0, t\ge 0)$ is conformally invariant under any M\"obius transformation $\varphi$ of $\U$ that preserves the origin. 
\end{lemma}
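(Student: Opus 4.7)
The plan is to deduce the conformal invariance of $(D_t^0,t\ge 0)$ from the conformal invariance of the driving $\SLE$ excursion measure (Proposition~\ref{prop::sle_excursion_measure_conf_inv}) combined with an equivariance property of the deterministic construction that produces $(D_t^0)$ from the Poisson point process $(\gamma_t)$. Any M\"obius transformation $\varphi$ of $\U$ fixing the origin is a rotation $z\mapsto e^{i\theta}z$; in particular $\varphi$ is an isometry of $\partial\U$ with $\varphi(0)=0$ and $|\varphi'(0)|=1$. Setting $\tilde\gamma_t:=\varphi(\gamma_t)$, Proposition~\ref{prop::sle_excursion_measure_conf_inv} tells us that $(\tilde\gamma_t)$ is itself a PPP with intensity $\mu_{\U,\kappa}$, so it is equal in law to $(\gamma_t)$.

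First I would analyse the finite-cutoff construction. Because $\varphi$ preserves chord lengths on $\partial\U$, the thresholded arrival times $t_1(r)<\cdots<t_j(r)$ are the same for the two processes. The connected component of $\U\setminus\tilde\gamma_t$ containing $0$ is $\tilde U_t^0=\varphi(U_t^0)$, and a direct normalization check (using $\varphi(0)=0$, $\varphi'(0)=e^{i\theta}$, $(\varphi^{-1})'(0)=e^{-i\theta}$, and $f_t'(0)>0$) yields the intertwining relation
\[ \tilde f_t=\varphi\circ f_t\circ\varphi^{-1}. \]
Telescoping the composition, the cancellations $\varphi^{-1}\circ\varphi=\mathrm{id}$ give
\[ \tilde\Psi^r_T=\tilde f_{t_j(r)}\circ\cdots\circ\tilde f_{t_1(r)}=\varphi\circ\Psi^r_T\circ\varphi^{-1}, \]
and hence $\tilde D_T^{0,r}:=(\tilde\Psi^r_T)^{-1}(\U)=\varphi(D_T^{0,r})$.

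Finally I would pass to the limit $r\to 0$. The Carath\'eodory convergence $\Psi^r_T\to\Psi_T$ invoked in the construction of the growth process (via the stability of Loewner chains from \cite{SheffieldWernerCLE}), combined with the continuity of pre- and post-composition by the fixed M\"obius map $\varphi$, gives $\tilde\Psi_T=\varphi\circ\Psi_T\circ\varphi^{-1}$ and hence $\tilde D_T^0=\varphi(D_T^0)$ almost surely for each fixed $T$. Applying this along a countable dense set of times and using that both $(D_t^0)$ and $(\tilde D_t^0)$ are decreasing sequences of domains that jump only at the countably many non-empty arrival times extends the identity to all $t\ge 0$ simultaneously. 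Since $(\tilde D_t^0)$ is built by the same measurable rule from an input equal in law to $(\gamma_t)$, we conclude $(\varphi(D_t^0))_{t\ge 0}$ has the same law as $(D_t^0)_{t\ge 0}$. The only step that requires a little care, rather than a genuine obstacle, is the normalization bookkeeping behind $\tilde f_t=\varphi\circ f_t\circ\varphi^{-1}$; the passage to the Carath\'eodory limit is automatic because $\varphi$ is a single fixed M\"obius self-map of $\U$.
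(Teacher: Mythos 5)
Your proof is correct and follows essentially the same route as the paper's: invoke Proposition~\ref{prop::sle_excursion_measure_conf_inv} to see that $(\varphi(\gamma_t))$ is again a PPP with intensity $\mu_{\U,\kappa}$, then observe the intertwining $\tilde f_t=\varphi\circ f_t\circ\varphi^{-1}$ (and hence the same for $\Psi_t$), and conclude by equality in law. The extra bookkeeping you supply — identifying $\varphi$ as a rotation, noting that the $r$-cutoff arrival times are preserved, and passing carefully through the Carath\'eodory limit — are details the paper leaves implicit but does not change the substance of the argument.
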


\begin{proof}
Let $(\hat{\gamma}_t, t\ge 0)$ be a Poisson point process with intensity $\mu_{\U, \kappa}$, and define $\hat{f}_t$ and $\hat{\Psi}_t$ for each $t$ as described above, and denote by $(\hat{D}_t^0, t\ge 0)$ the corresponding growth process targeted at the origin. 

By Proposition \ref{prop::sle_excursion_measure_conf_inv}, we know that the process $(\gamma_t:=\varphi(\hat{\gamma}_t), t\ge 0)$ is also a Poisson point process with intensity $\mu_{\U, \kappa}$. Define $f_t$ and $\Psi_t$ for each $t$, and denote by $(D_t^0, t\ge 0)$ be the corresponding growth process targeted at the origin. It is clear that 
\[f_t=\varphi\circ\hat{f}_t\circ\varphi^{-1},\quad \Psi_t=\circ_{s<t}f_s=\varphi\circ\hat{\Psi}_t\circ\varphi^{-1}.\] 
Since $(\gamma_t, t\ge 0)$ has the same law as $(\hat{\gamma}_t, t\ge 0)$, the process $(D_t^0=\varphi(\hat{D}_t^0), t\ge 0)$ has the same law as $(\hat{D}_t^0, t\ge 0)$ as desired.
\end{proof}

We can construct $\CGE_\kappa$ targeted at any $z\in\U$ in the same way as above, except that we choose to normalize at $z$ instead of normalizing at the origin. Another way to describe $\CGE_\kappa$ targeted at $z$ would be $(\varphi(D_t^0), t\ge 0)$ where $\varphi$ is any M\"obius transformation of $\U$ that sends the origin to $z$. By Lemma \ref{lem::growing_process_conf_inv}, the choice of $\varphi$ does not affect the law of $(\varphi(D_t^0), t\ge 0)$, thus $\CGE_\kappa$ targeted at $z$ is well-defined.

Now we will describe the relation between two growth processes targeted at distinct points $z, w\in\U$. Let $(D_t^z, t\ge 0)$ (resp. $(D_t^w, t\ge 0)$) be $\CGE_\kappa$ processes targeted at $z$ (resp. targeted at $w$), and define $T(z,w)$ (resp. $T(w,z)$) to be the first time $t$ that $w\not\in D_t^z$ (resp. $z\not\in D_t^w$). We call $T(z,w)$ the disconnection time. The interesting property of these growth processes is that the two processes have the same law up to the disconnection time. 

\begin{proposition}\label{prop::relation_two_growing_processes}
For $\kappa\in [0,4)$, and for any $z, w\in \U$, the law of $(D_t^z, t<T(z,w))$ is the same as the law of $(D^w_t, t<T(w,z))$.
\end{proposition}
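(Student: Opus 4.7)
The plan is to give an intrinsic description of each process that depends only on the ``physical'' excursions in the current domain (not on the chosen conformal uniformization), and then to couple the two processes via a common such realization up to the disconnection time.

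\textbf{Step 1. Intrinsic description.} In the construction of $\CGE_\kappa$ targeted at $z$, a PPP $(\gamma_t)_t$ of intensity $\mu_{\U,\kappa}$ is fed into the iterated uniformizations $\Psi_t^z$ normalized at $z$; the ``physical'' excursion produced at time $t$ is $\tilde\gamma_t := (\Psi_{t^-}^z)^{-1}(\gamma_t)\subset D_{t^-}^z$, and $D_t^z$ is the component of $D_{t^-}^z \setminus \tilde\gamma_t$ containing $z$. By the conformal invariance of $\mu_{\U,\kappa}$ (Proposition~\ref{prop::sle_excursion_measure_conf_inv}) together with the mapping theorem for Poisson point processes, $(\tilde\gamma_t)_t$ is itself (under the cutoff-to-limit procedure of Section~\ref{sec::growthprocess_construction}) a PPP in the moving physical domain with intensity $\mu_{D_{t^-}^z,\kappa}$. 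This reformulation is intrinsic in the pair (current domain, target point) and does not single out any specific uniformizing map.

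\textbf{Step 2. Coupling.} I would build a single decreasing family $(D_t, t\ge 0)$ of simply connected subdomains of $\U$ initially containing both $z$ and $w$, driven by a PPP (with the same cutoff-to-limit procedure) of physical excursions in the moving domain $D_t$ of intensity $\mu_{D_t,\kappa}$; at each arrival, $D_{t^+}$ is the component of $D_t$ minus the new excursion that contains \emph{both} $z$ and $w$, continued as long as they lie in a common component, and $T$ denotes the first time an arriving excursion separates them. By Step~1 applied with target $z$, the process $(D_t, t < T)$ has the law of $(D_t^z, t < T(z,w))$; by the same argument with target $w$, it also has the law of $(D_t^w, t < T(w,z))$. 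These two laws therefore coincide.

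\textbf{Step 3. Anticipated obstacle.} The only nontrivial point is making rigorous the ``PPP in the moving frame'' with infinite intensity $\mu_{D_t,\kappa}$ on a time-varying domain. This reduces, via conjugation by $\Psi_{t^-}^z$, to the fixed-intensity PPP in $\U$ used in Section~\ref{sec::growthprocess_construction}, and the cutoff-to-limit convergence then follows from Proposition~\ref{prop::accumulated_capacity_finite}. Beyond this essentially bookkeeping step, the proposition is almost a tautology once the intrinsic description is available, since it amounts to the geometric fact that before disconnection the component of $D_t\setminus\tilde\gamma_t$ containing $z$ is the same as the one containing $w$.
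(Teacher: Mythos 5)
Your proposal is correct and follows essentially the same strategy as the paper: couple the two processes so that they share the same physical excursions up to the disconnection time, using conformal invariance of $\mu_{\U,\kappa}$ to guarantee that the pushforward is still a PPP. The paper makes your Step~3 rigorous by invoking the classical fact that a Poisson point process is preserved under a predictable intensity-preserving transformation; concretely, the M\"obius map $\varphi_t=G_t\circ(\hat\Psi^z_t)^{-1}$ relating the two normalizations is $\sigma(\hat\gamma_s, s<t)$-measurable, which is exactly the rigorous content of your moving-frame picture.
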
  
\begin{proof}
Recall a classical result about Poisson point processes (see \cite[Section 0.5]{BertoinLevyProcesses}): Let $(a_t, t\ge 0)$ be a Poisson point process with some intensity $\nu$ (defined in some metric space $A$). Let $\LF_{t-}=\sigma(a_s, s<t)$. If $(\Phi_t, t\ge 0)$ is a process (with values on functions of $A$ to $A$) such that for any $t>0$, $\Phi_t$ is $\LF_{t-}$-measurable, and that $\Phi_t$ preserves $\nu$, then $(\Phi_t(a_t), t\ge 0)$ is still a Poisson point process with intensity $\nu$.

Let $(\hat{\gamma}_t, t\ge 0)$ be a Poisson point process with intensity $\mu_{\U, \kappa}$ and define $\LF_{t-}=\sigma(\hat{\gamma}_s, s<t)$, let $\hat{f}_t^z$ and $\hat{\Psi}_t^z$ be the conformal maps as described above normalized at $z$. Let $(\hat{D}^z_t, t\ge 0)$ be the corresponding growth process targeted at $z$ and $\hat{T}(z,w)$ be the first time $t$ that $w\not\in \hat{D}_t^z$. For each $t<\hat{T}(z,w)$, the domain $\hat{D}_t^z$ contains $w$, and let $G_t$ be the conformal map from $\hat{D}_t^z$ onto $\U$ normalized at $w$: $G_t(w)=w$ and $G_t'(w)>0$. 

For each $t<\hat{T}(z,w)$, define $\varphi_t=G_t\circ(\hat{\Psi}^z_t)^{-1}$. For $t=\hat{T}(z,w)$, define $\varphi_t=\lim_{s\uparrow t}\varphi_s$. For $t>\hat{T}(z,w)$, define $\varphi_t$ to be identity map. Note that $\hat{\Psi}^z_t=\circ_{s<t}\hat{f}_s^z$ and thus $\hat{\Psi}^z_t$ is $\LF_{t-}$-measurable. Therefore, for all $t>0$, $\varphi_t$ is $\LF_{t-}$-measurable. By Proposition \ref{prop::sle_excursion_measure_conf_inv} and the classical result of Poisson point process recalled at the beginning of the proof, we know that $(\gamma_t:=\varphi_t(\hat{\gamma}_t), t\ge 0)$ is also a Poisson point process with intensity $\mu_{\U, \kappa}$. For $(\gamma_t, t\ge 0)$, let $(D_t^w, t\ge 0)$ be the corresponding growth process targeted at $w$ and let $T(w,z)$ be the first time $t$ that $z\not\in D_t^w$. By the construction, we have that 
\[D^w_t=\hat{D}^z_t, \quad \text{for all }t<T(w,z).\]
Hence, in this coupling, we have $T(w,z)\le \hat{T}(z,w)$. By symmetry, we have that $T(w,z)= \hat{T}(z,w)$ almost surely. In particular, this coupling implies that the two disconnection times have the same law, and the two growth processes $(D^z_t, t\ge 0)$ and $(D^w_t, t\ge 0)$ have the same law up to the disconnection time. 
\end{proof}

Proposition \ref{prop::relation_two_growing_processes} tells that, for any $z,w\in\U$, it is possible to couple the two growth processes targeted at $z$ and $w$ respectively to be identical up to the first time at which the points $z,w$ are disconnected. Hence, it is possible to couple the growth processes $(D^z_t, t\ge 0)$ for all $z$ in a fixed countable dense subset of $\U$ simultaneously in such a way that for any two points $z$ and $w$, the above statement holds. 

For such a coupling, we get a Markov process on domains $(D_t, t\ge 0)$: At $t=0$, the domain is $\U$, and at time $t>0$, it is the union of all the disjoint open subsets corresponding to the growth process targeted at all points $z$ at time $t$. We call this Markov process \textit{the conformal growth process of $\SLE$ excursions targeted at all points}, or $\CGE_\kappa$. By construction, it is naturally conformal invariant. This completes the proof of Theorem \ref{thm::growing_process_conf_inv}.

In Subsections \ref{sec::chordal_sle_laplace} and \ref{sec::accumulated_capacity_laplace}, we will calculate the Laplace transform of the accumulated capacity and complete the proof of Proposition~\ref{prop::accumulated_capacity_finite}. 

\subsection{The Laplace transform of the capacity of chordal SLE}\label{sec::chordal_sle_laplace}

 In this section, we will calculate the Laplace transform of the capacity of chordal $\SLE_{\kappa}$ in $\U$. To this end, we need to recall some basic facts about hypergeometric functions. The hypergeometric function is defined for $|z|<1$ by the power series
\begin{equation}\label{eqn::hypergeometric_function}
\phi(a,b;c;z)=\sum_{n=0}^\infty \frac{(a)_n (b)_n}{(c)_n n!}z^n,
\end{equation}
where $(q)_n$ is the Pochhammer symbol defined by 
\[(q)_n=\begin{cases}
1,&n=0;\\
q(q+1)\cdots(q+n-1), &n\ge 1.
\end{cases}\]
The function in (\ref{eqn::hypergeometric_function}) is only well-defined for $c\not\in\{0,-1,-2,-3...\}$. The hypergeometric function is a solution of Euler's hypergeometric differential equation
\begin{equation}\label{eqn::hypergeometric_ode}
-ab\phi+(c-(a+b+1)z)\phi'+z(1-z)\phi''=0.
\end{equation}
Note that, when $c\not\in \Z$, the following function is also a solution to Equation (\ref{eqn::hypergeometric_ode}):
\[z^{1-c}\phi(1+a-c, 1+b-c; 2-c; z).\]

\begin{proposition}\label{prop::sle_capacity_laplace}
Fix $\kappa\in [0,8)$ and $\lambda\in (0,1-\kappa/8)$. 
Suppose that $\gamma=\gamma^\theta$ is a chordal $\SLE_{\kappa}$ in $\U$ from $x\in\partial\U$ to $y\in\partial\U$,  where $\theta=\arg(x)-\arg(y)$. Define three constants 
\[a=1-\frac{4}{\kappa}+\sqrt{\left(1-\frac{4}{\kappa}\right)^2+\frac{8\lambda}{\kappa}},\quad b=1-\frac{4}{\kappa}-\sqrt{\left(1-\frac{4}{\kappa}\right)^2+\frac{8\lambda}{\kappa}},\quad c=\frac{3}{2}-\frac{4}{\kappa}.\]
Assume that $c\not\in\Z$. Define two functions $f$ and $g$: for $u\in [0,1]$, 
\[f(u)=\phi(a,b;c;u),\quad g(u)=u^{1-c}\phi(1+a-c,1+b-c;2-c;u).\]
Then, we have
\begin{equation}\label{eqn::sle_capacity_laplace}
\E[\exp(\lambda \capa(\gamma^\theta))]=f(u)+\frac{(1-f(1))}{g(1)}g(u),
\end{equation}
where $u=\sin^2(\theta/4)$.
\end{proposition}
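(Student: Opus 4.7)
The plan is to reduce the computation to a one-dimensional boundary value problem via Feynman--Kac and to solve it explicitly by recognizing Euler's hypergeometric equation. By the Schramm--Wilson coordinate change recalled in Section~\ref{sec::sle}, chordal $\SLE_\kappa$ in $\U$ from $x$ to $y$ has the same law as radial $\SLE_\kappa(\kappa-6)$ with $W_0=x$ and $V_0=y$, and the radial Loewner chain is parametrized by capacity seen from the origin. Hence $\capa(\gamma^\theta)$ equals the collision time $T=\inf\{t:\theta_t\in\{0,2\pi\}\}$ of the driving diffusion (\ref{eqn::radial_sle_theta}) started from $\theta_0=\theta$, and the proposition reduces to computing $h(\theta):=\E_\theta[\exp(\lambda T)]$.

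Assuming $\E_\theta[e^{\lambda T}]<\infty$ a priori, standard Feynman--Kac theory yields that $h$ is $C^2$ on $(0,2\pi)$, satisfies $L h+\lambda h=0$ with generator $L=\tfrac{\kappa}{2}\partial_\theta^2+\tfrac{\kappa-4}{2}\cot(\theta/2)\partial_\theta$, and has boundary values $h(0+)=h(2\pi-)=1$; equivalently, $M_t:=e^{\lambda t}h(\theta_t)$ is a local martingale. Substituting $u=\sin^2(\theta/4)$ and using the identities $\sin(\theta/2)=2\sqrt{u(1-u)}$ and $\cos(\theta/2)=1-2u$, a chain-rule computation transforms the ODE into
\[ u(1-u)\,\tilde h''(u)+c\,(1-2u)\,\tilde h'(u)+\tfrac{8\lambda}{\kappa}\,\tilde h(u)=0, \]
which is Euler's hypergeometric equation (\ref{eqn::hypergeometric_ode}) with $a+b+1=2c$ and $ab=-8\lambda/\kappa$; solving the resulting quadratic recovers the $a,b$ of the statement. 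Since $c\notin\Z$ the general solution is $Af+Bg$, and because $\kappa<8$ gives $1-c>0$ one has $f(0)=1$ and $g(0)=0$; the boundary condition at $u=0$ therefore forces $A=1$, after which $\tilde h(1)=1$ forces $B=(1-f(1))/g(1)$, proving (\ref{eqn::sle_capacity_laplace}).

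The main obstacle I anticipate is the rigorous justification of Feynman--Kac: showing $\E_\theta[e^{\lambda T}]<\infty$ for every $\lambda<1-\kappa/8$ and that $M_{t\wedge T}$ is uniformly integrable, so that optional stopping yields $h(\theta)=\E_\theta[M_T]=\E_\theta[e^{\lambda T}h(\theta_T)]=\E_\theta[e^{\lambda T}]$ since $h\equiv 1$ on the boundary. I would argue in reverse from the candidate formula: by Gauss' summation $\phi(a,b;c;1)=\Gamma(c)\Gamma(c-a-b)/[\Gamma(c-a)\Gamma(c-b)]$, applicable since $c-a-b=-\tfrac{1}{2}+\tfrac{4}{\kappa}>0$ for $\kappa<8$, both $f(1)$ and $g(1)$ are finite, and $g(1)$ vanishes precisely when one of $1-a$, $1-b$ is a non-positive integer; a direct computation shows that, for $\lambda>0$, the first such threshold is $a=1$, equivalent to $\lambda=1-\kappa/8$. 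Hence for $\lambda<1-\kappa/8$ the candidate $\tilde h$ is continuous and bounded on $[0,1]$, making $M_{t\wedge T\wedge n}$ a bounded martingale; combining this with a tail estimate for $T$ obtained from the Bessel-type behavior of $\theta_t$ near the absorbing boundary (where $\theta_t$ locally resembles a Bessel process of dimension $3-8/\kappa$), one can pass to the limit $n\to\infty$ and conclude.
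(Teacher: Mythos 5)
Your proposal follows essentially the same route as the paper: both reduce to the collision time $T$ of the driving diffusion $\theta_t$ via the $\SLE_\kappa(\kappa-6)$ coordinate change, both arrive at the same second-order ODE in the variable $u=\sin^2(\theta/4)$, and both recognize it as Euler's hypergeometric equation with the stated $a,b,c$, then fix the constants by the values at $u=0,1$. Your algebra is correct throughout (in particular $a+b=2(1-4/\kappa)$, $ab=-8\lambda/\kappa$, $1-c>0$ for $\kappa<8$, and the threshold $a=1\Leftrightarrow\lambda=1-\kappa/8$).

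The one genuine difference is in how the answer is extracted. You propose to characterize $h(\theta)=\E_\theta[e^{\lambda T}]$ directly as the solution of a boundary value problem via Feynman--Kac, which requires assuming $\E_\theta[e^{\lambda T}]<\infty$ and uniform integrability of $M_{t\wedge T}$ a priori, and you flag this yourself as the obstacle. The paper sidesteps that logical dependency by not working with $h$ at all: it shows $e^{\lambda t}f(\sin^2(\theta_t/4))$ and $e^{\lambda t}g(\sin^2(\theta_t/4))$ are (local) martingales up to $T$ and applies optional stopping to each separately, producing the two linear relations
\[
\E\big[e^{\lambda T}1_{\{\theta_T=0\}}\big]+f(1)\,\E\big[e^{\lambda T}1_{\{\theta_T=2\pi\}}\big]=f(u),\qquad
g(1)\,\E\big[e^{\lambda T}1_{\{\theta_T=2\pi\}}\big]=g(u),
\]
whose sum after elimination is (\ref{eqn::sle_capacity_laplace}); finiteness of $\E[e^{\lambda T}]$ then drops out of the second relation rather than being assumed. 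This reorganization is a small but real advantage: the uniform integrability still has to be justified (the paper invokes boundedness of $f,g$ on $[0,1]$ together with $\E[T]<\infty$), but it avoids the circularity of needing $\E[e^{\lambda T}]<\infty$ before you can apply Feynman--Kac. Your proposed fix -- a tail estimate for $T$ from the Bessel-type behavior of $\theta_t$ near $\{0,2\pi\}$ -- is plausible but more work than the paper's two-martingale bookkeeping, and you would additionally need a sign or uniform-integrability argument, since $f(1)$ can be negative and boundedness of $\tilde h$ on $[0,1]$ alone does not give UI of $e^{\lambda(t\wedge T)}\tilde h(\theta_{t\wedge T})$ because the time factor is unbounded.
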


\begin{proof}
First, let us check the values of the functions $f$ and $g$ at the end points $u=0$ or $u=1$. Since $\kappa\in [0,8)$, $\lambda \in (0,1-\kappa/8)$ and $c\not\in\Z$, we have that 
\[a\in(0,1),\quad b\in (1-8/\kappa, 0),\quad c\in (-\infty, 1)\setminus\Z.\]
Combining with \cite[Page 104, Equation (46)]{emot}, we have that (denoting by $\Gamma$ the Gamma function)
\begin{align*}
f(0)=1,\quad f(1)&=\frac{\Gamma(c)\Gamma(c-a-b)}{\Gamma(c-a)\Gamma(c-b)}=\frac{\cos\left(\pi\sqrt{\left(1-\frac{4}{\kappa}\right)^2+\frac{8\lambda}{\kappa}}\right)}{\cos\left(\pi \left(1-\frac{4}{\kappa}\right)\right)}\in [-1, 1);\\
g(0)=0,\quad g(1)&=\frac{\Gamma(2-c)\Gamma(1-c)}{\Gamma(1-a)\Gamma(1-b)}\in (0,\infty).
\end{align*}
\smallbreak
Second, assuming the same notation as in Subsection \ref{sec::sle}, we know that $\gamma$ has the same law as radial $\SLE_{\kappa}(\kappa-6)$ with $(W_0, V_0)=(x, y)$. Let $\theta_t=\arg(W_t)-\arg(V_t)$, 
we will argue that $e^{\lambda t}f(\sin^2(\theta_t/4))$ and $e^{\lambda t}g(\sin^2(\theta_t/4))$ are martingales up to the collision time $T$.  
Suppose that $F$ is an analytic function defined on $(0,1)$. By the SDE (\ref{eqn::radial_sle_theta}) and It\^o's formula, we know that $e^{\lambda t}F(\sin^2(\theta_t/4))$ is a local martingale if and only if 
\[\lambda F(u)+\frac{3\kappa-8}{16}(1-2u)F'(u)+\frac{\kappa}{8}u(1-u)F''(u)=0.\]
Since $f$ and $g$ are solutions to this ODE, we know that $e^{\lambda t}f(\sin^2(\theta_t/4))$ and $e^{\lambda t}g(\sin^2(\theta_t/4))$ are local martingales. Since $f$ and $g$ are finite at end points $u=0$ and $u=1$, and the collision time $T$ has finite expectation, we may conclude that the processes $e^{\lambda t}f(\sin^2(\theta_t/4))$ and $e^{\lambda t}g(\sin^2(\theta_t/4))$ are martingales up to $T$.
\smallbreak
Finally, we derive Equation (\ref{eqn::sle_capacity_laplace}). Since $e^{\lambda t}f(\sin^2(\theta_t/4))$ and $e^{\lambda t}g(\sin^2(\theta_t/4))$ are martingales up to $T$ which has finite expectation, Optional Stopping Theorem gives 
\begin{align*}
\E\left[\exp(\lambda T)1_{\{\theta_T=0\}}\right]+&f(1)\E\left[\exp(\lambda T)1_{\{\theta_T=2\pi\}}\right]=f(u);\\
&g(1)\E\left[\exp(\lambda T)1_{\{\theta_T=2\pi\}}\right]=g(u).
\end{align*}
These give Equation (\ref{eqn::sle_capacity_laplace}) by noting that $\capa(\gamma)=T$.
\end{proof}

\begin{remark} The martingale $e^{\lambda t}f(\sin^2(\theta_t/4))$ in the proof of Proposition \ref{prop::sle_capacity_laplace} was studied in \cite{SchrammSheffieldWilsonConformalRadii}. 
\end{remark}

\begin{remark} \label{rem::sle_continuity}
For $\kappa\in [0,8)$, suppose that $\gamma$ is a chordal $\SLE_{\kappa}$ in $\U$ with distinct end points $x,y\in\partial \U$. For $\lambda\in\R$, define 
\[F(\kappa,\lambda)=\E[\exp(\lambda\capa(\gamma))].\]
On the one hand, by Proposition \ref{prop::sle_capacity_laplace}, we see that when $c=3/2-4/\kappa\not\in\Z$ and $\lambda<1-\kappa/8$, the quantity $F(\kappa,\lambda)$ is finite. On the other hand, we know that $F(\kappa,\lambda)$ is continuous in $(\kappa,\lambda)$ (for the continuity in $\kappa$; see for instance \cite[Theorem 1.10]{KemppainenSmirnovRandomCurves}). Therefore, the quantity $F(\kappa,\lambda)$ is finite for all $\kappa\in [0,8)$ and $\lambda<1-\kappa/8$.
\end{remark}

Since the boundary Poisson kernel $H_{\U}(x,y)$ blows up when $\theta=\arg(x)-\arg(y)$ is small, in order to understand the excursion measure $\mu_{\U, \kappa}$, it will be important for us how the capacity $\capa(\gamma^\theta)$ behaves as $\theta\to 0$. 

\begin{proposition}\label{prop::cap_small_u}
Fix $\kappa\in [0,8)$ such that $c(\kappa)=\frac{3}{2}-\frac{4}{\kappa} \not\in\Z$, and $\lambda\in (0,1-\kappa/8)$. As $\theta\to 0$, or equivalently, $u\to 0$, the Laplace transform of the capacity satisfies
\begin{equation}\label{eq::small_u_Lap}
\E\big[\exp(\lambda\capa(\gamma^\theta))-1\big]\asymp 
\left\{
\begin{tabular}{lll}
$u^{1-c}$&$\asymp \theta^{2(1-c)}$ &\text{if}$\quad 8/3 < \kappa < 8$\,,\\
$u$ &$\asymp \theta^2$  &\text{if}$\quad 0< \kappa < 8/3,\  c\not\in\Z_-$\,,
\end{tabular}
\right.
\end{equation}
with the constant factors implicit in $\asymp$ depending on $\kappa$ and $\lambda$.
Quite similarly, the expectation itself satisfies
\begin{equation}\label{eq::small_u_E}
\E\big[\capa(\gamma^\theta)\big]\asymp 
\left\{
\begin{tabular}{lll}
$u^{1-c}$ &$\asymp \theta^{2(1-c)}$ &if $\quad 8/3 < \kappa < 8$\,,\\
$u\log (1/u)$ &$\asymp \theta^2\log(1/\theta)$  &\text{if}$\quad \kappa =8/3$\,,\\
$u$ &$\asymp \theta^2$  &\text{if}$\quad 0\leq \kappa < 8/3$\,,
\end{tabular}
\right.
\end{equation}
with the constant factors implicit in $\asymp$ depending on $\kappa$.
\end{proposition}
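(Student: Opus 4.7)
The plan is to read (\ref{eq::small_u_Lap}) off the exact Laplace formula of Proposition~\ref{prop::sle_capacity_laplace}, and to obtain (\ref{eq::small_u_E}) by solving directly the Poisson equation associated with the SDE~(\ref{eqn::radial_sle_theta}).

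For (\ref{eq::small_u_Lap}), I would plug the expansions $\phi(a,b;c;u)=1+\frac{ab}{c}u+O(u^{2})$ and $\phi(1+a-c,1+b-c;2-c;u)=1+O(u)$ into Proposition~\ref{prop::sle_capacity_laplace}, together with the identity $ab=-8\lambda/\kappa$ (immediate from $a+b=2(1-4/\kappa)$ and $(a-b)^{2}/4=(1-4/\kappa)^{2}+8\lambda/\kappa$), to obtain
\[\E\bigl[e^{\lambda\capa(\gamma^{\theta})}\bigr]-1=\frac{ab}{c}\,u+\frac{1-f(1)}{g(1)}\,u^{1-c}+O(u^{2})+O(u^{2-c}).\]
When $\kappa\in(8/3,8)$ one has $c\in(0,1)$, so $u^{1-c}$ dominates $u$; the $u$-coefficient $ab/c=-16\lambda/(3\kappa-8)$ is strictly negative for $\lambda>0$, and positivity of the left-hand side forces $(1-f(1))/g(1)>0$, yielding $\asymp u^{1-c}$. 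When $\kappa\in(0,8/3)$ with $c\notin\Z_{-}$, one has $c<0$, so $u$ dominates all other terms and its coefficient $-16\lambda/(3\kappa-8)$ is strictly positive, yielding $\asymp u$. Using $u=\sin^{2}(\theta/4)\asymp\theta^{2}$ as $\theta\to 0$ gives (\ref{eq::small_u_Lap}).

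For (\ref{eq::small_u_E}), differentiating the Laplace identity at $\lambda=0$ is awkward, because there $a$ or $b$ vanishes, $f\equiv 1$, and $1-f(1)=0$ simultaneously. I would instead write $H(u):=\E[\capa(\gamma^{\theta})]$ with $u=\sin^{2}(\theta/4)$ and use that $\capa(\gamma^{\theta})$ is the hitting time of $\{0,2\pi\}$ by the diffusion $\theta_{t}$ of (\ref{eqn::radial_sle_theta}); It\^o's formula then gives the Poisson equation
\[\frac{\kappa}{8}u(1-u)H''(u)+\frac{3\kappa-8}{16}(1-2u)H'(u)=-1,\qquad H(0)=H(1)=0,\]
while the symmetry $\theta\leftrightarrow 2\pi-\theta$ of the SDE (which corresponds to $u\leftrightarrow 1-u$) yields $H(u)=H(1-u)$ and hence $H'(1/2)=0$. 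Multiplying by the integrating factor $[u(1-u)]^{c-1}$ and integrating from $u$ to $1/2$ gives
\[H'(u)=\frac{8}{\kappa}\,[u(1-u)]^{-c}\int_{u}^{1/2}[v(1-v)]^{c-1}\,dv,\qquad 0<u<1/2.\]
Analyzing the inner integral as $u\to 0$ according to the sign of $c=3/2-4/\kappa$ produces the three regimes: for $c>0$ it tends to the finite positive constant $\int_{0}^{1/2}[v(1-v)]^{c-1}\,dv$, so $H'(u)\asymp u^{-c}$ and $H(u)\asymp u^{1-c}$; for $c<0$ it is $\asymp u^{c}/|c|$, so $H'(u)$ stays bounded away from $0$ and $H(u)\asymp u$; and for $c=0$ (i.e.\ $\kappa=8/3$) it equals $\log(1/(2u))+\log 2\asymp\log(1/u)$, so $H(u)\asymp u\log(1/u)$. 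Using $u\asymp\theta^{2}$ then recovers (\ref{eq::small_u_E}).

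The main obstacle is the strict positivity of $(1-f(1))/g(1)$ in the $\kappa\in(8/3,8)$ Laplace case, without which one cannot match the lower bound. The soft positivity argument above resolves it, but it can also be checked directly from the Gamma-function expressions in Proposition~\ref{prop::sle_capacity_laplace}: $g(1)$ is manifestly positive, and a case analysis of where $\pi\sqrt{(1-4/\kappa)^{2}+8\lambda/\kappa}$ lies relative to $\pi|1-4/\kappa|\in(0,\pi/2)$ shows that the numerator of $f(1)$ is strictly less than its denominator throughout $\lambda\in(0,1-\kappa/8)$.
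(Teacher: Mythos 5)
Your proof of~(\ref{eq::small_u_Lap}) follows essentially the paper's route: expand the hypergeometric functions in Proposition~\ref{prop::sle_capacity_laplace} around $u=0$ and compare the exponents. (Your soft positivity argument for $(1-f(1))/g(1)$ is not strictly needed, since the paper's proof of Proposition~\ref{prop::sle_capacity_laplace} already records $f(1)\in[-1,1)$ and $g(1)\in(0,\infty)$ via the Gamma-function identities; but it is a correct and illuminating cross-check.) Your proof of~(\ref{eq::small_u_E}) is genuinely different from the paper's. The paper explicitly considers and dismisses the idea of differentiating the Laplace formula at $\lambda=0$, and instead proves~(\ref{eq::small_u_E}) by a probabilistic-geometric route: the Alberts--Kozdron estimate $\P[\diam(\gamma^\theta)>r]\asymp(\theta/r)^{8/\kappa-1}$, a dyadic decomposition by diameter scale, and a one-point estimate from Rohde--Schramm to control the contribution from large capacity. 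Your approach instead solves the Poisson equation $\frac{\kappa}{8}u(1-u)H''+\frac{3\kappa-8}{16}(1-2u)H'=-1$ for the hitting time of the diffusion~(\ref{eqn::radial_sle_theta}); after rewriting as $(u(1-u))^{c-1}$ times an exact derivative and integrating from $u$ to $1/2$ (using the symmetry $H(u)=H(1-u)$, hence $H'(1/2)=0$), the three regimes fall out from whether $c$ is positive, zero, or negative, and your calculation of the boundary integrals is correct. This is in a sense the ``differentiate at $\lambda=0$'' strategy done right --- you differentiate the generator, not the explicit solution formula --- and it has the additional advantage of not needing $c\not\in\Z$, which the paper explicitly lists as a drawback of the hypergeometric formula. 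The small residual gaps are standard but worth noting: to apply It\^{o} rigorously you should solve the ODE first and identify the solution with $\E_\theta[T]$ by a bounded-martingale plus optional-stopping argument (using $\E[T]<\infty$ from Section~\ref{sec::sle}); and the case $\kappa=0$ is degenerate (the diffusion is deterministic and the equation becomes first order), so it should be treated directly --- which is easy, as $H'(u)=2/(1-2u)$ gives $H(u)\asymp u$. In exchange for these caveats, your proof is self-contained and purely analytic, whereas the paper's proof imports two nontrivial SLE estimates but has the benefit of a transparent geometric picture of where the capacity comes from.
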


\begin{proof}
For the functions given in Proposition~\ref{prop::sle_capacity_laplace} for the case $c\not\in\Z$, it is easy to check that 
$f(u)-1=f(u)-f(0)$ decays like $u$ as $u\to 0$ and $g(u)$ decays like $u^{1-c}$ as $u\to 0$. This gives~(\ref{eq::small_u_Lap}), with a phase transition at $c=0$, that is, at $\kappa=8/3$. 

To get the expectation~(\ref{eq::small_u_E}), one possibility would be to take the derivative of the Laplace transform $F(\kappa,\lambda)$ at $\lambda=0$. However, our formula~(\ref{eqn::sle_capacity_laplace}) is not particularly simple, hence this task is not  obvious. Another approach could be to argue that, for small $\theta$, it is very likely that $\capa(\gamma^\theta)$ is also small,  hence $\exp(\lambda\capa(\gamma^\theta))-1$ and $\lambda\capa(\gamma^\theta)$ are likely to be close to each other, and hence it is not surprising if the $u\to 0$ asymptotics of their expectations are the same. However, a large portion of the expectations might come from when $\capa(\gamma^\theta)$ is large, hence this argument would also need some extra work. Finally, we have~(\ref{eqn::sle_capacity_laplace}) and~(\ref{eq::small_u_Lap}) only when $c(\kappa)\not\in\Z$, which is an immediate drawback to start with. Therefore, we give the following separate and direct argument.

For $\kappa\in (0,8)$ and $\theta < r < 1/4$, it follows immediately from the results of \cite{AlbertsKozdronIntersectionProbaSLEBoundary} that
\begin{equation}\label{eq::AKdiam}
\P[ \diam(\gamma^\theta) > r ] \asymp (\theta/r)^{8/\kappa-1}.
\end{equation}
When the diameter is in $[r,2r)$, the capacity is at most $C r^2$. Moreover, if a curve $\gamma^\theta$ going from $\exp(-i\theta/2)$ to $\exp(i\theta/2)$ has diameter in $[r,2r)$, and it also separates the center $0$ from the point $1-r/2$, then its  capacity is at least $c r^2$. For $\SLE_\kappa$ from $\exp(-i\theta/2)$ to $\exp(i\theta/2)$, conditioned to have diameter in $[r,2r)$, this separating event has a uniformly positive probability, hence~(\ref{eq::AKdiam}) implies that
\begin{equation}\label{eq::AKcap}
\E\big[ \capa(\gamma^\theta) 1_{\left\{\diam(\gamma^\theta) \in [2^k\theta,2^{k+1}\theta)\right\}} \big] \asymp (2^k\theta)^2 (2^{-k})^{8/\kappa-1} = \theta^2 (2^k)^{3-8/\kappa}.
\end{equation}
Summing this up over dyadic scales from around $\theta$ to around $1/4$, we get that 
\begin{equation*}
\E\big[ \capa(\gamma^\theta) 1_{\left\{\capa(\gamma^\theta) < 1\right\}} \big] \asymp 
\begin{cases} 
\theta^{8/\kappa-1} &\text{if}\quad \kappa > 8/3,\\
\theta^2\log(1/\theta) & \text{if}\quad \kappa=8/3,\\
\theta^2 &\text{if}\quad \kappa < 8/3.
\end{cases}
\end{equation*}
Note that the last line holds  even for $\kappa=0$.

Now, $\capa(\gamma^\theta)$ is larger than $t \geq 1$ only if $\gamma^\theta$ comes closer than $\exp(-t)$ to 0. The probability of this has an exponential tail by the one-point estimate in the dimension upper bound \cite[Lemma 6.3, Theorem 8.1]{RohdeSchrammSLEBasicProperty}, so this part of the probability space does not raise the expectation $\E[\capa(\gamma^\theta)]$ by more than a factor, and the proof of~(\ref{eq::small_u_E}) is complete.
\end{proof}

\subsection{The Laplace transform of the accumulated capacity}\label{sec::accumulated_capacity_laplace}

For $\kappa\in [0,8)$, recall that $\mu_{\U, \kappa}$ is the SLE excursion measure defined in (\ref{eqn::sle_excursion_measure}). Let $(\gamma_t, t\ge 0)$ be a PPP with intensity $\mu_{\U,\kappa}$ and define the accumulated capacity in the same way as before:
\[X_t=\sum_{s<t}\capa(\gamma_s).\]
By Campbell's formula, we have that, for $\lambda\in\R$, 
\begin{equation}\label{eqn::accumulated_capacity_laplace}
\E[\exp(\lambda X_t)]=\exp\left(t\int(e^{\lambda\capa(\gamma)}-1)\mu_{\U,\kappa}[d\gamma]\right).
\end{equation}
In particular, the left hand side of (\ref{eqn::accumulated_capacity_laplace}) is finite if and only if the right hand side is finite. We will study the Laplace exponent
\begin{equation}\label{eqn::accumulated_capacity_laplace_exponent}
\Lambda_{\kappa}(\lambda):=\int(e^{\lambda\capa(\gamma)}-1)\mu_{\U,\kappa}[d\gamma]=\int\int dxdy H_{\U}(x,y)\mu_{\U, \kappa}^{\#}(x,y)\left[e^{\lambda\capa(\gamma)}-1\right].
\end{equation}

\begin{proposition}\label{prop::accumulated_capacity_laplace_exponent}\ 
\begin{enumerate}
\item[(1)] 
When $\kappa\in [0,4)$, the Laplace exponent $\Lambda_{\kappa}(\lambda)$ is finite for $\lambda\in (0,1-\kappa/8)$ and infinite for $\lambda\geq 1-\kappa/8$. If $\kappa\geq 4$, then $\Lambda_{\kappa}(\lambda)$ is infinite for all $\lambda>0$.
\item[(2)] When $\kappa\in [0,4)$, we have that $\E[X_t]<\infty$ for all $t$. When $\kappa\ge 4$, we have that $\E[X_t]=\infty$ for all $t>0$.
\end{enumerate}
\end{proposition}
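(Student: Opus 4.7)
By the rotational symmetry of both $H_\U$ and chordal $\SLE_\kappa$, parametrizing boundary points as $x=e^{i\alpha}$, $y=e^{i\beta}$, substituting $\theta=\alpha-\beta$, and using (\ref{eqn::poisson_kernel_estimate}), the double integral defining $\Lambda_\kappa(\lambda)$ collapses to
\[\Lambda_\kappa(\lambda)=\frac{1}{2}\int_0^{2\pi}\frac{\E\bigl[\exp(\lambda\capa(\gamma^\theta))-1\bigr]}{\sin^2(\theta/2)}\,d\theta,\]
where $\gamma^\theta$ is a chordal $\SLE_\kappa$ in $\U$ between two boundary points at angular separation $\theta$. On any $[\eta,\pi]$ with $\eta>0$ the numerator is bounded (by Proposition \ref{prop::sle_capacity_laplace} together with Remark \ref{rem::sle_continuity}, for $\lambda<1-\kappa/8$) and the denominator is bounded below, and by the $\theta\leftrightarrow 2\pi-\theta$ symmetry, integrability of $\Lambda_\kappa(\lambda)$ is governed entirely by the behavior as $\theta\to 0^+$.

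Set $c:=3/2-4/\kappa$ and apply the $u\to 0$ asymptotics of Proposition \ref{prop::cap_small_u}. For $\kappa\in[0,8/3)$ one has $\E[\exp(\lambda\capa(\gamma^\theta))-1]\asymp\theta^2$, so the integrand is bounded near $0$. For $\kappa\in(8/3,4)$ (where $c\in(0,1/2)$, so $c\notin\Z$), $\E[\exp(\lambda\capa(\gamma^\theta))-1]\asymp\theta^{2(1-c)}$, hence the integrand is $\asymp\theta^{-2c}=\theta^{8/\kappa-3}$, locally integrable near $0$ iff $8/\kappa-3>-1$, i.e., $\kappa<4$. The countably many special values $\kappa\in(0,8/3]$ with $c\in\Z_{\leq 0}$ (notably $\kappa=8/3$) can be handled either via continuity in $\kappa$ (Remark \ref{rem::sle_continuity}), or via the elementary inequality $e^{\lambda x}-1\leq\lambda xe^{\lambda x}$ combined with the exponential tail of $\capa(\gamma^\theta)$ discussed below, which reduces the question to the $\E[\capa(\gamma^\theta)]$ asymptotic of Proposition \ref{prop::cap_small_u}, valid for all $\kappa\in[0,8)$. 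This yields the finiteness half of (1).

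For the divergence statements, first suppose $\lambda\geq 1-\kappa/8$. The interior one-point estimate for $\SLE_\kappa$ (see \cite{RohdeSchrammSLEBasicProperty}, or the $\diam$-based argument appearing in the proof of Proposition \ref{prop::cap_small_u}), $\P[\dist(\gamma^\theta,0)<\epsilon]\asymp\epsilon^{1-\kappa/8}$ as $\epsilon\to 0$, together with (\ref{eqn::koebe_quarter}), gives $\P[\capa(\gamma^\theta)>t]\asymp e^{-(1-\kappa/8)t}$ for large $t$; therefore $\E[\exp(\lambda\capa(\gamma^\theta))]=\infty$ at every fixed $\theta$, and $\Lambda_\kappa(\lambda)=\infty$. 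This already covers all $\kappa\geq 8$. For the remaining $\kappa\in[4,8)$ and $\lambda\in(0,1-\kappa/8)$ (where $c\in[1/2,1)$, so $c\notin\Z$), use the \emph{lower} half of Proposition \ref{prop::cap_small_u}, $\E[\exp(\lambda\capa(\gamma^\theta))-1]\gtrsim\theta^{8/\kappa-1}$: the integrand is $\gtrsim\theta^{8/\kappa-3}$ with $8/\kappa-3\leq-1$, which is non-integrable near $0$. This finishes (1).

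For (2), Campbell's formula gives $\E[X_t]=t\iint H_\U(x,y)\,\E_{x,y}[\capa(\gamma)]\,dx\,dy$, and the same rotational reduction and small-$\theta$ analysis apply, now using the $\E[\capa(\gamma^\theta)]$ asymptotics of Proposition \ref{prop::cap_small_u}. These produce the identical threshold $\kappa=4$, with the additional $\log(1/\theta)$ factor at $\kappa=8/3$ remaining locally integrable. The main technical obstacle throughout is the matching \emph{lower} bound on $\E[\exp(\lambda\capa(\gamma^\theta))-1]$ needed to force divergence when $\kappa\geq 4$; without it one would only get one direction of the dichotomy. Fortunately, this is exactly the interior one-point estimate for $\SLE_\kappa$ that already underlies Proposition \ref{prop::cap_small_u}.
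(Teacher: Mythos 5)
Your proof is correct and follows essentially the same route as the paper's: rotational reduction of the double integral to a one-dimensional $\theta$-integral, application of the small-$\theta$ asymptotics of Proposition~\ref{prop::cap_small_u}, the observation that the resulting singularity $\theta^{-2c}$ is integrable exactly when $c<1/2$, i.e.\ $\kappa<4$, and the continuity argument (Remark~\ref{rem::sle_continuity}) to cover $\kappa\in\{8/3,8/5,\dots\}$. Part~(2) is handled identically to the paper, by Campbell's formula and~(\ref{eq::small_u_E}). The one genuine difference is in the argument that $\Lambda_\kappa(\lambda)=\infty$ when $\lambda\ge 1-\kappa/8$: the paper invokes the blow-up of the closed-form expression~(\ref{eqn::sle_capacity_laplace}) as $\lambda\uparrow 1-\kappa/8$ (which traces back to $g(1)\to 0$ through $\Gamma(1-a)$), whereas you go directly through the interior one-point estimate and Koebe's theorem to get the two-sided tail $\P[\capa(\gamma^\theta)>t]\asymp e^{-(1-\kappa/8)t}$, which makes the inner expectation $\E[\exp(\lambda\capa(\gamma^\theta))]$ already infinite for every fixed $\theta$. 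Your version is a bit more self-contained (it doesn't require parsing the hypergeometric-function formula at the critical $\lambda$) and it cleanly covers $\kappa\geq 8$ where $1-\kappa/8\le 0$, though this regime is implicitly excluded in the paper's setup; the paper's version is shorter given that~(\ref{eqn::sle_capacity_laplace}) has already been established. Your sketched alternative for the special $\kappa$ values via $e^{\lambda x}-1\le\lambda x e^{\lambda x}$ would need a bit more care (e.g.\ a H\"older step to control $\E[\capa\cdot e^{\lambda\capa}]$), but since you fall back on continuity as in the paper, that is not a gap.
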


\begin{proof}
First, we show that $\Lambda_{\kappa}(\lambda)$ is finite when $\kappa\in [0,4)$, $\lambda\in (0,1-\kappa/8)$, and  $c:=3/2-4/\kappa\not\in\Z$. Note that, in (\ref{eqn::accumulated_capacity_laplace_exponent}), the boundary Poisson kernel and the expectation $\mu_{\U, \kappa}^{\#}(x,y)\left[e^{\lambda\capa(\gamma)}-1\right]$ only depend on the angle difference of $x,y$. Assuming the same notation as in Proposition~\ref{prop::sle_capacity_laplace}, we see that 
\begin{align*}
\Lambda_{\kappa}(\lambda)&=4\pi\int_0^{\pi}\frac{d\theta}{4\pi \sin^2(\theta/2)}\mu_{\U, \kappa}^{\#}(1, e^{i\theta})\left[e^{\lambda\capa(\gamma)}-1\right]\tag{by (\ref{eqn::poisson_kernel_estimate})}\\
&=\int_0^{\pi}\frac{d\theta}{\sin^2(\theta/2)}\left(f(\sin^2(\theta/4))-1+\frac{1-f(1)}{g(1)}g(\sin^2(\theta/4))\right)\tag{by (\ref{eqn::sle_capacity_laplace})}\\
&=\frac{1}{2}\int_0^{1/2}du\ u^{-3/2}(1-u)^{-3/2}\left(f(u)-1+\frac{1-f(1)}{g(1)}g(u)\right)\tag{set $u=\sin^2(\theta/4)$}.
\end{align*}
Using the decay rate (\ref{eq::small_u_Lap}) of Proposition~\ref{prop::cap_small_u}, the exponent $\Lambda_{\kappa}(\lambda)$ is finite for $\lambda\in (0,1-\kappa/8)$ when $c<1/2$ which is to say $\kappa<4$, and infinite for every $\lambda>0$ when $\kappa\geq 4$.

That is, $\Lambda_{\kappa}(\lambda)$ is finite  for $\kappa\in [0,4)\setminus\{8/3, 8/5, 8/7, ....\}$ and $\lambda\in (0,1-\kappa/8)$. It is infinite when $\lambda\ge 1-\kappa/8$, since already the integrand, the right hand side of (\ref{eqn::sle_capacity_laplace}),  explodes.

To extend this for $\kappa\in\{8/3, 8/5, 8/7, ....\}$, the continuity of $\kappa\mapsto F^\theta(\kappa,\lambda)$ mentioned in Remark \ref{rem::sle_continuity} implies that the singularity in the integrand can be bounded from above by something integrable when $\lambda < 1-\kappa/8$, and from below by something non-integrable when $\lambda\ge 1-\kappa/8$.

For part (2) regarding $\E[X_t]$, we can do the analogous calculation, just using the decay rate~(\ref{eq::small_u_E}) instead of~(\ref{eq::small_u_Lap}).
\end{proof}

\begin{proof}[Proof of Proposition \ref{prop::accumulated_capacity_finite}]
When $\kappa\in [0,4)$, we proved in Proposition \ref{prop::accumulated_capacity_laplace_exponent} that $\E[X_t]$ is finite, and thus the accumulated capacity $X_t$ is finite almost surely.

Next, we will argue that $X_t$ diverges almost surely when $t>0, \kappa\ge 4$. For $k\ge 1$, define $M_k$ to be the number of excursions $\gamma_s$ with $s<t$ such that $2^{-k}\le \capa(\gamma_s)<2^{-k+1}$. Since $(\gamma_s, s\ge 0)$ is a PPP with intensity $\mu_{\U, \kappa}$, we know that $M_k$ is a Poisson random variable with parameter
\[q_k:=t\mu_{\U, \kappa}[\gamma: 2^{-k}\le \capa(\gamma)<2^{-k+1}].\]
By part (2) of Proposition~\ref{prop::accumulated_capacity_laplace_exponent} , we have $\E[X_t]=\infty$ for $\kappa\ge 4$, thus
\[\sum_{k\ge 1}2^{-k}q_k\ge \E[X_t]/2=\infty.\]
Since $(M_k, k\ge 1)$ are independent Poisson random variables, we have 
\[\E\left[\exp\left(-\sum_{k\ge 1}2^{-k}M_k\right)\right]=\Pi_{k\ge 1}\E\left[\exp\left(-2^{-k}M_k\right)\right]=\Pi_{k\ge 1}\exp\left(-q_k\left(1-e^{-2^{-k}}\right)\right)=0.\]
Therefore, when $\kappa\ge 4$, we almost surely have
\[X_t\ge \sum_{k\ge 1}2^{-k}M_k=\infty.\]
\end{proof}

\subsection{Extremes of the conformal radii}

Fix $\kappa\in [0,4)$, by Proposition \ref{prop::accumulated_capacity_laplace_exponent}, we know that the Laplace exponent $\Lambda_{\kappa}(\lambda)$ is finite for $\lambda\in (-\infty, 1-\kappa/8)$. In particular, this implies that it is differentiable on $(-\infty, 1-\kappa/8)$ (see \cite[Lemma 2.2.5]{DemboZeitouniLargeDeviations}). Moreover, by Strong Law of Large Numbers for subordinators (\cite[Page 92]{BertoinLevyProcesses}), we have almost surely that
\[\lim_{t\to\infty}\frac{X_t}{t}=\Lambda_{\kappa}'(0)\in (0,\infty),\]
which implies (\ref{eqn::cr_decay_normal}). To prove Theorem  \ref{thm::abnormal_decay_upper}, we first summarize some basic properties of $\Lambda_{\kappa}(\lambda)$ and $\Lambda_{\kappa}^*(x)$ (see \cite[Lemmas 2.2.5, 2.2.20]{DemboZeitouniLargeDeviations}, and our Figure~\ref{f.Lambda} in the Introduction):
\begin{enumerate}
\item [(1)] The Laplace exponent $\Lambda_{\kappa}(\lambda)$ is convex and smooth on $(-\infty, 1-\kappa/8)$.
\item [(2)] The Fenchel-Legendre transform $\Lambda_{\kappa}^*(x)$ is non-negative, convex, and smooth on $(0,\infty)$.
\item [(3)] We have $\Lambda_{\kappa}^*(x)=0$ when $x=\Lambda_{\kappa}'(0)$, the function $\Lambda_{\kappa}^*$ is increasing on $(\Lambda_{\kappa}'(0), \infty)$ and is decreasing on $(0, \Lambda_{\kappa}'(0))$.
\item [(4)] Since $\Lambda_{\kappa}(\lambda)\to-\infty$ as $\lambda\to-\infty$, we know that 
\[\Lambda_{\kappa}^*(0)=+\infty,\quad \Lambda_{\kappa}^*(x)\uparrow+\infty \text{ as }x\downarrow 0.\]
\item [(5)] As $x\to +\infty$, we have 
\[\lim_{x\to\infty}\frac{\Lambda_{\kappa}^*(x)}{x}=1-\kappa/8.\]
\end{enumerate}

Recall that $\alpha_{min}$ is defined through (\ref{eqn::alpha_min}). From the above properties, we know that $2x-\Lambda_{\kappa}^*(x)=0$ has a unique solution which is equal to $\alpha_{min}\in (0, \Lambda_{\kappa}'(0))$. We can complete the proof of Theorem~\ref{thm::abnormal_decay_upper} using the theory of large deviations:
 
\begin{proof}[Proof of Theorem \ref{thm::abnormal_decay_upper}]
It suffices to give upper bound for $\Theta(\alpha)\cap B(0, 1-\delta)$ for any $\delta>0$. Fix $\alpha\ge 0$ and assume $\beta>\alpha$. For $n\ge 1$, let $\LU_n$ be the collection of open balls with centers in $e^{-n\beta}\Z^2\cap B(0, 1-\delta/2)$ and radius $e^{-n\beta }$. For each ball $U\in \LU_n$, denote by $z(U)$ the center of $U$. Define, for $u^-<u^+$,
\[\LU_n(u^-, u^+)=\left\{U\in\LU_n: u^-\le\frac{-\log \CR\left(z(U); D_n^{z(U)}\right)}{n}\le u^+\right\}.\]
By Cram\'er's theorem (see \cite[Theorem 2.2.3]{DemboZeitouniLargeDeviations}), for $u^-<u^+$, for any $U\in\LU_n$, we have 
\begin{align}\label{eqn::large_deviation}
\PP[U\in\LU_n(u^-, u^+)]&=\PP\left[u^-n\le -\log \CR\left(z(U); D_n^{z(U)}\right)\le u^+n\right]\notag\\
&\le \exp\left(-n\left(\inf_{u^-\le u\le u^+}\Lambda_{\kappa}^*(u)+o(1)\right)\right),
\end{align}
where the $o(1)$ term tends to zero as $n\to\infty$ uniformly in $U$. 
Define 
\[\LC_m(u^-, u^+)=\cup_{n\ge m}\LU_n(u^-, u^+).\]
We claim that $\LC_m(\alpha^-, \alpha^+)$ is a cover for $\Theta(\alpha)\cap B(0,1-\delta)$ for any $\alpha^-<\alpha<\alpha^+<\beta$ and any $m\ge 1$. Pick $\tilde{\alpha}^-\in (\alpha^-, \alpha), \tilde{\alpha}^+\in (\alpha, \alpha^+)$. For any $z\in\Theta(\alpha)\cap B(0,1-\delta)$, since $\lim (-\log \CR(z; D^z_n))/n =\alpha$, we have that, for $n$ large enough,
\[\exp(-n\tilde{\alpha}^-)\ge \CR(z; D_n^z)\ge \exp(-n\tilde{\alpha}^+).\]
Let $w$ be the point in $e^{-n\beta}\Z^2$ that is the closest to $z$ and denote by $U$ the ball in $\LU_n$ with center $w$. 
Since $\tilde{\alpha}^+<\beta$ and by (\ref{eqn::koebe_quarter}), we know that $w$ is contained in $D_n^z$. Moreover, for $n$ large enough, by (\ref{eqn::koebe_quarter}) and that $\beta>\alpha^+>\tilde{\alpha}^+>\tilde{\alpha}^->\alpha^-$, we have 
\[
\CR(w;D_n^w)\ge \inrad(w; D_n)\ge \inrad(z; D_n)-e^{-n\beta}\ge \frac{1}{4}\CR(z; D_n)-e^{-n\beta}\ge \frac{1}{4}e^{-n\tilde{\alpha}^+ }-e^{-n\beta}\ge e^{-n\alpha^+}.\]
\[
\CR(w;D_n^w)\le 4\inrad(w; D_n)\le 4(\inrad(z; D_n)+e^{-n\beta})\le 4(\CR(z; D_n)+e^{-n\beta})\le 4(e^{-n\tilde{\alpha}^- }+e^{-n\beta})\le e^{-n\alpha^-}.\]
Therefore $z\in U\in\LU_n(\alpha^-, \alpha^+)$. This implies that $\LC_m(\alpha^-, \alpha^+)$ is a cover for $\Theta(\alpha)\cap B(0, 1-\delta)$. 
We use these covers to bound $s$-Hausdorff measure of $\Theta(\alpha)\cap B(0, 1-\delta)$. For $m\ge 1$, and $\alpha^-<\alpha<\alpha^+<\beta$, we have 
\begin{align*}
\E[\LH_s(\Theta(\alpha)\cap B(0, 1-\delta))]
&\le \E\left[\sum_{U\in\LC_m(\alpha^-, \alpha^+)}|\diam(U)|^s\right]\\
&\le \sum_{n\ge m} 
\exp(2n\beta)\times \exp(-sn\beta)\times \exp\left(-n\left(
\inf_{\alpha^-\le u\le\alpha^+}\Lambda_{\kappa}^*(u)+o(1)\right)\right)
\tag{By (\ref{eqn::large_deviation})}\\
&=\sum_{n\ge m}\exp \left(n \left(2\beta-s\beta-\inf_{\alpha^-\le u\le \alpha^+}\Lambda_{\kappa}^*(u)+o(1)\right)\right)
\end{align*}
If $s>2-\inf_{\alpha^-\le u\le \alpha^+}\Lambda_{\kappa}^*(u)/\beta$, then (taking $m\to\infty$) we have $\E[\LH_s(\Theta(\alpha)\cap B(0, 1-\delta))]=0$. This implies that 
\[2-\inf_{\alpha^-\le u\le \alpha^+}\Lambda_{\kappa}^*(u)/\beta\ge \dim(\Theta(\alpha)),\quad \text{almost surely}.\]
This holds for any $\beta>\alpha^+>\alpha>\alpha^-$, thus by the continuity of $\Lambda_{\kappa}^*$, we have
\[2-\Lambda_{\kappa}^*(\alpha)/\alpha\ge \dim(\Theta(\alpha)),\quad \text{almost surely}.\]

Finally, when $\alpha<\alpha_{min}$, we see that $\LH_0(\Theta(\alpha)\cap B(0,1-\delta))=0$ almost surely. This implies that $\Theta(\alpha)=\emptyset$ almost surely. 
\end{proof}

\section{Convergence to a limiting field}\label{s.field}

\subsection{Estimates on the disconnection time}\label{sec::disconnecting_time}

The following lemma is the key result of this section:

\begin{lemma}\label{lem::key_lemma}
Fix $\kappa\in [0,4)$. Let $(D_t^0, t\ge 0)$ be $\CGE_\kappa$ targeted at the origin. Then there exist constants $r_0\in (0,1)$, $p_0\in (0,1)$ and $t_0>0$ such that 
\[\PP\left[D_{t_0}^0\subset B(0, r_0)\right]\ge p_0.\]
\end{lemma}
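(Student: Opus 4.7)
The goal is to show that, with positive probability, $\overline{D_{t_0}^0}$ is compactly contained in $\U$, or equivalently that $\partial\U\cap\partial D_{t_0}^0=\emptyset$. In the iterated picture this translates to asking that $E_t:=\Psi_t(\partial\U\cap\partial D_t^0)\subset\partial\U$ be empty at some time $t_0<\infty$. A key observation is that the set $E_t$ is non-increasing in the following sense: each excursion can only cut pieces of $\partial\U$ away and never add them back, so a favorable configuration at some $s<t_0$ is preserved up to time $t_0$. The plan is to construct an explicit two-excursion favorable event.

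\emph{Stage 1 (a deep, narrow excursion).} For parameters $\delta>0$ small and $C>0$ large, the set of excursions in $\U$ whose endpoints are at angular distance in $(\delta/2,\delta)$ and whose capacity lies in $[C,2C]$ has positive $\mu_{\U,\kappa}$-mass: indeed, $H_\U(x,y)$ is positive and bounded away from $0$ on this angular range, and the Laplace transform computation of Proposition~\ref{prop::sle_capacity_laplace} (together with Remark~\ref{rem::sle_continuity}) ensures that $\mu^{\#}_{\U,\kappa}(x,y)$ gives positive weight to capacities in $[C,2C]$ for every $\kappa\in[0,4)$. By the PPP structure, such an excursion $\gamma_{s_1}$ occurs in $(0,t_0/2)$ with positive probability. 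On this event, a harmonic-measure computation in $U^0_{s_1}$ shows that the image $f_{s_1}(\partial\U\cap\partial U^0_{s_1})$ is an arc $I_1\subset\partial\U$ of length at most some $\eta(C)$ with $\eta(C)\to 0$ as $C\to\infty$: the large capacity of $\gamma_{s_1}$ from $0$ forces Brownian motion from $0$ to exit $U^0_{s_1}$ through $\gamma_{s_1}$ with high probability, so the harmonic measure of the 0-side arc is small, and by the conformal invariance of harmonic measure this is exactly $|I_1|/(2\pi)$.

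\emph{Stage 2 (a covering excursion).} Conditional on Stage 1 and the resulting arc $I_1$, I ask for an excursion $\gamma_{s_2}$ at some time $s_2\in(s_1,t_0)$ in the iterated picture whose non-$0$ arc contains $I_1$. The total $\mu_{\U,\kappa}$-mass of such configurations is bounded below by a positive constant (and is finite, since requiring $B\supset I_1$ keeps us away from the diagonal singularity of $H_\U$), so this happens with positive conditional probability. On the joint event, $E_{s_2}=\emptyset$, hence $E_{t_0}=\emptyset$, so $\overline{D_{t_0}^0}$ is compactly contained in $\U$. A compactness/tightness argument---possibly refining the good event to also demand, e.g., that the big excursion sits in a compact subfamily of configurations---lets us select deterministic $r_0\in(0,1)$ and $p_0>0$ with $\Pb{D_{t_0}^0\subset B(0,r_0)}\ge p_0$.

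\textbf{Main obstacle.} The principal technical step is the harmonic-measure bound $|I_1|\le\eta(C)$ of Stage~1, relating the capacity of a chordal $\SLE_\kappa$ excursion to the harmonic measure it carries from the target point; this uses the SLE martingale framework of Subsection~\ref{sec::sle} together with the capacity tail estimates coming from Proposition~\ref{prop::sle_capacity_laplace}. A secondary delicate point is that infinitely many small excursions arrive in $[0,t_0]$ in addition to $\gamma_{s_1}$ and $\gamma_{s_2}$; but by the monotonicity of $E_t$ their effect is only to further shrink the image of the original boundary, so the construction is genuinely robust to them.
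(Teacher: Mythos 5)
Your approach is genuinely different from the paper's, so let me first compare and then flag the gaps.

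\textbf{Comparison.} The paper does not shrink the remaining boundary at all. Instead it (i) observes softly, by compactness of $\partial D_u^0\cap\partial\U$ and rotation invariance, that \emph{some} fixed arc $I$ of length $\delta$ satisfies $I\cap\partial D_u^0=\emptyset$ with positive probability, which forces an already-present separating curve $I_T$ whose image $\Psi_T(I_T)$ has length $\ge\delta$ by harmonic-measure monotonicity, and then (ii) waits for a deep \emph{narrow} excursion $\gamma_T\in E(\eps_0)$ whose two endpoints land inside $\Psi_T(I_T)$, which finishes in one shot. You instead use the deep excursion in Stage~1 to shrink the harmonic measure of the remnant boundary, and then a shallow covering excursion to finish. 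Both constructions rest on the existence of deep excursions; they are just arranged in opposite order. Your Beurling-type harmonic-measure estimate --- which you flag as the main obstacle --- is in fact routine: if $\dist(0,\gamma)\le d$ then $\gamma$ radially spans $[d,1]$, so Beurling gives $\text{harm}_{U^0}(0,\partial\U)\lesssim\sqrt d\le 2e^{-\capa(\gamma)/2}$. The genuine difficulties in your argument lie elsewhere.

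\textbf{Gap 1 ($\kappa=0$).} The Stage~1 claim that ``$\mu^{\#}_{\U,\kappa}(x,y)$ gives positive weight to capacities in $[C,2C]$ for every $\kappa\in[0,4)$'' is false at $\kappa=0$: there $\gamma^\theta$ is the deterministic orthogonal circular arc, so $\capa(\gamma^\theta)$ is a deterministic bounded function of $\theta$ on $(\delta/2,\delta)$ and has \emph{no} mass on $[C,2C]$ once $C$ is large. The Laplace-transform finiteness controls the exponential tail when the capacity is random, but not the positivity of a density at fixed small $\theta$. The natural repair is to drop the ``narrow'' requirement and instead take endpoints at angular distance near $\pi$, where the $\SLE_0$ arc passes close to the origin; the harmonic-measure argument works equally well for broad excursions. (You might find it comforting that the paper's event class $E(\eps)$ also degenerates at $\kappa=0$, since a deterministic orthogonal arc never disconnects $0$ from the long arc; the $\kappa=0$ endpoint is delicate in both treatments.)

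\textbf{Gap 2 (monotonicity of $E_t$).} Write $J_t$ for a single arc of $\partial\U$ containing $E_t$. What is true is that $|E_t|$ is non-increasing (harmonic-measure monotonicity under $f_\gamma$) and that $E_t=\emptyset$ is absorbing. But Stage~2 needs $E_{s_2}$ to still lie in a single \emph{short} arc so that one chord can swallow it, and this can fail: if an intermediate excursion $\gamma$ has \emph{both} endpoints in $J_t$, then $E_{t+}$ splits into two pieces separated by $f_\gamma(\gamma)$, whose arc-length is $2\pi\,\text{harm}_{U^0_\gamma}(0,\gamma)$ --- close to $2\pi$ when $\gamma$ is deep. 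After one such event no single excursion's non-$0$ arc can contain $E_{s_2}$. The rate of such bad excursions (both endpoints confined to a short arc \emph{and} large capacity) is finite, so one can condition on none arriving before $s_2$, but this needs to be proved, not dismissed: the assertion that intermediate excursions ``only further shrink the image'' conflates Lebesgue measure with location. The paper avoids having to track the remnant boundary through repeated conformal pushforwards precisely by targeting the fixed separating curve $I_T$, whose image length is bounded below once and for all by the soft Stage~(i).

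In summary: the route is viable and offers an interesting alternative based on a Beurling estimate, but as written it needs to fix the $\kappa=0$ mass claim and to supply a quantitative argument controlling the spread of $E_t$ between $s_1$ and $s_2$.
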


%
%
\begin{proof}
Since the closure of $D_t^0$ is a compact subset of the unit disc, it is sufficient to show that there exist constants $p_0\in (0,1)$ and $t_0>0$ such that 
\begin{equation}\label{eqn::near_conclusion}
\PP[\partial\U\cap \partial D_{t_0}^0=\emptyset]\ge p_0.
\end{equation}

First, we argue that there exist $u, r, \delta>0$ such that for any arc $I\subset \partial\U$ with length less than $\delta$, we have 
\begin{equation}\label{eqn::def_delta_r}
\PP[I\cap \partial D_u^0=\emptyset]\ge r.
\end{equation}
Let $J$ be the collection of positive-length arcs of $\partial\U$ with both endpoints in $\{e^{i\theta}: \theta\in\QQ\}$. Fix any $u>0$; since $\partial D_u^0\cap\partial\U$ is a compact proper subset of $\partial\U$, we know that $\partial\U\setminus \partial D_u^0$ is a union of open arcs, thus 
\[\sum_{I\in J}\PP[I\cap \partial D_u^0=\emptyset]>0.\]
Thus there exists $I_0\in J$ such that $\delta:=|I_0|>0$ and 
\[r:=\PP[I_0\cap \partial D_u^0=\emptyset]>0.\]
Since $D_u^0$ is rotation invariant, we obtain (\ref{eqn::def_delta_r}).

For $\eps>0$, define $E(\eps)$ to be the collection of excursions $\gamma$ in $\U$ with the following property: if $x,y\in\partial\U$ are the two endpoints of $\gamma$, we require that the arc-length from $x$ to $y$ be less than $\eps$ and that $\gamma$ disconnect the origin from the arc from $y$ to $x$. Denote by $E(\gamma)$ the event that $\gamma$ has this property. A standard SLE calculation (see, for instance, \cite{SchrammPercolationFormula}) shows that there is a universal constant $C<\infty$ such that
\begin{align*}
q(\eps):=\mu[E(\eps)]&=\int\int_{|x-y|\le\eps} dxdy H_{\U}(x,y)\mu_{\U, \kappa}^{\#}(x,y)[E(\gamma)]\\
&\le C\int\int_{|x-y|\le\eps} dxdy |x-y|^{-2}|x-y|^{8/\kappa-1}\le C \eps^{8/\kappa-2}.
\end{align*}
In particular, $q(\eps)\to 0$ as $\eps\to 0$. Hence, with $u,r,\delta$ fixed above,  
we can choose $\eps_0\in (0,\delta/2)$ such that 
\begin{equation}\label{eqn::def_eps0}
e^{-uq(\eps_0)}\ge 1-r/2.
\end{equation}

Now let $(\gamma_t, t\ge 0)$ be a PPP with intensity $\mu_{\U, \kappa}$ and let $(D_t^0, t\ge 0)$ be the corresponding growth process targeted at the origin. Let $f_t, \Psi_t$ be the conformal maps defined in Section \ref{sec::growthprocess_construction}. 
Let $T=\inf\{t: \gamma_t\in E(\eps_0)\}$. We know that $T$ has exponential law with parameter $q(\eps_0)$. Fix some arc $I$ with length $\delta$. Conditioned on the set $(\gamma_s, s<T)$ and on the event $E_1=\{I\cap \partial D_T^0=\emptyset\}$, let $I_T$ be the connected component of $\partial D_T^0\setminus \partial \U$ that disconnects $I$ from the origin; see Figure~\ref{fig::explanation_keylemma}(a). Recall that $\Psi_T$ is the conformal map from $D_T^0$ onto $\U$ normalized at the origin. Consider the event $E_2$ that the two endpoints of $\gamma_T$ fall in $\Psi_T(I_T)$. Since $|\Psi_T(I_T)|\ge \delta$ and $\gamma_T\in E(\eps_0)$, we know that the probability of $E_2$ is at least $\delta/(4\pi)$. Conditioned on $(\gamma_s, s\le T)$ and on the event $E_1\cap E_2$, denoting $f_T\circ \Psi_T$ by $\Psi_{T+}$, we have that $\Psi_{T+}^{-1}(\U)\cap \partial
\U=\emptyset$; see Figure \ref{fig::explanation_keylemma}(b). Therefore, for $t>T$,
\[\PP\big[\partial\U\cap \partial D_t^0=\emptyset \bcond \sigma(\gamma_s, s<T), E_1\big]\ge \delta/(4\pi).\]
Thus
\[\PP[\partial\U\cap \partial D_t^0=\emptyset]\ge \PP[t>T, I\cap \partial D_T^0=\emptyset]\times \delta/(4\pi). \]
In order to show (\ref{eqn::near_conclusion}), we need to estimate $\PP[t>T, I\cap \partial D_T^0=\emptyset]$. We have
\begin{align*}
\PP[t>T, I\cap \partial D_T^0=\emptyset]&\ge \PP[t>T>u, I\cap \partial D_u^0=\emptyset]\\
&\ge \PP[t>T>u]-(1-r)\tag{By (\ref{eqn::def_delta_r})}\\
&=e^{-uq(\eps_0)}-e^{-tq(\eps_0)}-(1-r)\\
&\ge r/2-e^{-tq(\eps_0)}\tag{By (\ref{eqn::def_eps0})}.
\end{align*}
We choose $t_0>u$ large so that $e^{-t_0q(\eps_0)}\le r/4$. Then we have 
\[\PP[\partial \U\cap \partial D_{t_0}^0=\emptyset]\ge r\delta/(16\pi),\]
as desired.
\end{proof}

\begin{figure}[ht!]
\begin{subfigure}[b]{0.3\textwidth}
\begin{center}
\includegraphics[width=\textwidth]{\figdir/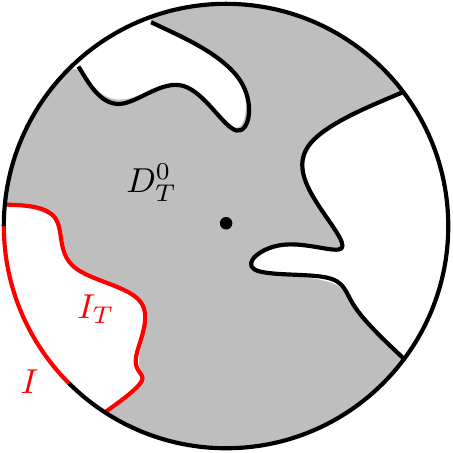}
\end{center}
\caption{Suppose that $I\cap\partial D_T^0=\emptyset$, and let $I_T$ be the connected component of $\partial D_T^0\setminus \partial \U$ that disconnects $I$ from the origin. }
\end{subfigure}
$\quad$
\begin{subfigure}[b]{0.67\textwidth}
\begin{center}
\includegraphics[width=\textwidth]{\figdir/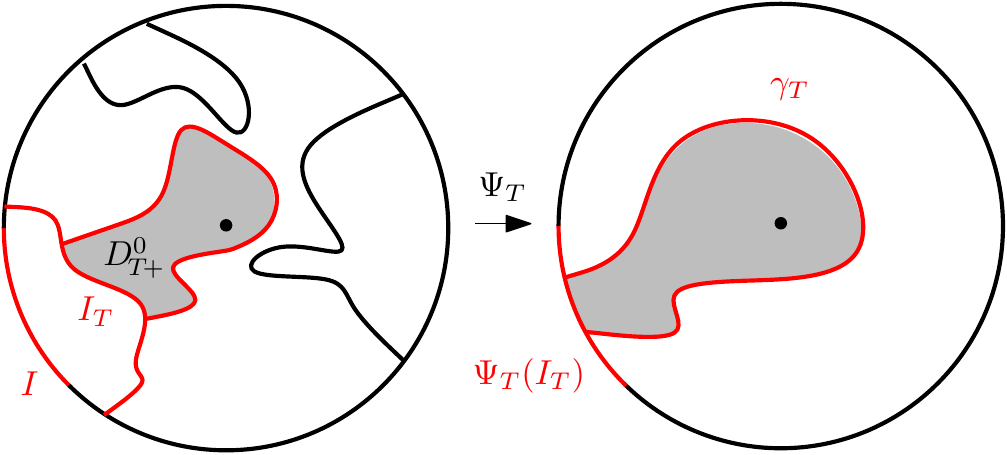}
\end{center}
\caption{Since $|I|\ge\delta$, the harmonic measure of $I_T$ in $D_T^0$ seen from the origin is at least $\delta/2\pi$, thus the arc $\Psi_T(I_T)$ has length at least $\delta$. Note that the distance between the two end points of $\gamma_T$ is less than $\eps_0\le \delta/2$. 
If the two end points of $\gamma_T$ fall in $\Psi_T(I_T)$, then $D_{T+}^0$ will be disjoint of $\partial\U$.}
\end{subfigure}
\caption{\label{fig::explanation_keylemma} }
\end{figure}

\begin{lemma} \label{lem::exp_decay_aux1}
Assume the same notation as in Lemma \ref{lem::key_lemma}. Then there exist constants $c, C\in (0,\infty)$ such that, for all $t>0$,
\[\PP \left[D_t^0\not\subset B(0,r_0)\right]\le C e^{-ct}.\]
\end{lemma}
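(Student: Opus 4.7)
The plan is to iterate Lemma \ref{lem::key_lemma} via the conformal Markov property of $\CGE_\kappa$, combined with the Schwarz lemma to propagate containment in $B(0,r_0)$ across iterations.

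Let $r_0, p_0, t_0$ be the constants from Lemma \ref{lem::key_lemma}, set $A_k := \{D_{kt_0}^0 \subset B(0,r_0)\}$, and let $\LF_k := \sigma(\gamma_s : s \le kt_0)$. Since $(D_s^0)$ is decreasing as a family of domains, the events $A_k$ are increasing in $k$. By the Markov property of the Poisson point process of excursions, the family $(\gamma_{kt_0+s})_{s\ge 0}$ is, conditional on $\LF_k$, an independent PPP with intensity $\mu_{\U,\kappa}$. Combining this with the conformal invariance of the SLE excursion measure (Proposition \ref{prop::sle_excursion_measure_conf_inv}) and the Carath\'eodory-stability used in Section \ref{sec::growthprocess_construction}, one obtains the representation
\[
\bigl(D_{kt_0+s}^0\bigr)_{s \ge 0} \stackrel{d}{=} \bigl(g_k(\tilde D_s^0)\bigr)_{s \ge 0} \quad \text{conditional on } \LF_k,
\]
where $(\tilde D_s^0)$ is an independent copy of $\CGE_\kappa$ targeted at $0$ in $\U$, and $g_k : \U \to D_{kt_0}^0$ is the conformal map normalized by $g_k(0) = 0$ and $g_k'(0) > 0$.

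The key observation is now just the Schwarz lemma: since $g_k$ maps $\U$ into itself while fixing the origin, $|g_k(z)| \le |z|$ for every $z \in \U$, and in particular $g_k(B(0,r_0)) \subset B(0,r_0)$. Therefore, on the event $\{\tilde D_{t_0}^0 \subset B(0,r_0)\}$ we have $D_{(k+1)t_0}^0 = g_k(\tilde D_{t_0}^0) \subset B(0,r_0)$, so Lemma \ref{lem::key_lemma} gives
\[
\PP[A_{k+1} \mid \LF_k] \;\ge\; \PP\bigl[\tilde D_{t_0}^0 \subset B(0,r_0)\bigr] \;\ge\; p_0, \qquad \text{almost surely.}
\]
Using $A_{k+1}^c \subset A_k^c$ (monotonicity of $(A_k)$) and $A_k \in \LF_k$, we get $\PP[A_{k+1}^c] = \E[\mathbf{1}_{A_k^c}\,\PP[A_{k+1}^c \mid \LF_k]] \le (1-p_0)\,\PP[A_k^c]$, and induction yields $\PP[A_k^c] \le (1-p_0)^k$.

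Finally, for an arbitrary $t > 0$, setting $k := \lfloor t/t_0 \rfloor$ and using $D_t^0 \subset D_{kt_0}^0$ gives $A_k \subset \{D_t^0 \subset B(0,r_0)\}$, so
\[
\PP\bigl[D_t^0 \not\subset B(0,r_0)\bigr] \;\le\; (1-p_0)^k \;\le\; C e^{-c t}, \qquad c := -\frac{\log(1-p_0)}{t_0},\ C := \frac{1}{1-p_0}.
\]
There is no real obstacle here beyond clean bookkeeping; the main conceptual point is that the Schwarz lemma automatically converts ``contained in $B(0,r_0)$ for the independent copy'' into ``contained in $B(0,r_0)$ for the actual process,'' so the one-step success probability $p_0$ can be recycled at every scale without any loss.
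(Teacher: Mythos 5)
Your proof is correct and follows essentially the same route as the paper: an induction on multiples of $t_0$, where the inductive step rests precisely on the Schwarz lemma ($g_k(B(0,r_0))\subset B(0,r_0)$, or equivalently $\Psi(B(0,r_0))\supset B(0,r_0)$ for $\Psi=g_k^{-1}$) combined with the Markov property of the PPP of excursions. Your version is, if anything, a bit more careful with the conditioning on $\LF_k$, which the paper compresses.
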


\begin{proof}
It is sufficient to prove that, for all $n\ge 1$, we have
\begin{equation}\label{eqn::exp_decay_aux}
\PP \left[D_{nt_0}^0\not\subset B(0,r_0)\right]\le (1-p_0)^n.
\end{equation}
We will prove (\ref{eqn::exp_decay_aux}) by induction on $n$. Assume that (\ref{eqn::exp_decay_aux}) holds for $n$. Then, for $n+1$, we have
$$
\PP \left[D_{(n+1)t_0}^0\not\subset B(0,r_0)\right]\le (1-p_0)^n\times \PB{ D_{(n+1)t_0}^0\not\subset B(0,r_0) \md D_{nt_0}^0\not\subset B(0,r_0)}.
$$
Let $\Psi$ be the conformal map from $D_{nt_0}^0$ onto $\U$ normalized at the origin. Since $|\Psi(z)|\ge |z|$, we have 
\begin{equation}\label{eqn::exp_decay_aux2}
\Psi(B(0,r_0))\supset B(0,r_0).
\end{equation}
Thus 
\begin{align*}
\PP \left[D_{(n+1)t_0}^0\not\subset B(0,r_0)\right]&\le (1-p_0)^n\times \PB{ D_{(n+1)t_0}^0\not\subset B(0,r_0) \md D_{nt_0}^0\not\subset B(0,r_0) } \\
&= (1-p_0)^n\times\PP\left[\Psi\left(D_{(n+1)t_0}^0\right)\not\subset \Psi(B(0,r_0))\right]\\
&\le (1-p_0)^n\times\PP\left[D_{t_0}^0\not\subset B(0,r_0)\right]\tag{by $\Psi\left(D_{(n+1)t_0}^0\right)\overset{d}{=}D^0_{t_0}$ and (\ref{eqn::exp_decay_aux2})}\\
&\le (1-p_0)^{n+1},
\end{align*} 
as desired.
\end{proof}

\begin{lemma}\label{lem::exp_decay_aux2}
Assume the same notation as in Lemma \ref{lem::key_lemma}. Then there exist constants $c, C\in (0,\infty)$ such that, for $r>0$ and for all $t>0$,
\[\PP \left[D_t^0\not\subset B(0,r)\right]\le C e^{-ct/\log(1/r)}\times \log(1/r).\]
In particular, this implies that 
\[\PP[T(0,r)>t]\le C e^{-ct/\log(1/r)}\times \log(1/r).\]
\end{lemma}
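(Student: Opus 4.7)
My plan is to iterate Lemma~\ref{lem::exp_decay_aux1} over $n \approx \log(1/r)/\log(1/r_0)$ consecutive time windows of length $T := t/n$ each, using the Schwarz lemma to multiplicatively compound the $r_0$-shrinking across successful windows. The reason this produces the particular form $e^{-ct/\log(1/r)}$ in the bound is exactly the tradeoff between needing $n = \Theta(\log(1/r))$ successive successes and the fact that each individual success probability is $1 - Ce^{-cT}$ with $T = t/n$.

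Assume $r \in (0, r_0)$ (the case $r \in [r_0, 1)$ follows directly from Lemma~\ref{lem::exp_decay_aux1} by adjusting constants, since the claimed bound then only weakens). Set $n := \lceil \log(1/r)/\log(1/r_0) \rceil$ so that $r_0^n \le r$, and $T := t/n$. By the Markov property of the driving Poisson point process together with the conformal invariance of $\mu_{\U,\kappa}$ (Proposition~\ref{prop::sle_excursion_measure_conf_inv}), conditional on the configuration up to time $(j-1)T$ the increment over $[(j-1)T, jT]$ is encoded by an independent random domain $\tilde{D}^{(j)}$ with the law of $D_T^0$, and
\[ D_{jT}^0 = \Psi_{(j-1)T}^{-1}\bigl(\tilde{D}^{(j)}\bigr). \]
Define the success events $A_j := \{\tilde{D}^{(j)} \subset B(0, r_0)\}$; Lemma~\ref{lem::exp_decay_aux1} gives $\PP[A_j^c] \le C e^{-cT}$ uniformly, irrespective of the past.

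The key step is the inductive claim that on $A_1 \cap \cdots \cap A_j$ one has $D_{jT}^0 \subset B(0, r_0^j)$. The base case $j=1$ is immediate. For the inductive step, observe that $\Psi_{jT}^{-1}$ maps $\U$ conformally onto $D_{jT}^0 \subset B(0, r_0^j)$ with $\Psi_{jT}^{-1}(0) = 0$, so applying the Schwarz lemma to $w \mapsto \Psi_{jT}^{-1}(w)/r_0^j$ yields the multiplicative contraction $|\Psi_{jT}^{-1}(w)| \le r_0^j |w|$ for every $w \in \U$. Combining this with $\tilde{D}^{(j+1)} \subset B(0, r_0)$ from $A_{j+1}$ gives $D_{(j+1)T}^0 \subset \Psi_{jT}^{-1}(B(0, r_0)) \subset B(0, r_0^{j+1})$.

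In particular, on $\bigcap_{j=1}^n A_j$ we have $D_t^0 = D_{nT}^0 \subset B(0, r_0^n) \subset B(0, r)$, and a union bound yields
\[ \PP[D_t^0 \not\subset B(0, r)] \le n C e^{-cT} \le C' \log(1/r) \exp\bigl(-c' t/\log(1/r)\bigr), \]
after absorbing the factor $\log(1/r_0)$ into $c'$. The $T(0,r)$-bound then follows from the containment $\{T(0,r) > t\} \subset \{D_t^0 \not\subset B(0,r)\}$. The only minor subtlety is verifying the conditional version of the Schwarz argument inside the induction, but this is automatic from the PPP construction and the normalization $\Psi_{jT}(0) = 0$, $\Psi_{jT}'(0) > 0$; otherwise the proof is essentially a careful bookkeeping of the compound shrinking exponent.
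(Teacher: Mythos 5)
Your proof is correct and takes essentially the same approach as the paper: iterate Lemma~\ref{lem::exp_decay_aux1} over $n\asymp\log(1/r)$ time windows of length $t/n$, compound the $r_0$-contraction multiplicatively via the Schwarz lemma, and conclude with a union bound. The paper phrases the compounding as a telescoping conditional product rather than via independent increments $\tilde D^{(j)}$, and proves the Schwarz step in the equivalent form $|\Psi(z)|\ge |z|/r_0^k$ rather than your $|\Psi_{jT}^{-1}(w)|\le r_0^j|w|$, but these are the same argument.
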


\begin{proof}
This will be somewhat similar to the previous lemma. First, it is enough to prove that, for any $n\in\N$,
\begin{equation}\label{eqn::exp_decay_r0k}
\P\left[ D_{t}^0 \subset B(0,r_0^n)\right] \ge  \P\left[D_{t/n}^0 \subset B(0,r_0)\right]^n\,,
\end{equation}
because then
\[
\P\left[ D_{t}^0 \not\subset B(0,r_0^n)\right] \le n\, \P\left[D_{t/n}^0 \not\subset B(0,r_0)\right]\,,
\]
and choosing $n=\lceil\log r / \log r_0\rceil$ and using Lemma~\ref{lem::exp_decay_aux1}, we get the upper bound
\[
\PP \left[D_t^0\not\subset B(0,r)\right] \le n \, C e^{-c(t/n)},
\]
which implies the conclusion.

We prove (\ref{eqn::exp_decay_r0k}) by induction on $n$. More precisely, we claim that, for any $k\in\N$ and $u>0$,
\begin{equation}\label{eqn::exp_decay_r0i}
\Pb{ D_{(k+1)u}^0 \subset B(0,r_0^{k+1}) \cond D_{ku}^0 \subset B(0,r_0^k) } \geq \Pb{D^0_u \subset B(0,r_0)}\,,
\end{equation}
and then (\ref{eqn::exp_decay_r0k}) follows by taking $u=t/n$ and a telescoping product for $k=0,1,\dots,n-1$.

Let $\Psi$ be the conformal map from $D_{ku}^0$ onto $\U$ normalized at the origin. We know that $\Psi(D_{(k+1)u}^0)$ has the same law as $D_u^0$. 
To prove (\ref{eqn::exp_decay_r0i}), it is then sufficient to show that, conditioned on $\{D_{ku}^0 \subset B(0,r_0^k)\}$,
\begin{equation}\label{eqn::exp_decay_r_aux}
\Psi(B(0,r_0^{k+1}))\supset B(0,r_0).
\end{equation}
On the event $\{D_{ku}^0\subset B(0, r_0^k)\}$, 
let $\phi_1$ be the conformal map from $D_{ku}^0$ onto $B(0, r_0^k)$ normalized at the origin; then $|\phi_1(z)|\ge |z|$. Let $\phi_2(z)=z/r_0^k$; then $\Psi=\phi_2\circ\phi_1$. Thus 
\[|\Psi(z)|\ge |z|/r_0^k,\]
which implies (\ref{eqn::exp_decay_r_aux}) and hence completes  the proof.
\end{proof}

\begin{corollary} \label{cor::growthprocess_transient}
Assume the same notation as in Lemma \ref{lem::key_lemma}. Then almost surely the growth process $(D_t^0, t\ge 0)$ is transient, i.e., the diameter of $D_t^0$ goes to zero as $t\to\infty$ almost surely.
\end{corollary}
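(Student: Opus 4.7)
The plan is to deduce the corollary directly from the quantitative decay in Lemma~\ref{lem::exp_decay_aux2}, combined with the monotonicity of $(D_t^0, t\ge 0)$ and Borel--Cantelli. The key observation is that for each fixed $r\in(0,1)$, Lemma~\ref{lem::exp_decay_aux2} gives positive constants $c(r),C(r)$ with
\[
\P\bigl[D_t^0\not\subset B(0,r)\bigr]\le C(r)\,e^{-c(r)t}\qquad\text{for every }t>0,
\]
since $\log(1/r)$ is a constant once $r$ is fixed. Summing along integer times, $\sum_{n\ge 1}\P[D_n^0\not\subset B(0,r)]<\infty$, so by Borel--Cantelli there is almost surely an integer $N(r)$ with $D_n^0\subset B(0,r)$ for all $n\ge N(r)$.

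Next I would upgrade this from the integer skeleton to all $t\ge 0$ by monotonicity. By the construction of $\CGE_\kappa$ in Section~\ref{sec::growthprocess_construction}, the family $(D_t^0, t\ge 0)$ is a decreasing sequence of simply connected domains: indeed, each $\Psi_t$ is obtained by composing with further conformal maps $f_s$ (corresponding to excursions of positive capacity arriving after time $t$), and each such composition enlarges the unbounded complement. Therefore, on the event above, $D_t^0\subset D_{\lfloor t\rfloor}^0\subset B(0,r)$ for every $t\ge N(r)$.

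Finally, intersect over a countable sequence $r_k\downarrow 0$ (say $r_k=1/k$). Taking the almost sure event on which, for every $k$, there exists $N_k$ with $D_t^0\subset B(0,1/k)$ for all $t\ge N_k$, we conclude that $\diam(D_t^0)\le 2/k$ eventually for each $k$, hence $\diam(D_t^0)\to 0$ almost surely. I do not expect any serious obstacle here: the only delicate points are the monotonicity of $D_t^0$ in $t$ (which is built into the construction) and that the exponential rate $c(r)$ from Lemma~\ref{lem::exp_decay_aux2} is strictly positive for every fixed $r>0$, which is already provided.
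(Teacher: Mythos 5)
Your proof is correct and follows essentially the same approach as the paper: apply Lemma~\ref{lem::exp_decay_aux2} together with Borel--Cantelli along integer times and then use monotonicity of $(D_t^0,t\ge 0)$. The only cosmetic difference is that the paper runs a single Borel--Cantelli argument with a shrinking sequence $r_n=e^{-\sqrt{n}}$ (so that $\P[D_n^0\not\subset B(0,r_n)]\le C\sqrt{n}\,e^{-c\sqrt{n}}$ is summable), whereas you fix $r$, apply Borel--Cantelli, and then intersect over a countable sequence $r_k\downarrow 0$.
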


\begin{proof}
For $n\ge 1$, set $r_n=e^{-\sqrt{n}}$. By Lemma \ref{lem::exp_decay_aux2}, we have that 
\[\PP[D_n^0\not\subset B(0,r_n)]\le C\sqrt{n}e^{-c\sqrt{n}}.\]
Thus 
\[\sum_{n}\PP[D_n^0\not\subset B(0,r_n)]<\infty.\]
By the Borel-Cantelli lemma, almost surely there is $N$ such that 
\[D_n^0\subset B(0,r_n),\quad \forall n\ge N.\]
This implies the conclusion.
\end{proof}

\begin{proof}[Proof of Theorem \ref{thm::growthprocess_dimension}, Upper bound]
For $n\ge 1$, define 
\[\Gamma_n=\{z\in\Gamma: T(0,z)\le n\}.\]
By Corollary \ref{cor::growthprocess_transient}, we see that $\Gamma=\cup_{n}\Gamma_n$, thus it is sufficient to show that, for $n\ge 1$, almost surely,
\begin{equation}\label{eqn::dimension_aux}
\dim(\Gamma_n)\le 1+\kappa/8.
\end{equation}
For $m\ge 1$, let $\LU_m$ be the collection of open balls with centers in $e^{-m}\Z^2\cap\U$ and radius $e^{-m}$. Denote by $z(U)$ the center of $U\in\LU_m$. For any $U\in\LU_m$, suppose that $U\cap\Gamma_n\neq\emptyset$ and denote $z(U)$ by $z$; we will argue that this implies 
\begin{equation}\label{eqn::upperbound_aux}
\inrad\left(z; D^{z}_n\right)\le e^{-m}.
\end{equation}
There are two cases: $T(0,z)\le n$ or $T(0,z)>n$. If $T(0,z)\le n$, then $U\cap\Gamma_n\neq\emptyset$ implies $\inrad(z; D^z_{T(0,z)})\le e^{-m}$ which implies (\ref{eqn::upperbound_aux}) since $T(0,z)\le n$. If $T(0,z)>n$, then we know that $D^z_n=D^0_n$. Take $w\in U\cap\Gamma_n$. Since $T(0,w)\le n$, we know that $w\not\in D^0_n$, combining with $|z-w|<e^{-m}$, we obtain (\ref{eqn::upperbound_aux}). Therefore, we have, for any $\lambda\in (0,1-\kappa/8)$
\begin{align*}
\PP\left[U\cap\Gamma_n\neq\emptyset\right]&\le \PP\left[\inrad\left(z; D^{z}_n\right)\le e^{-m}\right]\tag{$z=z(U)$}\\
&\le \PP\left[\CR(z; D^z_n)\le 4e^{-m}\right]\tag{By (\ref{eqn::koebe_quarter})}\\
&=\PP\left[\CR(z; D^z_n)^{-\lambda}\ge (4e^{-m})^{-\lambda}\right]\\
&\le (4e^{-m})^{\lambda}\E\left[\CR(z; D^z_n)^{-\lambda}\right]\\
&=4^{\lambda}e^{-m\lambda}\exp(n\Lambda_{\kappa}(\lambda)).
\end{align*}
We use $\{U\in\LU_m: U\cap \Gamma_n\neq\emptyset\}$ to cover $\Gamma_n$ and to bound $s$-Hausdorff measure of $\Gamma_n$: there is a constant $C$ (only depending on $\kappa,\lambda, n$) such that
\[\E[\LH_s(\Gamma_n)]\le \sum_{U\in\LU_m} \diam(U)^s\PP\left[U\cap\Gamma_n\neq\emptyset\right]\le C e^{2m-ms-m\lambda}.\]
If $s>2-\lambda$, taking $m\to\infty$, we have $\E[\LH_s(\Gamma_n)]=0$, this gives 
\[2-\lambda\ge \dim(\Gamma_n), \quad \text{almost surely}.\]
This holds for any $\lambda\in (0,1-\kappa/8)$, thus 
\[1+\kappa/8\ge \dim(\Gamma_n), \quad \text{almost surely}.\]
\end{proof}

\begin{proof}[Proof of Theorem \ref{thm::growthprocess_dimension}, Lower bound]
Since $\Gamma$ contains the conformal image of entire $\SLE_\kappa$ arcs, we just need to show that such a conformal map cannot have such a bad distortion that would ruin the dimension $1+\kappa/8$ proved in \cite{BeffaraDimension} for $\SLE_{\kappa}$ in the upper half plane. 

Suppose that $(\gamma_t, t\ge 0)$ is a PPP of SLE excursions and $(D_t^0, t\ge 0)$ is the corresponding growth process targeted at the origin. Let $t>0$ be a time when $\gamma_t$ is non-empty, and $\phi$ be the conformal map from $\U$ onto $D^0_t$ normalized at the origin. For any $r<1$, we have some $M_r<\infty$ such that $1/M_r < |\phi'(z)| < M_r$ for all $|z|\leq r$. This implies that the diameter of every subset $U$ of the closed ball $\overline{B(0,r)}$ is changed by at most some finite factor $\tilde M_r$, which implies that 
\begin{equation}\label{eqn::dimdist}
\dim(\phi(\gamma_t \cap \overline{B(0,r)}) = \dim(\gamma_t \cap \overline{B(0,r)}).
\end{equation} 
Since we are dealing with $\kappa<4$ only, the countable union of $\gamma_t \cap \overline{B(0,1-1/n)}$ is all of $\gamma_t$ except for its two endpoints. Thus,~(\ref{eqn::dimdist}) implies that $\dim(\Gamma) \geq \dim\phi(\gamma_t) = 1+\kappa/8$.
\end{proof}

\begin{proof}[Proof of Theorem \ref{thm::disconnection_green}]
By the conformal invariance of $\CGE_\kappa$, it is equivalent to show that, for all $x>0$,
\[\left|\E[T(0,e^{-x})]-\frac{x}{\Lambda'_{\kappa}(0)}\right|\le C.\]
Let $(D_t^0, t\ge 0)$ be the growth process targeted at the origin. 
Define $X(t)=-\log \CR(D_t^0)$, which is the same as the accumulated capacity studied in Section \ref{sec::accumulated_capacity_laplace}.  Define 
\[\tau_x=\inf\{t: X(t) >x\}, \quad Y_x=X(\tau_x).\] It is clear that, for $\lambda<1-\kappa/8$, the process 
\[M_t=\exp\left(\lambda X_t- t\Lambda_{\kappa}(\lambda)\right)\]
is a martingale. 

First, we argue that $(M_{t\wedge \tau_x})_{t\ge 0}$ is a uniformly integrable martingale. Pick $\beta>1$ such that $\lambda\beta<1-\kappa/8$. It is sufficient to show that $(M_{t\wedge \tau_x})_{t\ge 0}$ is uniformly bounded in $L^{\beta}$. We have 
\[\E\left[M_{t\wedge \tau_x}^{\beta}\right]=\E[\exp(\lambda\beta X_{t\wedge \tau_x}-(t\wedge \tau_x)\beta\Lambda_{\kappa}(\lambda))]\le \exp(\lambda\beta x)\E[\exp(\lambda\beta(Y_x-x))].\]
By Propositions \ref{prop::overshoot} and \ref{prop::accumulated_capacity_laplace_exponent}, we know that $\E[\exp(\lambda\beta(Y_x-x))]$ is finite, thus 
\[\sup_t \E\left[M_{t\wedge \tau_x}^{\beta}\right]<\infty,\]
as desired.

Second, we show that $|\E[\tau_x]-x/\Lambda'_{\kappa}(0)|\le C$ for some $C<\infty$ only depending on $\kappa$. Since $(M_{t\wedge \tau_x})_{t\ge 0}$ is a uniformly integrable martingale, we can apply Optional Stopping Theorem and obtain
\begin{equation}\label{eqn::disconnecting_expectation_aux1}
1=\E\left[\exp\left(\lambda Y_x-\Lambda_{\kappa}(\lambda)\tau_x\right)\right].
\end{equation}
Differentiating (\ref{eqn::disconnecting_expectation_aux1}) with respect to $\lambda$ and setting $\lambda=0$, we have 
\[\E[\tau_x]=x/\Lambda'_{\kappa}(0)+\E[(Y_x-x)/\Lambda'_{\kappa}(0)].\]
By Propositions \ref{prop::overshoot} and \ref{prop::accumulated_capacity_laplace_exponent} again, we see that $\E[(Y_x-x)/\Lambda'_{\kappa}(0)]$ is uniformly bounded as desired.

Third, we argue that $T(0, e^{-x})-\tau_x$ has exponentially decaying tail. 
Let $\Psi$ be the conformal map from $D^0_{\tau_x}$ onto $\U$ normalized at the origin. Note that $(\Psi(D^0_{\tau_x+t}), t\ge 0)$ has the same law as $(D^0_t, t\ge 0)$ and is independent of $(D^0_{s}, s\le \tau_x)$. Let $\tilde{T}$ be an independent disconnection time, then, given $D^0_{\tau_x}$
\[\PP[T(0, e^{-x})\ge \tau_x+t]=\PP\left[\tilde{T}\left(0, \Psi(e^{-x})\right)\ge t\right].\] By the Growth Theorem (\cite[Theorem 3.23]{LawlerConformallyInvariantProcesses}), we have that, for any $z\in D^0_{\tau_x}$, 
\[|z|\Psi'(0)\le \frac{|\Psi(z)|}{(1-|\Psi(z)|)^2}.\]
In particular, on the event $\{T(0, e^{-x})>\tau_x\}$, we know that $e^{-x}$ is still contained in $D^0_{\tau_x}$, and since $\Psi'(0)=\exp(Y_x)\ge e^x$, we have 
\[1\le \frac{|\Psi(e^{-x})|}{(1-|\Psi(e^{-x})|)^2},\]
thus
\[|\Psi(e^{-x})|\ge (3-\sqrt{5})/2.\]
Combining with Lemma \ref{lem::exp_decay_aux2}, we have that, for some constants $c, C$
\[\PP[T(0, e^{-x})\ge \tau_x+t]=\PP\left[\tilde{T}\left(0, \Psi(e^{-x})\right)\ge t\right]\le C e^{-ct},\]
as desired.

Finally, we can complete the proof by noting that $T(0, e^{-x})\ge \tau_{x-\log 4}$ by (\ref{eqn::koebe_quarter}) and that $\tau_x-\tau_{x-\log 4}$ has exponentially decaying tail.
\end{proof}

\subsection{Proof of Theorem \ref{thm::distribution_cvg}}

\begin{lemma}\label{lem::distribution_cvg_aux}
Assume the same notation as in Theorem \ref{thm::distribution_cvg}. Then, almost surely,  the sequence $h_n$ converges to some $h$ in $H^{-2-\delta}_{loc}(\U)$.
\end{lemma}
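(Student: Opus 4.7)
The plan is to apply Proposition~\ref{prop::distribution_cvg_criterion} with $d=2$; thus I need to verify conditions (1) and~(2) there for the sequence $(h_n)_{n\in\N}$. The underlying structural facts I will exploit are: (i) for each fixed $z$, the process $(h_t(z))_{t\ge 0}$ is a mean-zero martingale (the Markov property of the PPP together with $\E[\sigma_\gamma]=\bar m$ ensures that $t\bar m$ is exactly the compensator of the cumulative weight sum at $z$); (ii) for distinct $z,w$ the coupling of Proposition~\ref{prop::relation_two_growing_processes} makes $h_t(z)=h_t(w)$ for all $t\le T(z,w)$, and the increments after $T(z,w)$ evolve conditionally independently; (iii) the second moment $\int x^2\,\nu[dx]=1$ together with the conformal invariance from Theorem~\ref{thm::growing_process_conf_inv} gives a $z$-independent control on the per-unit-time variance of $h_t(z)$.

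For condition~(1), orthogonality of martingale increments and conformal invariance yield $\E[h_n(z)^2]=\sum_{k=0}^{n-1}\E[(h_{k+1}(z)-h_k(z))^2]\le Cn$ with a constant $C$ uniform in $z\in\U$ (coming from $\int x^2\,\nu[dx]=1$ and the first-moment finiteness of the accumulated capacity in Proposition~\ref{prop::accumulated_capacity_laplace_exponent}). Integrating over any compact $K\subset\U$ gives $\int_K\E[h_n(z)^2]\,dz\le Cn\,|K|<\infty$, verifying~(1).

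For condition~(2), the key covariance identity is
\begin{equation*}
\E\bigl[(h_{n+1}(z)-h_n(z))(h_{n+1}(w)-h_n(w))\bigr]=\E\bigl[(h_{(n+1)\wedge T(z,w)}(z)-h_n(z))^2\,1_{\{T(z,w)>n\}}\bigr].
\end{equation*}
This is obtained by splitting on the three cases $T(z,w)\le n$, $n<T(z,w)\le n+1$, and $T(z,w)>n+1$: in the first case the two future increments are independent of mean zero, so the cross expectation vanishes; in the other two cases the increments are equal up to time $T(z,w)$ and afterwards become conditionally independent martingale increments, whose cross contribution also vanishes. Bounding the right-hand side by the $L^2$ control from Step~(1) gives $\le C\,\P[T(z,w)>n]$.

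To bound $\P[T(z,w)>n]$ uniformly on $K$, I apply a M\"obius transformation of $\U$ sending $z$ to $0$; by conformal invariance of $\CGE_\kappa$ and the comparability $|\varphi(w)|\asymp|z-w|$ for $z,w$ in the compact set $K$, Lemma~\ref{lem::exp_decay_aux2} yields
\begin{equation*}
\P[T(z,w)>n]\le C\,e^{-cn/\log(1/|z-w|)}\,\log(1/|z-w|).
\end{equation*}
Integrating over $K\times K$ in polar coordinates around the diagonal and substituting $u=\log(1/r)$ in the radial integral leads to a Laplace-type integral $\int_0^\infty u\,e^{-2u-cn/u}\,du$, whose saddle point at $u^\ast=\sqrt{cn/2}$ gives a bound of the form $C'e^{-c'\sqrt{n}}$ for large $n$. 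Hence I may take $a_n^3=C'e^{-c'\sqrt{n}}$, so $a_n=C''e^{-c'\sqrt{n}/3}$, a summable sequence.

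Both hypotheses of Proposition~\ref{prop::distribution_cvg_criterion} are therefore satisfied, giving a limit $h\in H^{-2-\delta}_{loc}(\U)$ with $h_n\to h$ almost surely in that space. The most delicate step is the covariance identity: one must carefully justify that, in the coupled construction of the growth processes targeted at $z$ and $w$, the post-$T(z,w)$ evolutions at the two points are \emph{conditionally} independent given the common past, which relies on the Markov property of the growth process and the fact that excursions that contribute to $D^z$ after $T(z,w)$ are disjoint (as a subset of the underlying PPP) from those contributing to $D^w$.
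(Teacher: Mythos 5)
Your proof is correct and follows the paper's overall strategy: verify conditions (1) and (2) of Proposition~\ref{prop::distribution_cvg_criterion}, reduce the covariance term to $\PP[T(z,w)>n]$ via the coupling of Proposition~\ref{prop::relation_two_growing_processes}, and show the resulting integral over $K\times K$ decays like $e^{-c\sqrt{n}}$.

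There are two small divergences from the paper's proof, both in your favor. First, where you bound the covariance, your explicit identity
\[
\E\bigl[(h_{n+1}(z)-h_n(z))(h_{n+1}(w)-h_n(w))\bigr]=\E\bigl[(h_{(n+1)\wedge T(z,w)}(z)-h_n(z))^2\,1_{\{T(z,w)>n\}}\bigr]
\]
is a cleaner way to organize the three-way case split than the paper's pointwise statement $|\E[\cdots]|\le (T(z,w)-n)\wedge 1$ on $\{T(z,w)>n\}$; both yield the same bound $\le\PP[T(z,w)>n]$ via optional stopping for the submartingale of squared increments. Second, and more substantively, the paper handles $\iint_{K\times K}\PP[T(z,w)\ge n]\,dz\,dw$ by splitting into near-diagonal pairs $|z-w|\le e^{-\sqrt n}$, controlled via Theorem~\ref{thm::disconnection_green} plus Markov's inequality, and far pairs, controlled via Lemma~\ref{lem::exp_decay_aux2}. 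You instead apply Lemma~\ref{lem::exp_decay_aux2} uniformly (after a M\"obius change of coordinates, using $|\varphi(w)|\asymp|z-w|$ on compacts, which is indeed valid) and close the estimate with a Laplace/saddle-point computation at $u^*=\sqrt{cn/2}$. This avoids Theorem~\ref{thm::disconnection_green} entirely — in fact the paper's use of it is itself dispensable here, since the crude bound $\PP\le 1$ already gives $\iint_{|z-w|\le e^{-\sqrt n}}1\,dz\,dw = O(e^{-2\sqrt n})$. Both routes produce a summable $a_n$.

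One minor misattribution: in your verification of condition (1), the bound $\E[h_n(z)^2]=n$ follows from conformal invariance and the Compensation Formula for the marked PPP (giving variance $n\int x^2\,\nu[dx]=n$ over a time interval of length $n$); the first-moment finiteness of the accumulated capacity from Proposition~\ref{prop::accumulated_capacity_laplace_exponent} is not actually used here, since the weights are attached to arrival times of the underlying Poisson process and are independent of the capacities of the arcs. This does not affect the correctness of the conclusion.
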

\begin{proof}
To show the conclusion, it is sufficient to check the two conditions in Proposition \ref{prop::distribution_cvg_criterion}. Fix a compact $K\subset \U$. By the conformal invariance of $\CGE_\kappa$, we have $\E[h_n(z)^2]=\E[h_n(0)^2]$. Let $(\sigma_t, t\ge 0)$ be a Poisson point process with intensity $\nu$, then $h_n(0)$ has the same law as $\sum_{s<n}\sigma_s$, and by Compensation Formula for Poisson point process (see \cite[Section 0.5]{BertoinLevyProcesses}), we have
\[\E[h_n(z)^2]=\E[h_n(0)^2]=n.\]
This guarantees the first condition in Proposition \ref{prop::distribution_cvg_criterion}. 

For distinct $z,w\in\U$, let $(D_t^z, t\ge 0)$ and $(D^w_t, t\ge 0)$ be growth processes targeted at $z, w$ respectively and they are coupled so that they are identical up to the disconnection time $T(z,w)$, after which they continue conditionally independently. Given $(D^z_t, t\ge 0)$ and $(D^w_t, t\ge 0)$, 
\begin{align*}
\text{if }T(z,w)\le n, &\text{ we have }\E[(h_{n+1}(z)-h_n(z))(h_{n+1}(w)-h_n(w))]=0;\\
\text{if }T(z,w)> n, &\text{ we have }|\E[(h_{n+1}(z)-h_n(z))(h_{n+1}(w)-h_n(w))]|\le (T(z,w)-n)\wedge 1.
\end{align*}
Thus, we have 
\begin{equation}\label{eqn::distribution_cvg_aux1}
|\E[(h_{n+1}(z)-h_n(z))(h_{n+1}(w)-h_n(w))]|\le \PP[T(z,w)\ge n]. 
\end{equation}

Set $\eps=e^{-\sqrt{n}}$. Suppose $|z-w|\le \eps$. By Theorem \ref{thm::disconnection_green}, we have 
\[\PP[T(z,w)\ge n]\le \frac{1}{n}\E[T(z,w)]\le \frac{1}{n}(G_{\U}(z,w)/\Lambda'_{\kappa}(0)+O(1)).\]
Therefore,
\begin{equation}\label{eqn::distribution_cvg_aux2}
\iint_{K\times K, |z-w|\le\eps}\PP[T(z,w)\ge n]dzdw=O\left(\frac{1}{n}\times \eps^2\log\frac{1}{\eps}\right)=O(e^{-2\sqrt{n}}).
\end{equation}

Suppose $|z-w|\ge \eps$, by Lemma \ref{lem::exp_decay_aux2}, we have 
\[\PP[T(z,w)\ge n]\le C \log(1/\eps)\times e^{-cn/\log(1/\eps)}\le C \sqrt{n} e^{-c\sqrt{n}}.\]
Then 
\begin{equation}\label{eqn::distribution_cvg_aux3}
\iint_{K\times K, |z-w|\ge\eps}\PP[T(z,w)\ge n]dzdw=O(\sqrt{n} e^{-c\sqrt{n}}).
\end{equation}
Combining (\ref{eqn::distribution_cvg_aux1}), (\ref{eqn::distribution_cvg_aux2}) and (\ref{eqn::distribution_cvg_aux3}), we have, for some $c\in (0,2)$, 
\[\iint_{K\times K}|\E[(h_{n+1}(z)-h_n(z))(h_{n+1}(w)-h_n(w))]|dzdw =O(e^{-c\sqrt{n}}).\]
This guarantees the second condition in Proposition \ref{prop::distribution_cvg_criterion}. 
\end{proof}

\begin{proof}[Proof of Theorem \ref{thm::distribution_cvg} Item (1)]
By Lemma \ref{lem::distribution_cvg_aux}, we know that $h_n$ almost surely converges to $h$ in $H^{-2-\delta}_{loc}(\U)$. We will show that $h_t \to h$ in $H^{-2-\delta}_{loc}(\U)$ almost surely. Since $C_c^{\infty}(\U)$ is separable, it is sufficient to show that, for any $f\in C_c^{\infty}(\U)$, we have $\langle h_t, f\rangle\to \langle h, f\rangle$ almost surely. 

For $n\le t\le n+1$, by an argument as in the proof of Lemma \ref{lem::distribution_cvg_aux}, we have, for some $c\in (0,2)$,
\[\iint_{K\times K}|\E[(h_{t}(z)-h_n(z))(h_{t}(w)-h_n(w))]|dzdw =O(e^{-c\sqrt{n}}).\]
This implies that 
\[\E\left[(\langle h_t, f\rangle-\langle h_n, f\rangle)^2\right]=O(e^{-c\sqrt{n}}).\]
Since $(\langle h_t, f\rangle)_{t\ge 0}$ is a martingale, by Doob's maximal inequality, we have 
\[\PP\left[\sup_{n\le t\le n+1}|\langle h_t, f\rangle-\langle h_n, f\rangle|\ge e^{-c\sqrt{n}/4}\right]=O(e^{-c\sqrt{n}/2}).\]
Thus
\[\sum_n \PP\left[\sup_{n\le t\le n+1}|\langle h_t, f\rangle-\langle h_n, f\rangle|\ge e^{-c\sqrt{n}/4}\right]<\infty.\]
By Borel-Cantelli Lemma, we have, almost surely, there exists $m$ such that 
\[\sup_{n\le t\le n+1}|\langle h_t, f\rangle-\langle h_n, f\rangle|\le e^{-c\sqrt{n}/4},\quad \forall n\ge m.\]
Combining with the fact that $\langle h_n, f\rangle\to \langle h, f\rangle$ almost surely, we have $\langle h_t, f\rangle\to \langle h, f\rangle$ almost surely.
\end{proof}

\begin{proof}[Proof of Theorem \ref{thm::distribution_cvg} Item (2)]
We will show that the limiting distribution $h$ is measurable with respect to the $\sigma$-algebra $\Sigma$ generated by $\Gamma$, the collection of $\SLE_{\kappa}$ excursions, and the weights $(\sigma_{\gamma})_{\gamma\in\Gamma}$. 

Since $C_c^{\infty}(\U)$ is separable, there is a countable dense subset $S$ of $C_c^{\infty}(\U)$. Note that the distribution $h_n$ is $\Sigma$-measurable and since $h$ is an almost sure limit of $h_n$, the limiting filed $h$ is determined by the values $\{\langle h_n, f\rangle: n\ge 1, f\in S\}$. Thus $h$ is $\Sigma$-measurable.
\end{proof}

\begin{proof}[Proof of Theorem \ref{thm::distribution_cvg} Item (3)]
The conformal invariance of the limiting distribution $h$ is an immediate consequence of the conformal invariance of $\CGE_\kappa$. We emphasize that the conformal invariance of $\CGE_\kappa$ in Theorem \ref{thm::growing_process_conf_inv} does not require time change.
\end{proof}

\subsection{Extremes of the limiting field}
Assume the same notation as in Theorem \ref{thm::distribution_cvg}. Fix $z\in\U$, 
by Strong Law of Large Numbers and Central Limit Theorem for subordinators, we can derive the typical behavior of the field $h_t(z)$ as $t\to\infty$: we have almost surely 
\[\lim_{t\to\infty}h_t(z)/t=0.\]
We are also interested in the points that $h_t(z)/t$ behaves in an abnormal way. Define 
\[\Phi_{\nu}(x)=\{z\in\U: \lim_{t\to\infty}h_t(z)/t=x-\bar{m}\}.\]
Let $\Lambda_{\nu}$ and $\Lambda_{\nu}^*$ be the Laplace exponent and its Fenchel-Legendre transform corresponding to the weight measure: for $\lambda, x\in\R$
\[\Lambda_{\nu}(\lambda)=\int(e^{\lambda x}-1)\nu[dx],\quad \Lambda_{\nu}^*(x)=\sup_{\lambda\in\R}(\lambda x-\Lambda_{\nu}(\lambda)).\]
We assume that $\Lambda_{\nu}^*$ is continuous.  
Then by Large Deviation again, we can derive the following estimate on the Hausdorff dimension of $\Phi_{\nu}(x)$.
\begin{theorem} Fix $\kappa\in [0,4)$ and recall that $\Theta(\alpha)$ is defined in (\ref{eqn::Theta_alpha}) and $\alpha_{\min}$ is defined in (\ref{eqn::alpha_min}). 
For $\alpha\ge \alpha_{min}$, we have almost surely
\[\begin{cases}
\dim\left(\Theta(\alpha)\cap \Phi_{\nu}(x)\right)\le 2-\Lambda_{\kappa}^*(\alpha)/\alpha-\Lambda_{\nu}^*(x)/\alpha, &\text{if } \Lambda_{\kappa}^*(\alpha)/\alpha+\Lambda_{\nu}^*(x)/\alpha\le 2;\\
\Theta(\alpha)\cap \Phi_{\nu}(x)=\emptyset, &\text{if } \Lambda_{\kappa}^*(\alpha)/\alpha+\Lambda_{\nu}^*(x)/\alpha> 2.
\end{cases}\]
\end{theorem}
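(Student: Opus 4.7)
The plan is to extend the covering-and-large-deviations scheme from the proof of Theorem~\ref{thm::abnormal_decay_upper} to two dimensions, tracking the capacity and the field simultaneously. For each $z\in\U$, set $X_n^z:=-\log\CR(z;D_n^z)$ and $S_n^z:=h_n(z)+n\bar{m}$. The former is a subordinator with Laplace exponent $\Lambda_\kappa$, the latter the uncompensated L\'evy weight sum, with Laplace exponent $\Lambda_\nu(\mu)=\int(e^{\mu x}-1)\nu[dx]$. Since the weights $(\sigma_\gamma)$ are drawn independently of the excursion process $\Gamma$, the processes $X^z$ and $S^z$ are independent L\'evy processes, so the joint Laplace exponent of $(X_n^z,S_n^z)$ is $\Lambda_\kappa(\lambda)+\Lambda_\nu(\mu)$. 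A two-dimensional Cram\'er upper bound (the product of the one-dimensional bounds from \cite[Theorem~2.2.3]{DemboZeitouniLargeDeviations}) then gives, uniformly in $z\in\U$ and for any compact $I_1\times I_2\ni(\alpha,x)$,
\[\P\bigl[X_n^z/n\in I_1,\;S_n^z/n\in I_2\bigr]\le\exp\!\left(-n\!\left(\inf_{(u,v)\in I_1\times I_2}\bigl(\Lambda_\kappa^*(u)+\Lambda_\nu^*(v)\bigr)+o(1)\right)\right).\]

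I then copy verbatim the covering scheme of Theorem~\ref{thm::abnormal_decay_upper}, substituting this joint estimate for the one-dimensional one. Fix $\delta>0$ and $\beta>\alpha$, let $\LU_n$ be the collection of open balls of radius $e^{-n\beta}$ centered on $e^{-n\beta}\Z^2\cap B(0,1-\delta/2)$, and for $\alpha^-<\alpha<\alpha^+<\beta$ and $x^-<x<x^+$ set
\[\LU_n(\alpha^\pm,x^\pm):=\bigl\{U\in\LU_n:X_n^{z(U)}/n\in[\alpha^-,\alpha^+],\;S_n^{z(U)}/n\in[x^-,x^+]\bigr\}.\]
The Koebe $1/4$ argument from the proof of Theorem~\ref{thm::abnormal_decay_upper} still works: since $e^{-n\beta}$ is eventually smaller than $\inrad(z;D_n^z)$ for $z\in\Theta(\alpha)$, the nearest lattice center $z(U)$ lies in the same component $D_n^z=D_n^{z(U)}$ as $z$, whereupon the capacities are comparable up to a factor of $4$ via~(\ref{eqn::koebe_quarter}) and the weight sums agree \emph{exactly}, $S_n^z=S_n^{z(U)}$, because the same excursions contribute with the same weights. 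Hence $\bigcup_{n\ge m}\LU_n(\alpha^\pm,x^\pm)$ covers $\Theta(\alpha)\cap\Phi_\nu(x)\cap B(0,1-\delta)$ for all $m$, and the joint Cram\'er bound yields
\[\E\bigl[\LH_s(\Theta(\alpha)\cap\Phi_\nu(x)\cap B(0,1-\delta))\bigr]\le\sum_{n\ge m}e^{n(2\beta-s\beta-I^-+o(1))},\]
with $I^-:=\inf_{[\alpha^-,\alpha^+]\times[x^-,x^+]}(\Lambda_\kappa^*+\Lambda_\nu^*)$. Taking $m\to\infty$, shrinking the intervals to $(\alpha,x)$ using the continuity of $\Lambda_\kappa^*$ and of $\Lambda_\nu^*$, and then letting $\beta\downarrow\alpha$ and $\delta\downarrow 0$, we obtain $\dim(\Theta(\alpha)\cap\Phi_\nu(x))\le 2-(\Lambda_\kappa^*(\alpha)+\Lambda_\nu^*(x))/\alpha$ almost surely. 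Specializing to $s=0$ in the same estimate shows $\E[\LH_0(\,\cdot\,)]=0$ whenever $(\Lambda_\kappa^*(\alpha)+\Lambda_\nu^*(x))/\alpha>2$, so the intersection is almost surely empty in that regime.

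The only genuinely new ingredient, and therefore the principal obstacle, is the factorization of the joint Laplace exponent of $(X_n^z,S_n^z)$ as $\Lambda_\kappa(\lambda)+\Lambda_\nu(\mu)$, which amounts to interpreting $h_t(z)$ as the compensated L\'evy integral against the Poisson random measure of intensity $\mu_{\U,\kappa}(d\gamma)\otimes\nu(d\sigma)$; the required independence of $X^z$ and $S^z$ as L\'evy processes then reflects the independence of the weights from the excursion field, which is also what underlies the identity $\E[h_n(z)^2]=n$ used in the proof of Lemma~\ref{lem::distribution_cvg_aux}. Once that is in place, every remaining step is a routine duplicate of the proof of Theorem~\ref{thm::abnormal_decay_upper}, and the restriction $\alpha\ge\alpha_{min}$ needs no separate handling because for $\alpha<\alpha_{min}$ Theorem~\ref{thm::abnormal_decay_upper} already gives $\Theta(\alpha)=\emptyset$.
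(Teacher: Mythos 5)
Your proposal follows the same covering-and-large-deviations scheme as the paper's own proof, including the Koebe comparison of conformal radii, the identity $S_n^z=S_n^{z(U)}$ inside a common component, and the passage to the limit via continuity of the rate functions; the only cosmetic difference is your use of a two-sided weight constraint where the paper splits $\Phi_\nu$ into $\Phi_\nu^-$ and $\Phi_\nu^+$ and treats each one-sidedly, which is harmless since $\Lambda_\nu^*$ is monotone on each side of $\bar m$.

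One caveat worth flagging: the justification you offer for the factorization $\Lambda_\kappa(\lambda)+\Lambda_\nu(\mu)$ — namely interpreting $h_t(z)$ as a compensated integral against a Poisson random measure of intensity $\mu_{\U,\kappa}(d\gamma)\otimes\nu(d\sigma)$ — is not the right picture. Under that intensity every atom carries \emph{both} a positive capacity $\capa(\gamma)$ and a weight $\sigma$, so $X^z$ and $S^z$ would jump simultaneously and could not be independent L\'evy processes; moreover the putative joint exponent $\iint\bigl(e^{\lambda\capa(\gamma)+\mu\sigma}-1\bigr)\mu_{\U,\kappa}(d\gamma)\,\nu(d\sigma)$ neither factors as a sum nor converges, since both marginal measures have infinite total mass. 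The independence that actually underlies the joint Cram\'er bound (which the paper invokes without comment either) is that, conditionally on $\Gamma$ and its branching structure, the weight accumulation along the branch to any fixed $z$ is a subordinator with L\'evy measure $\nu$ whose law does not depend on $\Gamma$; hence $S_n^z$ is marginally independent of $\Gamma$, and in particular of $X_n^z$. With that correction your argument is sound and coincides with the paper's.
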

\begin{proof}
Define 
\[\Phi_{\nu}^-(x)=\left\{z\in\U: \limsup_{n\to \infty}\frac{h_n(z)}{n}\le x-\bar{m}\right\},\quad \Phi_{\nu}^+(x)=\left\{z\in\U: \liminf_{n\to \infty}\frac{h_n(z)}{n}\ge x-\bar{m}\right\}.\]
Assume that $\alpha, x$ are chosen so that $2-\Lambda_{\kappa}^*(\alpha)/\alpha-\Lambda_{\nu}^*(x)/\alpha\ge 0$. It suffices to show that, almost surely,
\begin{align}
\dim(\Theta(\alpha)\cap \Phi_{\nu}^-(x))\le 2-\Lambda_{\kappa}^*(\alpha)/\alpha-\Lambda_{\nu}^*(x)/\alpha, \quad \text{for }x<\bar{m};\label{eqn::extreme_minus_minus}\\
\dim(\Theta(\alpha)\cap \Phi_{\nu}^+(x))\le 2-\Lambda_{\kappa}^*(\alpha)/\alpha-\Lambda_{\nu}^*(x)/\alpha, \quad \text{for }x>\bar{m};\label{eqn::extreme_minus_plus}
\end{align}
We will show (\ref{eqn::extreme_minus_minus}) and the bound in (\ref{eqn::extreme_minus_plus}) can be proved similarly.

We assume the same notation as in the proof of Theorem \ref{thm::abnormal_decay_upper}. Define 
\[\LU_n(u^-, u^+)=\left\{U\in\LU_n: u^-\le \frac{-\log \CR\left(z(U); D_n^{z(U)}\right)}{n}\le u^+\right\},\quad \LV_n^-(v)=\left\{U\in\LU_n: \frac{h_n(z(U))}{n}\le v-\bar{m}\right\}.\]

For $u^-<u^+$ and $v<\bar{m}$, by Cram\'er's Theorem, we have
\[\PP\left[U\in \LU_n(u^-, u^+)\cap \LV_n^-(v)\right]\le \exp\left(-n\left(\inf_{u^-\le u\le u^+}\Lambda_{\kappa}^*(u)+\Lambda_{\nu}^*(v)+o(1)\right)\right),\]
where the $o(1)$ term tends to zero as $n\to \infty$ uniformly in $U$. Define 
\[\LC_m^-(u^-, u^+, v)=\cup_{n\ge m}\LU_n(u^-, u^+)\cap \LV_n^-(v).\]
Pick $\alpha^-<\tilde{\alpha}^-<\alpha<\tilde{\alpha}^+<\alpha^+<\beta$ and $x<x'<\bar{m}$. 
We claim that $\LC_m^-(\alpha^-, \alpha^+, x')$ is a cover for $\Theta(\alpha)\cap \Phi_{\nu}^-(x)\cap B(0,1-\delta)$.
For any $z\in \Theta(\alpha)\cap \Phi_{\nu}^-(x)\cap B(0,1-\delta)$, since 
\[\lim_{n\to \infty}\frac{-\log\CR(z; D_n^z)}{n}=\alpha,\quad \limsup_{n\to \infty}\frac{h_n(z)}{n}\le x-\bar{m},\]
we know that, for $n$ large enough,
\[\exp(-n\tilde{\alpha}^-)\ge\CR(z; D_n^z)\ge \exp(-n\tilde{\alpha}^+),\quad h_n(z)\le n(x'-\bar{m}).\]
Let $w$ be the point in $e^{-n\beta}\Z^2$ that is closest to $z$ and denote by $U\in\LU_n$ the ball with center $w$, then $w$ is contained in $D^z_n$. Moreover, 
\[\exp(-n\alpha^-)\ge \CR(w;D^w_n)\ge \exp(-n\alpha^+),\quad h_n(w)=h_n(z)\le n (x'-\bar{m}).\]
Thus $z\in U\in \LC_m^-(\alpha^-, \alpha^+, x')$. This implies that $\LC_m^-(\alpha^-, \alpha^+, x')$ is a cover for $\Theta(\alpha)\cap \Phi_{\nu}^-(x)\cap B(0,1-\delta)$. We use these covers to bound $s$-Hausdorff measure of $\Theta(\alpha)\cap \Phi_{\nu}^-(x)\cap B(0,1-\delta)$. 
\[\E[\LH_s(\Theta(\alpha)\cap \Phi_{\nu}^-(x)\cap B(0,1-\delta))]\le \sum_{n\ge m}\exp\left(n\left(2\beta-s\beta-\inf_{\alpha^-\le u\le \alpha^+}\Lambda_{\kappa}^*(u)-\Lambda_{\nu}^*(x')+o(1)\right)\right).\] This implies that 
\[2-\inf_{\alpha^-\le u\le \alpha^+}\Lambda_{\kappa}^*(u)/\beta-\Lambda_{\nu}^*(x')/\beta\ge \dim(\Theta(\alpha)\cap \Phi_{\nu}^-(x)), \quad \text{almost surely}.\]
This holds for any $\beta>\alpha^+>\alpha>\alpha^-$ and $x'>x$, thus by the continuity of $\Lambda_{\kappa}^*$ and $\Lambda_{\nu}^*$, we have 
\[2-\Lambda_{\kappa}^*(\alpha)/\alpha-\Lambda_{\nu}^*(x)/\alpha\ge \dim(\Theta(\alpha)\cap \Phi_{\nu}^-(x)), \quad \text{almost surely}.\]
This completes the proof for (\ref{eqn::extreme_minus_minus}).
\end{proof}

\section{Open questions}\label{s.open}

Even though we know that the Hausdorff dimension of the closure of $\cup_{t\ge 0} \partial D^0_t$ is $1+\kappa/8$, the dimension of a single $\partial D^0_t$ could be smaller; intuitively, this happens if the growing arcs form bottlenecks, producing shortcuts in $\partial D^0_t$. However, we do not expect this to happen, which might be possible to prove by arguments similar to the proofs of Lemma~\ref{lem::key_lemma} and Theorem~\ref{thm::growthprocess_dimension}:

\begin{question}[Dimension of the boundary]
Is the Hausdorff dimension of the closure of $\partial D^0_t$ almost surely $1+\kappa/8$?
\end{question}

%

One can view $\partial D^0_t$ as a Markov process on loops surrounding the origin. What is its stationary measure?

\begin{question}[Stationary loop]
Consider the rescaled loop $L^0_t:=\exp(t\Lambda'_\kappa(0))\, \partial D^0_t$ around the origin. Show that it has a limiting distribution as $t\to\infty$, and identify this law.
\end{question}

Finally, possibly the most interesting question:

\begin{question}[Discrete models] Identify the growth process $\CGE_\kappa$ for some values of $\kappa$ as the scaling limit of some discrete models.
\end{question}

\bibliographystyle{alpha}
\bibliography{hao_wu_thesis,HastingsLevitov}

\bigbreak 
\noindent {\bf G\'abor Pete}\\
R\'enyi Institute, Hungarian Academy of Sciences, Budapest,\\
and Institute of Mathematics, Budapest University of Technology and Economics\\
\url{http://www.math.bme.hu/~gabor}\\

\medskip
\noindent {\bf Hao Wu}\\
\noindent NCCR/SwissMAP, Section de Math\'{e}matiques\\
\noindent Universit\'{e} de Gen\`{e}ve\\
\noindent Switzerland\\
\noindent {\tt hao.wu.proba@gmail.com}

\end{document}